\documentclass[12pt]{amsart}
\usepackage[margin=1.0in]{geometry}
\usepackage{caption}
\usepackage{graphicx,longtable,wrapfig,xfrac,enumitem,rotating,extpfeil,amsmath,tikz,tikz-cd,amssymb,mathrsfs,float,array,mathtools,bm,amsthm,hyperref,booktabs,makecell,tkz-euclide,glossaries,xcolor}

\DeclareMathOperator{\Id}{Id}

\DeclareMathOperator{\Sq}{Sq}
\DeclareMathOperator{\sq}{sq}
\DeclareMathOperator{\order}{order}
\DeclareMathOperator{\coboundary}{coboundary}

\DeclareMathOperator{\out}{out}
\DeclareMathOperator{\In}{in}

\DeclareMathOperator{\inj}{inj}

\DeclareMathOperator{\rank}{rank}

\DeclareMathOperator{\Tot}{Tot}

\hypersetup{
    colorlinks=true,
    linkcolor=blue,
    filecolor=magenta,
    urlcolor=cyan,
    pdftitle={Overleaf Example}
  }
\newtheorem{theorem}{Theorem}[section]
\newtheorem{proposition}[theorem]{Proposition}
\newtheorem{corollary}[theorem]{Corollary}
\newtheorem{lemma}[theorem]{Lemma}
\theoremstyle{definition}

\newtheorem{definition}[theorem]{Definition}
\newtheorem{convention}[theorem]{Convention}
\newtheorem{notation}[theorem]{Notation}

\newtheorem*{notation*}{Notation}
\theoremstyle{remark}    

\newtheorem{remark}{Remark}[theorem]
\newcommand{\bigslant}[2]{{\raisebox{.2em}{$#1$}\left/\raisebox{-.2em}{$#2$}\right.}}

\makeatletter
\newcommand{\pushright}[1]{\ifmeasuring@#1\else\omit\hfill$\displaystyle#1$\fi\ignorespaces}
\makeatother
\DeclareMathOperator{\Ima}{Im}
\graphicspath{{images/inkscapeimages}}

\newcounter{line}

\newcommand\xline[1]{%
\stepcounter{line}%
\mathrel{\begin{tikzpicture}[baseline= {( $ (current bounding box.south) + (0,-0.5ex) $ )}]
\node[inner sep=.5ex] (\theline) {$\scriptstyle #1$};
\path[draw,-,decorate,
  decoration={segment length=1.2mm,pre=lineto,pre length=4pt}] 
    (\theline.south east) -- (\theline.south west);
\end{tikzpicture}}%
}

\newcommand{\hgline}[3][solid]{
\pgfmathsetmacro{\thetaone}{#2}
\pgfmathsetmacro{\thetatwo}{#3}
\pgfmathsetmacro{\theta}{(\thetaone+\thetatwo)/2}
\pgfmathsetmacro{\phi}{abs(\thetaone-\thetatwo)/2}
\pgfmathsetmacro{\close}{less(abs(\phi-90),0.0001)}
\ifdim \close pt = 1pt
    \draw[blue, #1] (\theta+90:1) -- (\theta-90:1);
\else
    \pgfmathsetmacro{\R}{tan(\phi)}
    \pgfmathsetmacro{\distance}{sqrt(1+\R^2)}
    \draw[blue, #1] (\theta:\distance) circle (\R);
\fi
}

\newcommand{\hpline}[3][]{
  \draw (#3,0) arc(0:180:#3/2-#2/2) [blue] node [pos=0.5,above] {#1}  -- cycle;
}

\newcommand{\hplinesty}[3][solid]{
  \draw (#3,0) arc(0:180:#3/2-#2/2) [blue, #1] -- cycle;
}

\makeglossaries

\newglossaryentry{s p q}{
  name= $s{=} p\circ q$,
  description={$(p,q) \in \partial_W^{n-|V|}\circ \partial_V^n$ is identified with $s$ under $X(\partial_W^{n-|V|}\circ \partial_V^n)$}
}

\newglossaryentry{U}
{
        name=$\partial_U(S{,} T)$,
        description={The set of span elements in $\partial^n_U$ with source in $S$ and target in $T$}
      }
      \newglossaryentry{m}
{
        name=$m_{ab}(z{,}\alpha)$,
        description={$\partial_{ab}(z,\alpha)$}
      }
            \newglossaryentry{source}
{
        name=$s_{\In}$,
        description={The source of $s$}
      }
                  \newglossaryentry{target}
{
        name=$s_{\out}$,
        description={The target of $s$}
      }
      \newglossaryentry{mod coboundary}
{
        name=$\mod\coboundary$,
        description={plus a function $f(z)$, which is a coboundary}
      }
      \newglossaryentry{ab}
{
        name=$a_b$,
        description={$a$ if $b>a$, $a-1$ if $b<a$}
      }
      \newglossaryentry{1P}
{
        name=$1|_P$,
        description={$1$ if $P$ is true. $0$ otherwise}
      }
      \newglossaryentry{mab}
{
        name=$m_{ab}$,
        description={$m_{ab}(z,\alpha)$, if $z$ and $\alpha$ are apparent by context}
      }
      \newglossaryentry{partial c}
{
        name=$\partial_c$,
        description={$\partial_c(z,X_{n+1})$}
}
 \newglossaryentry{partial ab}
{
        name=$\partial_{ab}$,
        description={$\partial_c(z,\alpha)$}
}

\title{A Steenrod square on Khovanov homology and a cup-i product}
\author{Advika Rajapakse}
\address{Department of Mathematics\\University of California\\Los Angeles, CA 90095}
\email{advika@math.ucla.edu}
\thanks{AR was supported by NSF Grant DMS-2136090}

\begin{document}
\maketitle
\begin{abstract}
  Lipshitz-Sarkar defined a stable homotopy type refining Khovanov homology, producing cohomology operations $\Sq^i$ on the Khovanov homology $Kh(L)$ of a link $L$. Later, Mor\'an proposed a sequence of cup-i products on the $\mathbb{F}_2$-coefficient cochain complex of any augmented semi-simplicial object in the Burnside category. Applied to the Khovanov functor, he obtained another sequence of operations $\mathfrak{sq}^n$ on $Kh(L)$, where $\mathfrak{sq}^0$, $\mathfrak{sq}^1$ agree with the usual Steenrod squares. We prove that $\Sq^2$, the first Steenrod operation that cannot be computed from merely homological data, agrees with Mor\'an's $\mathfrak{sq}^2$.
\end{abstract}
\tableofcontents
\section{Introduction}

\subsection{Khovanov homology and Khovanov stable homotopy}
In \cite{MR1740682}, Khovanov categorified the Jones polynomial. That is, for each oriented link diagram $L$, he assigned a bigraded abelian group $Kh^{i,j}(L)$ whose graded Euler characteristic is the (unnormalized) Jones polynomial:
\[
  \chi(Kh^{i,j}(L)) = \sum_{i,j}(-1)^iq^j\rank Kh^{i,j}(L) = (q+q^{-1})V(L).
\]
Furthermore, $Kh^{i,j}(L)$, which is called the \textit{Khovanov homology} of $L$, is an invariant of the underlying link. The discovery of Khovanov homology has led to many striking applications in low-dimensional topology (see \cite{MR2186113,MR2250492,MR2729272,MR4076631}).\par
In \cite{MR3230817}, Lipshitz-Sarkar gave a space-level refinement of Khovanov homology, constructing a space $\mathcal{X}(L)$ whose stable homotopy type is a link invariant, and whose reduced cohomology is $Kh^{i,j}(L)$. This spatial lift has also lead to many topolological applications (see \cite{MR4777698,MR4772951,MR4889247}). For example, this spatial lift allows us to define cohomology operations on Khovanov homology, coming from Steenrod squares $\Sq^i$. The first couple are determined by the identities $\Sq^0 = \Id$, and $\Sq^1 = \beta$, the Bockstein. The next operation, $\Sq^2$, is trickier to compute, since $\Sq^2$ cannot simply be determined by chain-complex-level-data. Nonetheless, Lipshitz-Sarkar \cite{MR3252965} gave a computable formula for $\Sq^2$, giving rise to new computable concordance invariants \cite{MR3189434}.
\subsection{Higher Steenrod squares}
In \cite{MR4153651}, Lawson-Lipshitz-Sarkar give several reformulations of the Khovanov spectrum $\mathcal{X}(L)$, in terms of a (strictly unitary, lax) $2$-functor $F_{Kh}(L):\underline{2}^n\to\mathcal{B}$ from the cube category to the Burnside category. In \cite{MR3611723}, they ask if, just like for $\Sq^1$ and $\Sq^2$, there is a way to compute higher Steenrod squares using purely the data of $F_{Kh}(L)$.\par
In a potential answer to this question, Mor\'an \cite{MoranHigherSquares} defined a sequence of cohomology operations
\[
  \mathfrak{sq}^n: Kh^{i,j}(L;\mathbb{F}_2)\to Kh^{i+n,j}(L;\mathbb{F}_2), \qquad n\geq 0.
\]
These operations are link invariants that indeed only depend on $F$. To define $\mathfrak{sq}^n$, Mor\'an associates to a $2$-functor $F:\underline{2}^N\to\mathcal{B}$, an \textit{augmented semi-simplicial object} $X_\bullet = \Lambda(F)$ in the Burnside category, which has a cochain complex $C^*(X_\bullet)$ satisfying
\[
  \Sigma C^*(X_\bullet,\mathbb{F}_2)\cong C^*(\Tot F;\mathbb{F}_2).
\]
The operations $\mathfrak{sq}^n$ are defined on the homology group $H^*(X_\bullet;\mathbb{F}_2)$, which, in the case $F = F_{Kh}(L;\mathbb{F}_2)$, is just (a grading shift of) $Kh(L)$.\par
So far, it was known that $\mathfrak{sq}^0$, $\mathfrak{sq}^1$ agree with $\Id$ and $\beta$, the identity and Bockstein operations respectively. \par
The following theorem and corollary confirm that $\mathfrak{sq}^2$ does indeed arise as the Steenrod square $\Sq^2$ on the spectrum $\mathcal{X}(L)$. Therefore, we add evidence to the conjecture that $\mathfrak{sq}^n$ indeed do concide with $\Sq^n$ for all $n$.
\begin{theorem}\label{main theorem}
  Let $X_\bullet = \Lambda(F)$. With the canonical identification $\Sigma C^*(X_\bullet;\mathbb{F}_2)\cong C^*(\Tot F;\mathbb{F}_2)$, the operations
  \[
    \mathfrak{sq}^2:H^{*}(X_\bullet;\mathbb{F}_2)\to H^{*+2}(X_\bullet;\mathbb{F}_2),\qquad \Sq^2:H^*(\Tot F;\mathbb{F}_2)\to H^{*+2}(\Tot F;\mathbb{F}_2)
  \]
  agree.
\end{theorem}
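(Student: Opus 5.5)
The plan is to compare two explicit cochain-level formulas for the same cohomology class. On the Lipshitz--Sarkar side, I would use their combinatorial formula for $\Sq^2$ on $C^*(\Tot F;\mathbb{F}_2)$ from \cite{MR3252965}, generalized as in \cite{MR3611723} to an arbitrary strictly unitary lax $2$-functor $F:\underline{2}^n\to\mathcal{B}$. For a cocycle $c$, this formula has the shape
\[
  \Sq^2(c)=\sum_{y}\ \sum_{x,\,z}\#\bigl(\text{ladybug/boundary-matching contributions}\bigr)\cdot c(x)\,y^*,
\]
with $|y|=|x|+2$. It is computed from a chosen sign assignment, a frame assignment on the $1$-dimensional moduli spaces (a matching of the endpoints of the intervals over each $2$-face), and the resulting count of boundary points over the $3$-dimensional subcubes. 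Concretely, $\Sq^2(c)$ is $\sum_{x,y} \bigl(\#\mathcal{G}_{b}(x,y) + \#\mathcal{G}_{f}(x,y)\bigr)c(x)\,y^*$ built from $2$-dimensional framed flow categories. On Mor\'an's side, I would unwind the definition $\mathfrak{sq}^2(c)=c\smile_{k-2}c$ for $c$ of degree $k$. Here the cup-$i$ product on $C^*(X_\bullet)$, $X_\bullet=\Lambda(F)$, is a sum over pairs of face maps $\partial_W\circ\partial_V$ and span elements $s=p\circ q$, with coefficients determined by the Burnside composition data of $F$.

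The key steps, in order, are as follows.

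\begin{enumerate}
\item Make the canonical identification $\Sigma C^*(X_\bullet)\cong C^*(\Tot F)$ explicit on generators. A simplex of $\Lambda(F)$ in degree $k$ corresponds to an element of $F(v)$ with $|v|=k+1$, and the face maps $\partial_V$ correspond to the cube maps, with span elements $\partial_U(S,T)$ given by the correspondences.

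\item Write $c\smile_{k-2}c$ evaluated on a generator $y$ as a sum over chains of span elements $x\to w\to y$, or through longer composites of faces. Then use $c$ being a cocycle to discard terms, working modulo coboundaries throughout. This should reduce $\mathfrak{sq}^2$ to a sum over $2$-dimensional subcube configurations of terms $m_{ab}(z,\alpha)$ together with correction terms.

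\item Reduce the Lipshitz--Sarkar formula similarly to a sum over the same index set. I would choose the frame assignment and sign assignment compatibly with the ordering used in Mor\'an's simplicial structure, for instance the standard sign assignment and the ``right-pair'' ladybug matching. Since both operations are well defined on cohomology, any choice is allowed.

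\item Match the two sums term by term. Any residual discrepancy should be written as $\delta f$ for an explicit $f$, assembled from the functions $1|_P$ of positions $a_b$.
\end{enumerate}

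I expect the main obstacle to be step 4. The $\Sq^2$ formula involves a global choice, the matching of ladybug-type endpoints in each $2$-face, together with the quantity $\#\mathcal{G}_b$ counting boundary points over $3$-dimensional faces. Mor\'an's cup product instead encodes ordering data through the simplicial face indices. The two expressions will agree only up to a coboundary, and the hard part is to identify that coboundary explicitly.

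My first attempt would be to exploit naturality. Both operations are natural in $F$, and both are stable, so they commute with the relevant maps. I would therefore reduce to the case of a single $3$-dimensional cube, where the difference of the two cochain formulas is a cochain-level natural operation depending only on local data. I would then compute it by hand there, using the axioms of the Burnside $2$-functor, and check that it is a coboundary.

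If that fails, I would fall back on a universal-example argument. This means exhibiting both operations as natural transformations in the homotopy category of $F$-diagrams, and showing that any such degree-$2$ operation is a combination of $\Sq^2$ and $\Sq^1\Sq^1=0$. The coefficient would then be fixed by computing one example, such as a Moore-type functor realizing $\mathbb{RP}^2$-like or $\mathbb{CP}^2$-like behavior, where $\Sq^2\neq 0$.
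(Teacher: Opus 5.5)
Your steps 1--4 have the right overall shape: fix an ordering of the spans and a boundary matching, evaluate both representatives on $z\in X_{n+2}$, and show the difference is a coboundary. But step 4 is the entire content of the theorem, and neither mechanism you offer for it works.

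\textbf{The reduction to a single 3-dimensional cube.} This fails on three counts.
\begin{enumerate}
\item The cochain-level formulas depend on non-canonical choices (the orderings of the $\partial^n_U$ and the matching $\mathfrak{m}$), so they are not natural in $F$.
\item The value at $z$ involves all codimension-two faces $\partial_{ab}(z,\alpha)$ at once. These are weighted by the global face indices, through terms like $ab\cdot m_{ab}$ and $\bigl(\sum_{a<b}(a+b)m_{ab}\bigr)/2$, and are coupled through the cycles of $\Gamma(z,\alpha)$. So there is no 3-cube local model.
\item Most importantly, being a coboundary is a global property. The two cochains do not agree on the nose. You must produce a single $\omega\in C^{n+1}(X_\bullet;\mathbb{F}_2)$ with $\delta\omega=\sq^2(\alpha)-\mathfrak{sq}^2(\alpha)$ at every $z$ simultaneously, and a by-hand check on one small example cannot supply that.
\end{enumerate}

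\textbf{The universal-example fallback.} Here the gap is different. The fact that every degree-2 stable operation is a multiple of $\Sq^2$ comes from representability by $H\mathbb{F}_2$, that is, from naturality with respect to all maps of spectra. But $\mathfrak{sq}^2$ is only defined on Burnside diagrams, and is only natural for maps between such diagrams, which realize very few maps of spectra. Nothing in Mor\'an's construction gives the stronger naturality, and establishing it would already be most of the theorem.

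\textbf{What is missing} is an explicit combinatorial identity. The paper supplies it as follows.
\begin{enumerate}
\item Mor\'an's $\nabla$ is split according to whether the two face pairs share zero, one, or two indices. The last piece, $\partial_{ab}\wedge\partial_{ab}$, is evaluated via positive maximal chains. It counts pairs in $\partial_{ab}(z,\alpha)$ whose left-break and right-break orders disagree.
\item The cocycle condition ($\#\partial(y,\alpha)$ is even, hence $\sum_{b\neq a}m_{ab}\equiv 0$) kills most weighted terms.
\item Terms of the form $\sum_{s\in\partial(z)}g(s_{\out})$ are discarded as coboundaries. So are terms handled by the explicit null-homotopy $\langle H\omega,y\rangle=\sum_a\binom{a+1}{2}m_a(y,\omega)$ of the operator $\omega\mapsto\sum_{a<b}b\,m_{ab}(z,\omega)$.
\end{enumerate}

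The decisive step has no counterpart in your outline. It explains how the cycle term $\sum_{K\subset\Gamma(z,\alpha)}(1+\cdots)$ of the Lipshitz--Sarkar formula, which is not quadratic in the $m_{ab}$, arises from Mor\'an's quadratic expressions:
\begin{enumerate}
\item Draw $\partial(z,\alpha)$ as a perturbed chord diagram $\mathcal{C}(z,\alpha)$.
\item Then $\sum_{a<b<c<d}m_{ac}m_{bd}$, plus correction terms, counts chord crossings mod $2$.
\item Distinct chord cycles $\mathcal{C}(K)$ cross an even number of times, so the count localizes to individual cycles.
\item Inside one cycle, the crossing parity equals that cycle's contribution to the Lipshitz--Sarkar formula.
\end{enumerate}
A weighted version of the same crossing count disposes of the remaining $(a+b+c+d)\,m_{ac}m_{bd}$ terms. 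Without an argument of this kind, your step 4 remains unproved.
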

\begin{corollary}\label{main corollary}
  Given an oriented link diagram $L$, and let $X_\bullet = \Lambda(F_{Kh}(L))$ denote the augmented semi-simplicial object in the Burnside category, defined by \cite{MoranHigherSquares}. Identify
  \[
    \Sigma^{-n_-+1}C^*(X_\bullet;\mathbb{F}_2) \cong \widetilde{C}^*(\mathcal{X}(L);\mathbb{F}_2)\cong \Sigma^{-n_-}C^*(\Tot F;\mathbb{F}_2)  \cong Kc(L;\mathbb{F}_2).
  \]
  The operation $\mathfrak{sq}^2:Kh^{i,j}(L;\mathbb{F}_2)\to Kh^{i,j}(L;\mathbb{F}_2)$ agrees with the second Steenrod square on the Khovanov stable homotopy type $\mathcal{X}(L)$.
\end{corollary}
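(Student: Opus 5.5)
Both operations will be computed on a single cocycle and compared at the cochain level. For $\Sq^2$ we use the Lipshitz--Sarkar formula \cite{MR3252965}, reformulated for a general strictly unitary lax $2$-functor $F:\underline{2}^N\to\mathcal{B}$ as in \cite{MR4153651}. In that formula, given a cocycle $c=\sum_x x^*\in C^*(\Tot F;\mathbb{F}_2)$, $\Sq^2(c)$ is represented by
\[
  \sum_{u\geq_2 v}\ \sum_{y\in F(v)} \bigl(\#\,\mathcal{G}_c(y)\bigr)\, y^*.
\]
Here $\mathcal{G}_c(y)$ counts, modulo $2$, the edges of a graph built from a boundary matching and a ladybug-type matching on the codimension-two faces. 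That data in turn comes from the $2$-morphisms of $F$ on $2$-dimensional faces, together with a sign-assignment/frame choice $\mathfrak{f}$ on the $1$-faces.

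For $\mathfrak{sq}^2$ we unwind Mor\'an's cup-$(i)$ products on $C^*(X_\bullet;\mathbb{F}_2)$ for $X_\bullet=\Lambda(F)$. Here $\mathfrak{sq}^2(c)=c\smile_{k-2}c$ with $k=|c|$, and the cup-$(k-2)$ product is given by a sum over pairs of span elements in the iterated face maps $\partial_W^{n-|V|}\circ\partial_V^n$. Through $\Lambda$, the simplices of $X_\bullet$ in degree $n$ correspond to cube vertices of weight $n+1$, and the face maps correspond to the span correspondences of $F$. The first step is therefore to write $c\smile_{k-2}c$ explicitly as a sum over elements $y$ two levels below the support of $c$. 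The coefficient of $y^*$ should be a count of pairs $(s,s')$ of composite spans $s=p\circ q$ that pass through two intermediate vertices and land on $x,x'$ in the support of $c$, weighted by the combinatorics of which face indices are involved.

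The second step is to compare these coefficients term by term with $\#\mathcal{G}_c(y)$. For a fixed $y\in F(v)$ and $u\geq_2 v$, the span elements from $y$ up to $F(u)$ split into the two routes of the square. The Lipshitz--Sarkar count consists of a pairing between these routes, a "frame" term depending on the sign assignment, and a cross term between different $u$'s. We expect the cup-$i$ coefficient to decompose in the same way: pairs with the same top vertex should reproduce the matching term, and pairs with different top vertices should reproduce the cross term between distinct faces. Both counts are only well defined up to the cochain-level choices (matchings, sign assignment), so equality will be proven modulo a coboundary. The strategy is to exhibit an explicit cochain $h$ with
\[
  c\smile_{k-2}c+\mathrm{sq}_{LS}(c)=\delta h ,
\]
using the cocycle condition $\delta c=0$ to cancel mismatched terms.

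The main obstacle is this second step: showing that the $2$-morphism data of $F$ enters both formulas identically. In Lipshitz--Sarkar the $2$-morphisms appear through the ladybug matching. In Mor\'an's construction they appear only implicitly, through the identifications $X(\partial_W\circ\partial_V)$ of composite face maps. Our first approach is to reduce to a single $2$-dimensional face, i.e.\ $N=2$, by a locality argument: both operations are sums of contributions indexed by pairs $u\geq_2 v$, up to coboundary terms. We would then verify agreement directly for $N=2$ by an explicit computation on a square of spans. If locality fails because cross terms between distinct faces don't match, the fallback is to use naturality of both operations under maps of $2$-functors and check the universal cases. Theorem~\ref{main theorem} then gives Corollary~\ref{main corollary} by taking $F=F_{Kh}(L)$ and composing the stated identifications, since the suspension shifts commute with $\Sq^2$.
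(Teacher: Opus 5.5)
Your last sentence is the whole proof of the corollary, and it is exactly the paper's argument. Corollary~\ref{main corollary} is Theorem~\ref{main theorem} applied to $F=F_{Kh}(L)$; the stated identifications, together with stability of $\Sq^2$ under the shift $\mathcal{X}(L)\simeq\Sigma^{-n_-}|F_{Kh}(L)|$, carry the statement over to $Kh(L;\mathbb{F}_2)$. Everything before that sentence tries to re-derive Theorem~\ref{main theorem}, which you may simply cite here.

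If you meant that sketch as a proof of the theorem itself, the locality step is a genuine gap: neither formula is a sum of contributions from individual $2$-faces.
\begin{itemize}
\item For $z\in X_{n+2}$, Mor\'an's $\langle\mathfrak{sq}^2(\alpha),z\rangle$ contains cross-face products such as $\sum_{a<b<c<d}m_{ac}m_{bd}$ and $\sum_{s<t\in\partial_c}m_{a_c}(s_{\out})m_{b_c}(t_{\out})$.
\item $\sq^2(\alpha)$ contains the global term $\bigl(\sum_{a<b}(a+b)m_{ab}\bigr)/2$ and a count of cycles $K\subset\Gamma(z,\alpha)$, which pass through many faces. Note also that $\sq^2$ counts cycles plus frame terms, not edges.
\end{itemize}
A check with $N=2$, where there is only one $2$-face, sees none of these terms. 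You give no mechanism for localizing them modulo coboundaries, and the naturality fallback has no universal case to test.

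In the paper these terms are matched only globally. The cross-face terms, together with the pairs in $\partial_{ab}$ whose left-break and right-break orders agree, are identified with the cycle terms by counting crossings in the chord presentation $\mathcal{C}(z,\alpha)$ (Proposition~\ref{cross counting}). That count uses the cocycle condition and the fact that two closed chord cycles meet an even number of times. The remaining weighted terms are disposed of via Proposition~\ref{weighted cross counting} and coboundaries built from homotopies like the one in Proposition~\ref{a sum}.

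The obstacle you single out is not the hard part. The $2$-morphisms of $F$ enter both formulas through the same identifications $\partial_{ab}\cong\partial_{a_b}\circ\partial_b\cong\partial_{b_a}\circ\partial_a$. The real work is the global combinatorial identity modulo coboundaries.
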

Mor\'an defines the maps $\mathfrak{sq}^i$ using a combinatorially defined family of operations
\[
  \smile_i: C^p(X_\bullet;\mathbb{F}_2)\otimes C^q(X_\bullet;\mathbb{F}_2)\to C^{p+q-i}(X_\bullet;\mathbb{F}_2),\qquad i\in \mathbb{Z},
\]
analogous to the cup-i products defined in \cite{MR22071,MR4473678}. Continuing the analogy, he defines $\mathfrak{sq}^i$ by $\mathfrak{sq}^i([\alpha]) = [\alpha\smile_{n-i}\alpha]$ for $\alpha$ any $n$-cocycle. With this construction in mind, we view $\mathfrak{sq}^2$, for the rest of this paper, as an operation from cocycles to cocycles, stating, by abuse of notation, $\mathfrak{sq}^2(\alpha) = \alpha\smile_{n-i}\alpha$. \par
On the other hand, Lipshitz-Sarkar's \cite{MR3230817} derivation of $\sq^2(\alpha)$ comes from a more gometric strategy, using Brown representability to compute $\sq^2(\alpha)$ as a pullback of a cocycle in the Eilenberg-Maclane space $K(\mathbb{Z}/2,n+2)$.
\subsection{Outline of the proof}
Section \ref{background} will cover necessary background and definitions of both formulas $\sq^2$, $\mathfrak{sq}^2$ (which both depend on a few choices). In Section \ref{simplifying} we will simplify Moran's $\mathfrak{sq}^2$ formula as much as we can. Section \ref{difference} will then simplify the difference of the two formulas $\mathfrak{sq}^2(\alpha)-\sq^2(\alpha)$, eventually proving it is a coboundary. We store many of our preliminary lemmas in Appendix \ref{appendix} and \ref{correspondence ordering}, in order to highlight the main argument in Sections \ref{simplifying} and \ref{difference}.
\section{Background and definitions}\label{background}
\subsection{The cube and the Burnside category}
In this section, we will give the necessary setup for the definitions of Steenrod squares in both Mor\'an's work \cite{MoranHigherSquares}, and the work of \cite{MR3252965,rajapakse2025}. The original construction of Khovanov homology \cite{MR1740682} (see also \cite{MR1917056}), starts with an $N$-dimensional cube, where each vertex corresponds to a resolution of an $N$-crossing link diagram $L$. Over each vertex lie generators of the Khovanov chain complex $Kc(L)$, corresponding to labelings of circles, and a differential component on $Kc(L)$ is determined by an arrow traversing down an edge between two generators. Lawson-Lipsihtz-Sarkar \cite{MR3611723, MR4153651} package this data into a (strictly unitary, lax) $2$-functor $F_{Kh}(L): \underline{2}^N\to \mathcal{B}$ from the cube category to the Burnside category. Applying the totalization functor and shifting the gradings down by $N_-$, they again obtain the Khovanov chain complex $Kc(L)$. The main advantage of $F_{Kh}(L)$ is that it defines a symmetric spectrum $|F_{Kh}(L)|$ for which the stable homotopy type of $\Sigma^{-n_-}|F_{Kh}(L)|$ is a link invariant.\par
We define $\underline{2}^N$ to have objects the subsets $A$ of $\{1,\ldots, N\}$, and morphisms the inclusions of subsets. For a definition of the Burnside category $\mathcal{B}$ and a $2$-functor $F$, we refer the reader to \cite{MR4153651,MoranHigherSquares}.
  \subsection{Augmented semi-simplicial sets}
  In \cite{MoranHigherSquares}, Mor\'an packages the data of a 2-functor $F: \underline{2}^N\to \mathcal{B}$ into an \textit{augmented semi-simplicial object} in the Burnside category $\mathcal{B}$.
  \begin{definition}
    Let $\Delta_{\inj*}$ denote the category of possibly empty finite ordinals $[n]$, with only injective order-preserving maps as morphisms. An \textit{augmented semi-simplicial object} $X$ in a (weak) $2$-category $\mathscr{C}$ is a (strictly unitary, lax) $2$-functor from $\Delta_{\inj*}$ to $\mathscr{C}$.
  \end{definition}
  As is customary, we write $X = X_\bullet$, $X_n = X([n])$, and $\partial_U^n = X(\delta_U^n)$ for $U\subseteq [n]$. In practice, $\mathscr{C}$ will be the Burnside category $\mathcal{B}$, so face maps $\delta^n_i:[n-1]\to [n]$ correspond with spans $\partial^n_i:X_n\to X_{n-1}$. We state some notation that will help us define functors in $\mathcal{B}^{\Delta_{\inj*}}$.
  \begin{definition}
    Let $A = (a_0,\ldots, a_{n})$ be a subset of $\{1,\ldots, N\}$, and let $U \subseteq [n]$ be ordered as $(u_1,\ldots,u_k)$. We define $A(U) = (a_{u_1},\ldots, a_{u_k})$.
  \end{definition}
  We can view a $2$-functor $F: (\underline{2}^N)^{\text{op}}\to \mathcal{B}$ as an augmented semi-simplicial object. Indeed, consider the map
  \[
    \Lambda: \mathcal{B}^{(\underline{2}^N)^{\text{op}}}\to \mathcal{B}^{\Delta_{\inj*}}
  \]
  (which is, in fact, functorial), that takes $F$ to the left Kan extension of $F$ along the functor $(\underline{2}^N)^{\text{op}}\to \Delta_{\inj*}$, $A\mapsto [|A|-1]$. In other words, $\Lambda$ takes $F$ to an augmented semi-simplicial object $X_\bullet = \Lambda(F)$ defined by
  \begin{align*}
    X_n &= \coprod_{|A| = n+1} F(A), & n\geq -1\\
    \partial^n_U &= \coprod_{|A| = n+1} F(A\backslash A(U)\subseteq A), & U\subseteq [n]\\
    X(\partial^{n-|V|}_W,\partial^n_V) & = \coprod_{|A| = n+1} F(C\subset B,B\subset A) & V \subset [n], W \subset [n-|V|] 
  \end{align*}
  where $B = A\backslash A(V)$, $C = B\backslash B(V)$.
  \begin{notation}
    Let $(p,q)\in \partial_W^{n-|V|} \times_{X_{n-|V|}}\partial_V^n$. If $X(\partial_W^{n-|V|},\partial_V^n)$ takes $(p,q)$ to $s$, then we say, by abuse of notation, \gls{s p q}.
  \end{notation}
  
  \subsection{Mor\'an's cup-i product}\label{cup i}
  Let $C_*$ denote the chain complex $C_*(X_\bullet;\mathbb{F}_2)$, defined in \cite{MoranHigherSquares}. Mor\'an defines $\mathfrak{sq}^i([\alpha]) = [\alpha\smile_{n-i}\alpha]$, where $\smile_i: C^*\otimes C^*\to C^*$ shifts grading down by $i$ and is defined as the dual to a homomorphism
  \[
    \nabla_i: C_*\to C_*\otimes C_*.
  \] 
  Mor\'an's definition of $\nabla_i$ is analogous to definitions in \cite{MR22071,MR4473678}, where they define a similar operation on the level of simplicial sets. Let $\mathcal{P}_q(n)$ denote the set of $q$-tuples of integers $0\leq a_1\leq a_2\leq\ldots\leq a_q\leq n$ where every number appears at most twice (think of a staircase with unevenly spaced steps), and let $U^+,U^-\subseteq U$ be the subsets of $U$ of positive index elements (see \cite{MoranHigherSquares} for definitions). Mor\'an defines a coproduct of spans
  \begin{equation}\label{span union}
    \nabla^{\binom{n}{q}}=\coprod_{U\in \mathcal{P}_q(n)}\partial_{U^-}\wedge\partial_{U^+},
  \end{equation}
  where $\partial_{U^-}\wedge\partial_{U^+}$ are spans of the form
  \begin{align*}
    X_n\times X_n\longleftarrow \partial_{U^-}\wedge \partial_{U^+}\longrightarrow X_{n-|U^-|}\times X_{n-|U^+|}.
  \end{align*}
  Finally, he defines
  \[
    \nabla_i := \sum_n \mathcal{A}_{\mathbb{F}_2}\left(\nabla^{\binom{n}{n-i}}\circ \Delta_n \right),
  \]
  where $\Delta_n: X_n\to X_n\times X_n$ is the diagonal span and $\mathcal{A}_{\mathbb{F}_2}$ denotes the $\mathbb{F}_2$-linearization functor (see \cite{MoranHigherSquares}). Thus, for $\alpha$ an $n$-cocycle and $z\in X_{n+2}$ viewed as a cochain, we unwind definitions to compute
  \begin{align*}
    \langle\mathfrak{sq}^2(\alpha),z\rangle &= \langle\alpha\smile_{n-2},z\rangle = \left\langle \alpha\otimes \alpha,\mathcal{A}_{\mathbb{F}_2}\bigl(\nabla^{\binom{n+2}{4}}\circ\Delta_{n+2}\bigr)(z)\right\rangle\\
    &= \left\langle\alpha\otimes \alpha,\mathcal{A}_{\mathbb{F}_2}\bigl(\nabla^{\binom{n+2}{4}}\bigr)(z\otimes z)\right\rangle,
  \end{align*}
Finally, we observe that $\alpha\otimes\alpha\in C^n(X_\bullet;\mathbb{F}_2)\otimes C^n(X_\bullet;\mathbb{F}_2)$, which implies that we can replace $\nabla^{\binom{n+2}{4}}$ with the smaller span
  \[
    \nabla^{\binom{n+2}{4}}[2,2] = \coprod_{\substack{U\in \mathcal{P}_4(n+2)\\|U^-| = |U^+|=2}}\partial_{U^-}\wedge\partial_{U^+},
  \]
  and write
  \[
    \langle\mathfrak{sq}^2(\alpha),z\rangle  = \big\langle\alpha\otimes \alpha,\sum_{\substack{U\in \mathcal{P}_4(n+2)\\|U^-| = |U^+|=2}}\mathcal{A}_{\mathbb{F}_2}( \partial_{U^-}\wedge\partial_{U^+})(z\otimes z)\big\rangle.
  \]
\subsection{Background for the $\text{Sq}^2$ formula}
We use a convenient object to define $\sq^2(\alpha)$ called a \textit{cubical special graph structure $\Gamma(z,\alpha)$}, used in \cite{rajapakse2025}. These objects are based of the construction of a \textit{special graph structures}, first defined in \cite{MR4165986} to give a general Steenrod square formula for framed flow categories. We recall a combinatorial formula of the second Steenrod square defined in \cite{rajapakse2025}, which is equivalent to the original combinatorial formula \cite{MR3252965}. But first, we introduce some notation
\begin{definition}
  Given a span
  \begin{align*}
    X_m \longleftarrow \partial^m_U\longrightarrow X_{l},\qquad (l = m - |U|)
  \end{align*}
  and subsets $S\subset X_m$, $T\subset X_l$, we define
  \[
    S \longleftarrow\text{\gls{U}}\longrightarrow T
  \]
  to be the span consisting of elements $s\in \partial^n_U$ with source in $S$, and target in $T$. In the case where $S$ is a singleton set $\{z\}$, we write $\partial_U(\{z\},T)$ as $\partial_U(z,T)$. In practice, we often view a cocycle $\alpha\in C^{l}(X_\bullet;\mathbb{F}_2)$ as a subset $\alpha\subset X_l$, and we use the term $\partial_U(z,\alpha)$. Finally, we define $m_U(S,T)= \#\partial_U(S,T)$. Writing $\{a,b\} = ab$, we have \gls{m}$= \#(\partial_{ab}(z,\alpha))$.
\end{definition}

\begin{theorem}[\cite{MR3252965},\cite{rajapakse2025}]\label{second square identity}
Let $\alpha\in C^*(\Tot F;\mathbb{F}_2)$ be a cocycle. Any facewise boundary matching $\mathfrak{m}$ for $\alpha$ determines a unique cocycle $\sq^2(\alpha)= \sq^2_{\mathfrak{m}}(\alpha)$, defined by
  \begin{align*}
    \langle\sq^2(\alpha),z\rangle &:= \sum_{a<b}ab\cdot m_{ab}(z,\alpha) + \bigslant{\Bigl(\sum_{a<b}(a+b)m_{ab}(z,\alpha)\Bigr)}{2}\\
    &\quad+ \sum_{K\subset \Gamma(z,\alpha)} \bigl(1 + \#\{a\to \overrightarrow{b}\to c\ \vert\  a>b\} + \#\{a\to \overleftarrow{b}\to c\ \vert\  a<b\}\bigr).
  \end{align*}
  Given the canonical identification of $\widetilde{H}^*(|F|;\mathbb{F}_2)$ with $H^*(\Tot F;\mathbb{F}_2)$, we have
  \[
    \Sq^2([\alpha]) = [\sq^2(\alpha)].
  \]
\end{theorem}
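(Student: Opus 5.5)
The plan is to deduce the statement from the original combinatorial formula of \cite{MR3252965}, using the cubical special graph structure $\Gamma(z,\alpha)$ of \cite{rajapakse2025} as the bridge. I take the Lipshitz--Sarkar construction as the starting point. For a cocycle $\alpha$ representing a class in $\widetilde{H}^n(|F|;\mathbb{F}_2)\cong H^n(\Tot F;\mathbb{F}_2)$, one chooses a \emph{boundary matching} on each pair of $1$-dimensional moduli spaces (equivalently, on each $2$-dimensional face of the cube restricted to $\alpha$). One also chooses compatible framings of the resulting $1$-manifolds, and obtains a cochain whose value on $z$ counts the resulting framed $1$-manifolds with a correction term. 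Brown representability, applied to a cellular map into $K(\mathbb{Z}/2,n)$ and pulled back along $\Sq^2\colon K(\mathbb{Z}/2,n)\to K(\mathbb{Z}/2,n+2)$, identifies this cochain with a representative of $\Sq^2[\alpha]$. I would quote this as the input theorem, so that the task is to match the displayed formula term by term with the Lipshitz--Sarkar one.

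The key steps, in order, are as follows.

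\textbf{Step 1.} Fix a facewise boundary matching $\mathfrak{m}$. Build $\Gamma(z,\alpha)$ as the graph whose vertices are the elements of the $\partial_{ab}(z,\alpha)$ and whose edges come from $\mathfrak{m}$ on the $2$-faces. Check that every vertex has degree $2$, which follows from $\alpha$ being a cocycle, so that $\Gamma$ decomposes into cycles $K$.

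\textbf{Step 2.} Identify the three terms of the formula:
\begin{itemize}
\item $\sum_{a<b}ab\,m_{ab}$ and $\bigl(\sum(a+b)m_{ab}\bigr)/2$ are the framing contributions of the vertices, coming from the signs and orderings of the cube directions $a<b$ and the standard sign assignment. These match the ``$\#$ of points in each $\partial_{ab}$ weighted by the framing of the edge'' terms in \cite{MR3252965}.
\item For each cycle $K$, the quantity $1+\#\{a\to\overrightarrow{b}\to c,\ a>b\}+\#\{a\to\overleftarrow{b}\to c,\ a<b\}$ computes the framing of the corresponding circle mod $2$. It does so by counting the turns where the orientation convention of the matching disagrees with the standard framing, which is exactly the Lipshitz--Sarkar count for a framed circle.
\end{itemize}

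\textbf{Step 3.} Show that the resulting cochain is a cocycle. Show also that changing $\mathfrak{m}$ on a single face changes it by a coboundary, so that uniqueness of the class holds. Both follow once Step 2 identifies the formula with the Lipshitz--Sarkar cochain, but they can also be checked directly by a local flip computation on one face.

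The main obstacle is Step 2's cycle term: verifying that the arrow-direction counts reproduce the framing number of each circle mod $2$ for \emph{every} facewise matching, not just the ones used in \cite{MR3252965}. I would first handle a single $3$-dimensional subcube, where a cycle passes through three faces. There I would compute directly how swapping one matched pair changes both the count of reversed arrows and the Lipshitz--Sarkar framing. Showing these changes agree reduces the general case, by connectivity of the space of matchings under single-face swaps, to one reference matching where the identification is already in \cite{MR3252965,rajapakse2025}.
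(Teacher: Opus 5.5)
The paper gives no proof of this statement. It is quoted from \cite{MR3252965} in the reformulation of \cite{rajapakse2025}, so your strategy (take the Lipshitz--Sarkar theorem as input and translate it into $\Gamma(z,\alpha)$) follows the same lineage. As a self-contained argument, though, your proposal has a genuine gap: Step~2, which carries all of the content, is only asserted, and the data feeding it is misdescribed.

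A facewise boundary matching does not live on $2$-faces or on pairs of $1$-dimensional moduli spaces. It is, for each $y\in X_{n+1}$, a pairing of $\partial(y,\alpha)$ into ordered pairs $(s,t)\in\partial_a\times\partial_b$ with $a\le b$. Here $\partial(y,\alpha)$ is the $0$-dimensional moduli spaces from $y$ to $\alpha$, whose count is even precisely because $\alpha$ is a cocycle. In $\Gamma(z,\alpha)$ the chords, i.e.\ the elements of $\partial_{ab}(z,\alpha)$ (the interval moduli spaces), are joined at their ends in $\partial(t_{\out},\alpha)\times\{t\}$ for $t\in\partial_c(z)$. So a matching edge joins a chord in the face $\{a,c\}$ to one in the face $\{b,c\}$, and a single cycle runs through many $2$-faces. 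The facet cycle $a_1\to a_2\to\cdots$ and the decorations $\overrightarrow{b}$, $\overleftarrow{b}$ record exactly this.

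Your attribution of the global terms is also off. Since $\sum_{a\neq c}m_{ac}=\sum_{t\in\partial_c(z)}m(t_{\out})$, one has $\bigl(\sum_{a<b}(a+b)m_{ab}\bigr)/2=\sum_c\sum_{t\in\partial_c(z)}c\cdot m(t_{\out})/2$. This counts matching edges of $\Gamma(z,\alpha)$ lying over those $t$ with odd face index $c$. It is therefore a contribution of the matching edges, not of your vertices (the chords). By contrast, $\sum_{a<b}ab\cdot m_{ab}$ is a sum over chords.

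What Step~2 actually needs is the following.
\begin{enumerate}
\item For a general $F$, not just $F_{Kh}(L)$, you need the identification \cite{MR3611723,MR4153651} of $|F|$ with the realization of the associated cubical framed flow category. Its interval moduli spaces come from the $2$-morphisms $F(C\subset B,B\subset A)$. Without this, the Lipshitz--Sarkar formula does not apply at all.
\item You need an explicit dictionary from $\mathfrak{m}$ to a Lipshitz--Sarkar framed boundary matching. The points of $\partial_a(y,\alpha)$ carry signs determined by $a$ (the standard sign assignment), and the convention $a\le b$ must be converted into a choice of framed arc for each matched pair.
\item You need an actual computation of each cycle's framing number. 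The chord and matching-edge contributions must then be redistributed into the two global sums, possibly only modulo coboundaries such as $\sum_{a<b}b\,m_{ab}$ (Proposition~\ref{a sum}).
\end{enumerate}

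Your proposed remedy does not replace this. The Lipshitz--Sarkar theorem already holds for \emph{every} framed boundary matching, so there is no ``reference matching'' problem to propagate away from; the dictionary must be checked for each $\mathfrak{m}$ regardless. The reduction to a $3$-dimensional subcube also fails. Re-pairing $\partial(y,\alpha)$ at one $y$ reconnects cycles of $\Gamma(z,\alpha)$ for every $z$ having $y$ as a face. It merges or splits them, or reverses the traversal direction along whole segments, and those segments pass through many $2$-faces. So the change in the arrow counts is not confined to any $3$-dimensional subcube. Finally, citing \cite{rajapakse2025} for the base case is citing the statement itself.
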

A facewise boundary matching is the following data: For $y\in X_{n+1}$, we group $\partial(y,\alpha)$, into ordered pairs of the form $(s,t)$ such that if $(s,t)\in \partial_a\times \partial_b$ is an ordered pair, then $a\leq b$. We fix a facewise boundary matching $\mathfrak{m}$ for $\alpha$ using the following convention:
\begin{convention}
For each $y$, we view $\partial(y,\alpha)$ as a subset of $\partial^{n+1}$ to obtain an induced ordering $s_1<s_2<\ldots s_r$. We define $\mathfrak{m}$ to have the ordered pairs $(s_1,s_2),(s_2,s_3),\ldots, (s_{r-1},s_r)$.
\end{convention}
\subsection{Fixing an  ordering and a boundary matching}
The difficulty in proving the identity $\mathfrak{sq}^2 = \Sq^2$ is that both formulas for $\mathfrak{sq}^2$ and $\Sq^2$ are defined on cocycles $\alpha$ by computing a representative ($\alpha\smile_{n-2}\alpha$ and $\sq^2(\alpha)$ respectively), which creates some ambiguity. On one hand, $\alpha\smile_{n-2}\alpha$ depends on an ordering of all spans $\partial_U^{n}$, and on the other hand, $\sq^2(\alpha)$ depends on a facewise boundary matching for $\alpha$. We fix once and for all an ordering of spans $\partial_U^{n}$ \textit{and} a facewise boundary matching for $\alpha$.
 \begin{definition}\label{ordering convention}
  For the rest of this paper, we fix orderings on our spans $\partial^n_U$. We start by ordering spans of the form $\partial^n_a$ arbitrarily. Now we order spans of the form $\partial^n_{ab}$. For a span $\partial^n_{ab}$, we fix an ordering once and for all called the \textit{left-break} ordering. The ordering is defined as follows:
  \begin{enumerate}
  \item Given $a<b$ and two span elements $s,s'\in \partial^{n+2}_{ab}$, we write $s = p\circ q$, where $q\in \partial_a^{n+2}$, $p\in \partial_{b-1}^{n+1}$, and we write $s' = p'\circ q'$, where $q'\in \partial_a^{n+2}$, $p'\in \partial_{b-1}^{n+1}$.
    \item We use the lexicographic order on  $\partial_a^{n+2}\times \partial_{b-1}^{n+1}$, and define $s<s'$ if and only if $(q,p)<(q',p')$ with respect to this lexicographic ordering.
    \end{enumerate}
    Generalizing the ordering on each $\partial^{n}_a$, we adopt an ordering on $\partial^{n} := \bigsqcup_a\partial^{n}_a$ to be the unique ordering that extends the ordering on each $\partial^{n}_a$ such that $\partial_0^{n}<\partial_1^{n}<\ldots$. In other words, we declare $s<t$ if $s\in \partial^{n}_a$, $t\in \partial^{n}_b$, $a<b$.\par
    We define the \textit{right-break} ordering on $\partial^{n+2}_{ab}$ similarly, but we  write $s = p'\circ q'$ for $q'\in \partial_b^{n+2}$, $p'\in \partial_a^{n+1}$ and ordering lexicographically.
    \end{definition}
    
\begin{remark}
  The term \textit{left-break} ordering comes from viewing a span element $s\in \partial^n_{ab}$ as an interval $\mathcal{I}$ in the moduli space $\mathfrak{f}^{-1}\mathcal{M}(w,u)$ (see \cite{MR3230817, MR4153651} for equivalent definitions). Writing $s = p\circ q$ corresponds to finding the ``leftmost'' end of $\mathcal{I}$.
  \end{remark}
\section{Simplifying the expression $\langle\mathfrak{sq}^2(\alpha),z \rangle$}\label{simplifying}
Let $\alpha$ be an $n$-cocycle. In this section, we try to simplify $\langle\mathfrak{sq}^2(\alpha),z \rangle$  as much as possible before comparing it with $\langle\sq^2(\alpha),z\rangle$. Recall from Section \ref{cup i} our formula
\begin{equation*}
    \langle\mathfrak{sq}^2(\alpha),z\rangle  = \big\langle\alpha\otimes \alpha,\sum_{\substack{U\in \mathcal{P}_4(n+2)\\|U^-| = |U^+|=2}}\mathcal{A}_{\mathbb{F}_2}( \partial_{U^-}\wedge\partial_{U^+})(z\otimes z)\big\rangle.
  \end{equation*}
  Mor\'an's construction of $\partial_{U^-}\wedge\partial_{U^+}$ is slightly different based on whether $n$ is even or odd. For ease of reading, we follow the computation in the case that $n$ is even. The computation for $n$ odd follows similarly and results in an almost identical formula. We now expand the interior term
  \begin{equation}\label{interior term}
    \sum_{\substack{U\in \mathcal{P}_4(n+2)\\|U^-| = |U^+|=2}}\mathcal{A}_{\mathbb{F}_2}( \partial_{U^-}\wedge\partial_{U^+}),
  \end{equation}
  and in the following equalities, we drop the ``$\mathcal{A}_{\mathbb{F}_2}$'' term to lighten the notation, leaving it implicit. The term (\ref{interior term}) can be written as
\begingroup
\allowdisplaybreaks
\begin{align*}
  &\sum_{\substack{U\in \mathcal{P}^0_4(n+2)\\|U^+| = |U^-| = 2}}\partial_{U^-}\wedge\partial_{U^+}+ \sum_{\substack{U\in \mathcal{P}^1_4(n+2)\\|U^+| = |U^-| = 2}}\partial_{U^-}\wedge\partial_{U^+} + \sum_{\substack{U\in\mathcal{P}^2_4(n+2)\\|U^+| = |U^-| = 2}}\partial_{U^-}\wedge\partial_{U^+}\\
  &=\Biggl(\sum_{\substack{a<b<c<d\\\text{$a,b$ even}\\\text{$c,d$ odd}}}\partial_{ad}\wedge\partial_{bc} + \sum_{\substack{a<b<c<d\\\text{$c,d$ even}\\\text{$a,b$ odd}}}\partial_{bc}\wedge\partial_{ad} + \sum_{\substack{a<b<c<d\\\text{$a,d$ even}\\\text{$b,c$ odd}}}\partial_{ab}\wedge\partial_{cd} + \sum_{\substack{a<b<c<d\\\text{$b,c$ even}\\\text{$a,d$ odd}}}\partial_{cd}\wedge\partial_{ab}\\
  &\quad + \sum_{\substack{a<b<c<d\\\text{$a,b,c,d$ odd}}}\partial_{bd}\wedge\partial_{ac} + \sum_{\substack{a<b<c<d\\\text{$a,b,c,d$ odd}}}\partial_{ac}\wedge\partial_{bd}\Biggr)\\
  &\quad+ \Biggl(\sum_{\substack{a<c<b\\\text{$a$ even}\\\text{$b$ odd}}}\partial_{ac}\wedge\partial_{cb} + \sum_{\substack{a<c<b\\\text{$a$ odd}\\\text{$b$ even}}}\partial_{cb}\wedge\partial_{ac} + \sum_{\substack{c<a<b\\\text{$a$, $b$ even}}}\partial_{ca}\wedge\partial_{cb} + \sum_{\substack{c<a<b\\\text{$a$, $b$ odd}}}\partial_{ca}\wedge\partial_{cb}\\
  &\quad+ \sum_{\substack{a<b<c\\\text{$a$, $b$ even}}}\partial_{ac}\wedge\partial_{bc} + \sum_{\substack{a<b<c\\\text{$a$, $b$ odd}}}\partial_{bc}\wedge\partial_{ac}\Biggr) + \sum_{a<b} \partial_{ab}\wedge\partial_{ab}\\
  &= I + II + III
\end{align*}
\endgroup
\begin{notation}
  When $z\in X_{n+2}$ is apparent by context, we will abbreviate $\partial_c(z,X_{n+1})$ as \gls{partial c}, $\partial_{ab}(z,\alpha)$ as \gls{partial ab}, and $m_{ab}(z,\alpha)$ by \gls{mab}. Furthermore, when $y\in X_{n+1}$, we write $m_a(y,\alpha)$ as $m_a(y)$.
\end{notation}
We write
\begin{align*}
  \langle\mathfrak{sq}^2(\alpha),z\rangle
  &= \langle\alpha\otimes\alpha, (I + II + III) (z\otimes z)\rangle\\
  &= \langle\alpha\otimes\alpha, I (z\otimes z)\rangle + \langle\alpha\otimes\alpha, II (z\otimes z)\rangle + \langle\alpha\otimes\alpha, III (z\otimes z)\rangle\\
  &= \mathbf{I} + \mathbf{II} + \mathbf{III},
\end{align*}
To compute $\mathbf{I}$, note that
\begin{align*}
  I&=\sum_{\substack{a<b<c<d\\\text{$a,b$ even}\\\text{$c,d$ odd}}}\partial_{ad}\times\partial_{bc} + \sum_{\substack{a<b<c<d\\\text{$c,d$ even}\\\text{$a,b$ odd}}}\partial_{bc}\times\partial_{ad} + \sum_{\substack{a<b<c<d\\\text{$a,d$ even}\\\text{$b,c$ odd}}}\partial_{ab}\times\partial_{cd} + \sum_{\substack{a<b<c<d\\\text{$b,c$ even}\\\text{$a,d$ odd}}}\partial_{cd}\times\partial_{ab}\\
  &\quad + \sum_{\substack{a<b<c<d\\\text{$a,b,c,d$ odd}}}\partial_{bd}\times\partial_{ac} + \sum_{\substack{a<b<c<d\\\text{$a,b,c,d$ odd}}}\partial_{ac}\times\partial_{bd}
\end{align*}
using the fact that $\partial_{ab}\wedge\partial_{cd} = \partial_{ab}\times\partial_{cd}$ if $a,b,c,d$ are all distinct. Therefore.
\begin{align*}
  \mathbf{I} &= \sum_{\substack{a<b<c<d\\a,b \text{ even}\\c,d\text{ odd}}}m_{ad}m_{bc} + \sum_{\substack{a<b<c<d\\c,d \text{ even}\\a,b\text{ odd}}}m_{bc}m_{ad} + \sum_{\substack{a<b<c<d\\a,d \text{ even}\\b,c\text{ odd}}}m_{ab}m_{cd} + \sum_{\substack{a<b<c<d\\b,c \text{ even}\\a,d\text{ odd}}}m_{cd}m_{ab}\\
             &\quad+ \sum_{\substack{a<b<c<d\\a,b,c,d \text{ odd}}}m_{bd}m_{ac}+ \sum_{\substack{a<b<c<d\\a,b,c,d \text{ even}}}m_{ac}m_{bd},
\end{align*}
To compute $\mathbf{II}$, we use the following notation to rewrite $II$.
\begin{notation}
  Given integers $a,b$ such that $a\neq b$, define
  \[
    \text{\gls{ab}} =
    \begin{cases*}
      a & if $a<b$\\
      a-1 & if $a>b$
    \end{cases*}
  \]
  We only use the letters $a,b,c,d$ in this notation throughout the paper. The convenience of this definition is seen in by the isomorphism of spans
  \[
    \partial^{n-1}_{a_b}\circ\partial_b^{n}\cong \partial_{ab}^{n}\cong  \partial^{n-1}_{b_a}\circ\partial_a^{n}.
  \]
\end{notation}
Now we can rewrite $II$ as
\begin{equation*}
  II = \sum_c\Bigl(\sum_{\substack{a,b\neq c\\a<b\\\text{$a_c,b_c$ even,}}} \partial_{ac}\wedge\partial_{bc} + \sum_{\substack{a,b\neq c\\a<b\\\text{$a_c,b_c$ odd,}}} \partial_{bc}\wedge\partial_{ac}\Bigr).
\end{equation*}
We observe from \cite{MoranHigherSquares} that if $a,b,c$ are disjoint, then $\partial_{ac}\wedge\partial_{bc}$, can be viewed as a subset of $\partial_{ac}\times\partial_{bc}$, consisting of exactly the pairs $(s_0,t_0)$ such that if  $s_0= p\circ s\in \partial_{a_c}\circ\partial_c$, $t_0= q\circ t\in \partial_{b_c}\circ\partial_c$, then $s<t$. We write $\mathbf{II}$ after introducing the following notation.
\begin{definition}
  Let $A\longleftarrow E\longrightarrow B$ be a span, and let $s\in E$. We denote \gls{source}$\in A$, \gls{target}$\in B$ as the source and target elements of $s$.
\end{definition}
\begin{align*} 
  \mathbf{II} &=\sum_{\substack{s,t\in \partial_c\\s<t}}\Biggl(\sum_{\substack{a,b\neq c\\a<b\\\text{$a_c,b_c$ even}}}m_{a_c}(s_{\out})m_{b_c}(t_{\out}) +\sum_{\substack{a,b\neq c\\a>b\\\text{$a,b$ odd}}}m_{a_c}(s_{\out})m_{b_c}(t_{\out})\Biggr)\\
  &=\sum_{\substack{s,t\in \partial_c\\s<t}}\Biggl(\sum_{\substack{a<b\\\text{$a,b$ even}}}m_a(s_{\out})m_b(t_{\out}) +\sum_{\substack{a>b\\\text{$a,b$ odd}}}m_a(s_{\out})m_b(t_{\out})\Biggr).
\end{align*}
Referring directly to the definition of $\partial_{ab}\wedge\partial_{ab}$ in \cite{MoranHigherSquares}, we find
\begin{equation}\label{III def}
  \mathbf{III} := \sum_{a<b}\sum_{s,t\in \partial_{ab}}\#\left\{\text{positive maximal chains } (W^\|,W^\circ) \prec \left(\emptyset, \{a,b\}\right) \text{ of $(s,t)$-good pairs}\right\}.
\end{equation}
We rewrite $\mathbf{III}$ more explicitly, but we first introduce some more notation.
  \begin{definition}
    Let $P$ be a statement. We define
    \[
      \text{\gls{1P}} =
      \begin{cases*}
        1 &if $P$ is true\\
        0&if $P$ is false.
      \end{cases*}
    \]    
  \end{definition}
\begin{lemma}\label{III rewritten}
  We have
  \begin{equation*}                
  \mathbf{III} = \sum_{a<b} \#\left\{\{s,t\}\subset\partial_{ab}\Bigm|\ \text{left-break order of $\{s,t\}$ disagrees with right-break order}\right\}.
  \end{equation*}
\end{lemma}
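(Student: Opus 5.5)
Starting from the definition (\ref{III def}) of $\mathbf{III}$, I will unwind Mor\'an's construction of $\partial_{ab}\wedge\partial_{ab}$ in the special case $U=\{a,a,b,b\}$ (so $U^-=U^+=\{a,b\}$) and count the positive maximal chains $(W^\|,W^\circ)\prec(\emptyset,\{a,b\})$ of $(s,t)$-good pairs directly. The poset below $(\emptyset,\{a,b\})$ is very small. Each element of $\{a,b\}$ can either be kept ``circled'' or be ``broken'' (moved to $W^\|$). A maximal chain therefore records an order in which the two indices are broken: $a$ first or $b$ first. By the definitions in \cite{MoranHigherSquares}, breaking $a$ first corresponds to factoring $s=p\circ q$ and $t=p'\circ q'$ with $q,q'\in\partial_a$, $p,p'\in\partial_{b-1}$, which is exactly the left-break factorization of Definition \ref{ordering convention}. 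Breaking $b$ first corresponds to the right-break factorization.

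The first step is to translate $(s,t)$-goodness at each stage of a chain into an order condition. I expect a pair to be good when the first factors are distinct and compared in a fixed direction (say $q<q'$), or, if the first factors coincide, when the second factors satisfy the analogous comparison. That is, goodness should amount to $s<t$ in the corresponding lexicographic (left- or right-break) order. The second step is to identify which maximal chains are positive. The sign convention (depending on the parity of $n$ and on $a,b$) should make exactly one of the two chains positive for a given ordered pair $(s,t)$ when $s<t$ in left-break order but $s>t$ in right-break order, or the reverse, and no chain positive when the two orders agree. The diagonal terms $s=t$ should contribute zero, since no strict order condition can hold.

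With those two steps in hand, the count for fixed $a<b$ becomes a sum over ordered pairs $(s,t)$ of an indicator that the two orders disagree, with exactly one orientation counted. Summing over ordered pairs and identifying $(s,t)$ with the unordered pair $\{s,t\}$ then gives the stated formula. Throughout, the computation is only needed mod $2$.

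The main obstacle is the first step: matching Mor\'an's abstract notion of ``$(s,t)$-good pair'' and ``positive'' chain with the concrete lexicographic comparisons. Two subtleties arise. One is the index shift $b\mapsto b-1$ (via the $a_b$ notation and the isomorphisms $\partial_{a_b}\circ\partial_b\cong\partial_{ab}\cong\partial_{b_a}\circ\partial_a$). The other is the case where first factors coincide, which forces use of the second coordinate. I would handle this by checking the definition on the two chains separately and verifying, case by case on whether $q=q'$, that goodness reduces to lexicographic comparison. If the sign convention instead counts both orientations, the total would be doubled on agreeing pairs and hence vanish mod $2$, so the formula would still hold over $\mathbb{F}_2$.
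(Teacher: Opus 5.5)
Your overall plan (unwind Mor\'an's definition for $U^-=U^+=\{a,b\}$, split on whether the first factors of $s$ and $t$ coincide, compare with the two lexicographic orders) matches the paper's. But the steps you mark ``expect'' and ``should'' are where the proof actually lives, and as stated those predictions are wrong in a way that matters.

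Write $<_L,<_R$ for the left- and right-break orders, and recall that the fixed order on $\partial_{ab}$ is $<_L$. The missing idea is an asymmetry in Mor\'an's definition, which the paper invokes at the outset: an ordered pair $(s,t)$ with $s>t$ has \emph{no} positive maximal chains, so only pairs with $s<_L t$ contribute. For such a pair, the $a$-side candidate chain is $(\emptyset,\{a\})\prec(\emptyset,\{a,b\})$ when $q_s\neq q_t$ and $(\{a\},\{b\})\prec(\emptyset,\{a,b\})$ when $q_s=q_t$; there is an analogous chain on the $b$ side. Goodness is decided by which first factors coincide, not by an order comparison as your first step supposes. Positivity (for $n$ even) is decided by comparing the first differing factor. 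The $a$-side chain is therefore \emph{automatically} positive, because the order on $\partial_{ab}$ was chosen to be left-break: $s<t$ forces $q_s<q_t$, or $q_s=q_t$ and $p_s<p_t$. The $b$-side chain is positive iff $s<_R t$. So the count is $1+1|_{s<_R t}\equiv 1|_{t<_R s}$. In particular, an agreeing pair with $s<t$ has \emph{two} positive chains, not zero as your second step asserts; the answer is only right mod $2$.

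This asymmetry is essential. Your first step amounts to saying positivity is ``$a$-side chain iff $s<_L t$, $b$-side chain iff $s<_R t$'' for every ordered pair. If that were all, the orientations $(s,t)$ and $(t,s)$ together would contribute $1|_{s<_L t}+1|_{t<_L s}+1|_{s<_R t}+1|_{t<_R s}=2\equiv 0$ for every unordered pair, and $\mathbf{III}$ would vanish identically. Your second step has the same defect. Giving one positive chain to a disagreeing pair ``or the reverse'' gives one chain to each orientation of that pair, which contradicts your ``exactly one orientation counted.'' Your fallback is also backwards: if both orientations were counted, the disagreeing pairs would be doubled too and cancel, so the formula would fail rather than survive. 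To complete the argument, you must extract from Mor\'an's definition the vanishing for $s>t$. Then carry out the four-case check on whether $q_s=q_t$ and $q'_s=q'_t$, using that $\partial_{ab}$ carries the left-break order.
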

\begin{proof}
  In the formula (\ref{III def}) for $\mathbf{III}$, only the indices where $s<t$ contribute, since there are no positive maximal chains if $s>t$ (see \cite{MoranHigherSquares}). Therefore, it suffices to prove that for any $s,t$ such that $s<t$, the number of positive maximal chains of $(s,t)$-good pairs is $1\pmod 2$ if and only if the left-break order of $\{s,t\}$ disagrees with the right-break order. By our ordering convention, the latter statement is equivalent to the the right-break order being $(t,s)$. Our lemma is thus a consequence of the following statement:
  \[
    \#\{\text{positive maximal chains of $(s,t)$-good pairs}\} \equiv
    \begin{cases*}
      1 & if right-break order is $(t,s)$\\
      0 & if right-break order is $(s,t)$
    \end{cases*}\pmod 2.
  \]
  Write $s = p_s\circ q_s\in \partial_{b-1}\circ \partial_a$ and $s = p'_s\circ q'_s\in \partial_{a}\circ \partial_b$. Similarly, we write $t = p_t\circ q_t\in \partial_{b-1}\circ \partial_a$ and $t = p'_t\circ q'_t\in \partial_{a}\circ \partial_b$ (see Figure \ref{compositions} for a diagram). 
  \begin{figure}
    \def\T{0.5cm} \def\P{0.3cm}
    \begin{tabular}{ m{6.0cm} m{6.0cm}}
    \begin{tikzpicture}[scale=1.6]
  \path[draw, <-, shorten <=\T,shorten >=\T, thick] (0,1) -- (1,2) node [midway, label={[label distance=-0.2cm]135:$\partial_a^{n+2}$}] {};
  \path[draw, <-, shorten <=\T,shorten >=\T, thick] (2,1) -- (1,2) node [midway, label={[label distance=-0.2cm]45:$\partial_b^{n+2}$}] {};
  \path[draw, <-, shorten <=\T,shorten >=\T, thick] (1,0) -- (2,1) node [midway, label={[label distance=-0.25cm]-45:$\partial_a^{n+1}$}] {};
  \path[draw, <-, shorten <=\T,shorten >=\T, thick] (1,0) -- (0,1) node [midway, label={[label distance=-0.35cm]225:$\partial_{b-1}^{n+1}$}] {};
  \path[draw, <-, shorten <=\T,shorten >=\T] (1,0) to [bend left = 15] node[pos=0.5,left] {$s$} (1,2);
  \path[draw, <-, shorten <=\T,shorten >=\T, red] (1,0) to [bend right = 15] node[pos=0.5,right] {$t$} (1,2);

   \node at (1, 2) {$X_{n+2}$};
   \node at (0, 1) {$X_{n+1}$};
   \node at (2, 1) {$X_{n+1}$};
   \node at (1, 0) {$X_n$};
 \end{tikzpicture} &
 \begin{tikzpicture}[scale=1.6]
   \path[draw, <-, shorten <=\T,shorten >=\T] (0,1) to [bend left = 20] node [pos=0.5,above left] {$q_s$} (1,2);
   \path[draw, <-, shorten <=\T,shorten >=\T, red] (0,1) to [bend right = 20] node [pos=0.5,below right] {$q_t$} (1,2);
   \path[draw, <-, shorten <=\T,shorten >=\T, red] (2,1) to [bend left = 20] node [pos=0.5,below left] {$q'_t$} (1,2);
   \path[draw, <-, shorten <=\T,shorten >=\T] (2,1) to [bend right = 20] node [pos=0.5,above right] {$q'_s$} (1,2);
   \path[draw, <-, shorten <=\T,shorten >=\T, red] (1,0) to [bend left = 20] node [pos=0.5,above left] {$p'_t$} (2,1);
   \path[draw, <-, shorten <=\T,shorten >=\T] (1,0) to [bend right = 20] node [pos=0.5,below right] {$p'_s$} (2,1);
   \path[draw, <-, shorten <=\T,shorten >=\T] (1,0) to [bend left = 20] node [pos=0.5,below left] {$p_s$} (0,1);
   \path[draw, <-, shorten <=\T,shorten >=\T, red] (1,0) to [bend right = 20] node [pos=0.5,above right] {$p_t$} (0,1);

   \node at (1, 2) {$X_{n+2}$};
   \node at (0, 1) {$X_{n+1}$};
   \node at (2, 1) {$X_{n+1}$};
   \node at (1, 0) {$X_n$};
 \end{tikzpicture}
      \end{tabular}
      \caption{Left: the elements $s,t$ in the span $\partial_{ab} = \partial_{ab}(z)$ The thick arrows denote the spans $\partial^{n+2}_{a},\partial^{n+2}_{b},\partial^{n+1}_{b-1},\partial^{n+1}_{a}$, . Right: We write both $s$ and $t$ as compositions in  $\partial_{b-1}\circ\partial_a$ (composing along the left side of the diagram) or in $\partial_{a}\circ\partial_b$ (composing along the right side of the diagram).}
      \label{compositions}
  \end{figure}
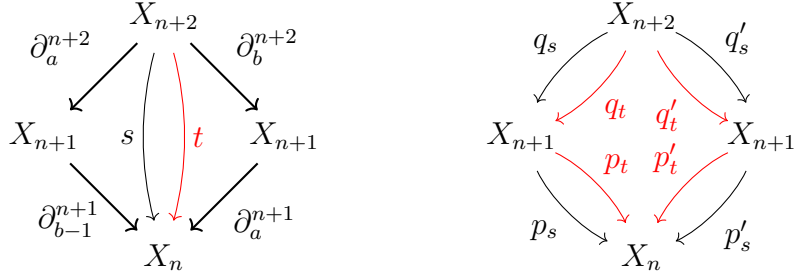
  Now we measure the number of positive maximal chains of $(s,t)$-good pairs. This measurement depends on whether $q_s = q_t$, and $q'_s = q'_t$. There are $4$ total cases, but we will only consider two, as the rest are a similar verification. We first introduce notation that will help our measurement.
  First consider the case where none of the equalities hold. Then the only possible positive maximal chains are $(\emptyset, \{a\})\prec (\emptyset, \{a,b\})$, and $(\emptyset, \{b\})\prec (\emptyset, \{a,b\})$. The first chain is positive if and only if (we are using the fact that $n$ is even here), $q_s<q_t$. The second chain is positive if and only if $q'_s<q'_t$. But note $s<t$ implies $q_s<q_t$ by Definition \ref{ordering convention}. Therefore, the number of positive maximal chains of $(s,t)$-good pairs is
  \[
    1 + 1|_{q'_s<q'_t} = 1_{q'_t<q'_s} = \begin{cases*} 1 & if right-break order of $\{s,t\}$ is $(t,s)$\\
      0 &  otherwise.
      \end{cases*}\pmod 2.
  \]\par
  Now suppose that $q_t = q_s$, but $q'_t\neq q'_s$. The only possible positive maximal chains are $(\emptyset, \{b\})\prec (\emptyset, \{a,b\})$, and $(\{a\},\{b\})\prec (\emptyset, \{a,b\})$. The first chain is positive if and only if $q'_s<q'_t$ and the second chain is positive if and only if $p_s<p_t$. But note that $s<t$ implies $p_s<p_t$ by Definition \ref{ordering convention}. We see once again that the number of positive maximal chains is $1\pmod 2$ is and only if the right-break order is $(t,s)$.
\end{proof}
We now turn back to expanding the term $\mathbf{I}$, replacing the elaborate conditions in the sum indexing with terms that only equal $1$ at the appropriate indices.
  \begin{align*}
  &\mathbf{I} = \sum_{a<b<c<d}(abd+acd+abc+bcd+ab+cd)m_{ad}m_{bc}\notag\\
  &\quad + \sum_{a<b<c<d}(abd+acd+abc+bcd+bc+ad)m_{ab}m_{cd}\notag\\
  &\quad +\sum_{a<b<c<d}(abd+acd+abc+bcd+ab+ac+ad+bc+bd+cd+a+b+c+d+1)m_{ac}m_{bd},
  \end{align*}
  which we rewrite as
  \begin{align*}
    &\sum_{a<b<c<d}(abd+acd+abc+bcd)(m_{bc}m_{ad} + m_{v}m_{ab} + m_{ac}m_{bd})\\
  &\quad+\sum_{a<b<c<d}\Bigl((ab+cd)(m_{ad}m_{bc} + m_{ac}m_{bd})+(ad+bc)(m_{ab}m_{cd} + m_{ac}m_{bd}) + (ac+bd)(m_{ac}m_{bd})\Bigr)\\
    &\quad+ \sum_{a<b<c<d}(a+b+c+d)m_{ac}m_{bd} + \sum_{a<b<c<d}m_{ac}m_{bd}
  \end{align*}

\begin{lemma}\label{3term 0}
  The first summand in our formula for $\mathbf{I}$ is equal to $0$. In other words,
  \begin{equation}\label{3term}
    \sum_{a<b<c<d}(abd+acd+abc+bcd)(m_{bc}m_{ad} + m_{cd}m_{ab} + m_{ac}m_{bd})=0.
    \end{equation}
\end{lemma}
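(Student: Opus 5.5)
The plan is to prove \eqref{3term} purely formally. The only input is the cocycle condition on $\alpha$, read as a relation among the numbers $m_{ab}=m_{ab}(z,\alpha)$. The geometry of $z$ should play no role.

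\emph{Step 1: row sums vanish.} Since $\alpha$ is a cocycle, for every $y\in X_{n+1}$ we have $\sum_a m_a(y)\equiv 0\pmod 2$. Fix $c$ and sum this over all $y$ that are targets of elements of $\partial_c$. By the isomorphism $\partial_{ac}\cong\partial_{a_c}\circ\partial_c$, and since $a\mapsto a_c$ is a bijection from $\{a\neq c\}$ onto the face indices of $X_{n+1}$, this gives
\[
\sum_{a\neq c} m_{ac}\equiv 0 \pmod 2\qquad\text{for every } c.
\]
So the symmetric matrix $(m_{ab})$, with zero diagonal, has all row sums even.

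\emph{Step 2: rewrite the sum over pairs of disjoint edges.} The coefficient $abd+acd+abc+bcd$ is the third elementary symmetric function $e_3$ of $\{a,b,c,d\}$. The bracket is the sum over the three perfect matchings of $\{a,b,c,d\}$. Hence the left side of \eqref{3term} equals
\[
\sum_{\{P,Q\}} e_3(P\cup Q)\, m_Pm_Q ,
\]
summed over unordered pairs of disjoint $2$-subsets $P,Q$. For $P=\{a,b\}$ and $Q=\{c,d\}$ we have $e_3=\pi_P\sigma_Q+\pi_Q\sigma_P$, where $\pi$ denotes the product and $\sigma$ the sum of the two elements. Therefore the expression equals
\[
\sum_P \pi_P m_P\sum_{Q\cap P=\emptyset}\sigma_Q m_Q,
\]
where now $(P,Q)$ runs over ordered pairs.

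\emph{Step 3: use row sums to reduce the inner sum.} First,
\[
\sum_{\text{all }Q}\sigma_Qm_Q=\sum_x x\sum_y m_{xy}\equiv 0
\]
by Step 1. So the inner sum equals the sum over $Q$ meeting $P=\{a,b\}$. Expand this over $Q=P$, $Q=\{a,y\}$ and $Q=\{b,y\}$ with $y\notin P$. Step 1 gives $\sum_{y\notin P}m_{ay}\equiv m_{ab}$, and likewise $\sum_{y\notin P}m_{by}\equiv m_{ab}$. With these, the inner sum collapses to
\[
\sum_{y\notin P} y\,(m_{ay}+m_{by}).
\]
The whole expression therefore becomes a sum over $3$-element sets $\{a,b,c\}$ of $abc$ times
\[
m_{ab}(m_{ac}+m_{bc})+m_{ac}(m_{ab}+m_{bc})+m_{bc}(m_{ab}+m_{ac}),
\]
which is $2(m_{ab}m_{ac}+m_{ab}m_{bc}+m_{ac}m_{bc})\equiv 0$.

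The main point requiring care is Step 1. One must check that the reindexing $a\mapsto a_c$ really matches $\partial_{ac}(z,\alpha)$ bijectively with the pairs $(u,v)$, where $u\in\partial_c(z,X_{n+1})$ and $v$ is a face map of $u_{\out}$ landing in $\alpha$. This is exactly the stated isomorphism $\partial^{n+1}_{a_c}\circ\partial^{n+2}_c\cong\partial^{n+2}_{ac}$. The remaining steps are bookkeeping mod $2$, in particular keeping track of ordered versus unordered pairs so that no factor of $2$ is lost.
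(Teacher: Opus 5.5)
Your proof is correct and follows the paper's argument. The paper also writes the sum over disjoint pairs as the full product $\sum_{c<d}cd\,m_{cd}\cdot\sum_{a<b}(a+b)m_{ab}$, which vanishes by Lemma~\ref{fa+fb}, plus overlap corrections, and it drops the diagonal terms because $ab(a+b)$ is even; the only difference is that it kills the one-shared-vertex correction $\sum_c c\sum_{a<b}(a+b)m_{ac}m_{bc}$ by grouping at the common vertex and citing Lemma~\ref{one intersection}, whereas you fix $P$ and use the row sums of Lemma~\ref{m_ab count} to reach a triangle sum that is visibly even.
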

\begin{proof}
  The left-hand side of Equation (\ref{3term}) can be written as
  \begin{align*}
    &\sum_{c<d}cd\cdot m_{cd}\underbrace{\sum_{a<b}(a+b)m_{ab}}_{=0} + \sum_c c\sum_{\substack{a<b\\a,b\neq c}}(a+b) m_{ac}m_{bc} + \sum_{a<b}\underbrace{ab(a+b)}_{=0}m_{ab}m_{ab}\\
    & = \sum_c c\sum_{\substack{a<b\\a,b\neq c}}(a+b) m_{ac}m_{bc} = 0 \pmod 2 \qquad(\text{by Lemma \ref{one intersection}}).\qedhere
  \end{align*}
\end{proof}.
\begin{lemma}\label{a+b lemma}
  \begin{equation}\label{a+b identity}
    \sum_{\substack{a<b,\ c<d\\\text{disjoint}}}(a+b)(c+d)m_{ab}m_{cd} = \sum_c\sum_{\substack{a,b\neq c \\ a<b}}ab\cdot m_{ac}m_{bc} + \sum_{a<b} ab\cdot m_{ab}
\end{equation}
\end{lemma}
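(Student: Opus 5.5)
The plan is to expand the square of the cocycle quantity $S := \sum_{a<b}(a+b)\,m_{ab}$ and read off the disjoint-pair part. Recall that $S\equiv 0\pmod 2$; this was already used (as an underbraced vanishing term) in the proof of Lemma \ref{3term 0}. I read the left-hand side of (\ref{a+b identity}) as a sum over unordered pairs $\{\{a,b\},\{c,d\}\}$ of disjoint two-element subsets; this is the reading under which the identity is a statement modulo $2$. Since $S$ is even, $S^2\equiv 0\pmod 4$, so $S^2/2\equiv 0 \pmod 2$. On the other hand, expanding $S^2$ over ordered pairs of index pairs splits it into three pieces. The diagonal piece is $\sum_{a<b}(a+b)^2m_{ab}^2$. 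The pairs sharing exactly one index contribute twice $\sum_c\sum_{a<b,\ a,b\neq c}(a+c)(b+c)\,m_{ac}m_{bc}$. The disjoint pairs contribute twice the left-hand side of (\ref{a+b identity}). Dividing by $2$ and reducing modulo $2$ should express the left-hand side as (diagonal piece)$/2$ plus the shared-index sum, modulo $2$.

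Next I will simplify each piece modulo $2$. For the shared-index piece, I will use $(a+c)(b+c)\equiv ab+c(a+b)+c \pmod 2$. The term $ab\,m_{ac}m_{bc}$ is exactly the first summand on the right-hand side of (\ref{a+b identity}). The term $c(a+b)\,m_{ac}m_{bc}$ vanishes after summing over $c$ by Lemma \ref{one intersection}, just as in the proof of Lemma \ref{3term 0}. This leaves $\sum_c c\sum_{a<b}m_{ac}m_{bc}$, which equals $\sum_c c\binom{M_c}{2}$ modulo $2$ up to diagonal corrections, where $M_c=\sum_{a\neq c}m_{ac}$. For the diagonal piece, writing $x_{ab}=(a+b)m_{ab}$, I will use $x^2 = x + 2\binom{x}{2}$. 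This gives $\tfrac12\sum x_{ab}^2 = S/2 + \sum_{a<b}\binom{x_{ab}}{2}$, and $\binom{x_{ab}}{2}\equiv (a+b)\binom{m_{ab}}{2}\pmod 2$.

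The main obstacle is to show that the leftover terms together reduce to $\sum_{a<b}ab\,m_{ab}$. These leftovers are $S/2$, $\sum(a+b)\binom{m_{ab}}{2}$, and $\sum_c c\sum_{a<b}m_{ac}m_{bc}$. Since $S/2$ is not obviously a combination of the $m_{ab}$, the first thing I would try is to avoid dividing $S$ by hand. Instead I would rewrite $\sum_{a<b}m_{ac}m_{bc}$ as $\binom{M_c}{2}-\sum_a\binom{m_{ac}}{2}$. I would then use the cocycle condition, which says that $m(y,\alpha)$ is even for every $y\in X_{n+1}$, so that each $M_c=\sum_{s\in\partial_c}m(s_{\out})$ is even and $\binom{M_c}{2}\equiv M_c/2$. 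The goal is to see that $\sum_c c\,M_c/2$ cancels $S/2$ modulo $2$, leaving $\sum_{a<b}(a+b+a+b)\binom{m_{ab}}{2}$-type terms plus a linear term, and to identify that linear term with $\sum ab\,m_{ab}$ via $c\cdot c=c$ bookkeeping: each $m_{ab}$ appears in $M_a$ and $M_b$, producing coefficients $a$ and $b$. If this direct cancellation fails, I would fall back on the more refined counting lemmas of Appendix \ref{appendix} (of the same type as Lemma \ref{one intersection}), applied face by face to each $y=s_{\out}$.
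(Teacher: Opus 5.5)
Your decomposition $0\equiv S^2/2=A+B+C$ (disjoint, shared-index, diagonal) is the paper's. Your treatment of the shared-index piece is correct and matches the paper's computation of $B'$. Lemma \ref{one intersection} kills the $c(a+b)$ part. Writing $\sum_{a<b}m_{ac}m_{bc}=\binom{M_c}{2}-\sum_{a\neq c}\binom{m_{ac}}{2}$ with $M_c$ even gives $\sum_c c\binom{M_c}{2}\equiv\sum_c cM_c/2=S/2$. So $B\equiv\sum_c\sum_{a<b}ab\,m_{ac}m_{bc}+S/2+\sum_{a<b}(a+b)\binom{m_{ab}}{2}$.

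The gap is in the diagonal piece. The congruence $\binom{x_{ab}}{2}\equiv(a+b)\binom{m_{ab}}{2}$ is false: for $a=1$, $b=2$, $m_{ab}=1$ the left side is $\binom{3}{2}=3$ and the right side is $0$. Lemma \ref{Leibniz} actually gives $\binom{(a+b)m_{ab}}{2}\equiv(a+b)\binom{m_{ab}}{2}+m_{ab}\binom{a+b}{2}$. Since $\binom{a+b}{2}=\binom{a}{2}+\binom{b}{2}+ab$, and $\sum_{a<b}\bigl(\binom{a}{2}+\binom{b}{2}\bigr)m_{ab}\equiv0$ by Lemma \ref{fa+fb}, the term you dropped is exactly $\sum_{a<b}ab\,m_{ab}$. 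The paper finds the same term by expanding $(a+b)^2=a^2+2ab+b^2$, whose cross term contributes $\sum ab\,m_{ab}^2\equiv\sum ab\,m_{ab}$.

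This dropped term is why the last step looks like an obstacle to you, and your proposed way around it cannot work. With your version of $C$, the two copies of $S/2$ and of $\sum_{a<b}(a+b)\binom{m_{ab}}{2}$ cancel exactly, so no linear term is left over. In any case, $\sum_c cM_c$ is identically $\sum_{a<b}(a+b)m_{ab}=S$. Tracking how $m_{ab}$ enters $M_a$ and $M_b$ can only produce coefficients $a+b$, never $ab$.

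Carried out literally, your plan yields $A\equiv\sum_c\sum_{a<b}ab\,m_{ac}m_{bc}$, which is false. Take $m_{12}=m_{13}=m_{23}=1$: then every $M_c=2$ and $A=0$, but the shared-index sum is $6+3+2\equiv1$.

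The fix is to compute the diagonal piece correctly as $C\equiv S/2+\sum_{a<b}(a+b)\binom{m_{ab}}{2}+\sum_{a<b}ab\,m_{ab}$. Then $B+C$ equals the right-hand side of (\ref{a+b identity}) modulo $2$, and the lemma follows as in the paper, with no need for the face-by-face fallback.
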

\begin{proof}
\begin{align*}
  0 &= \Bigl(\sum_{a<b}(a+b)m_{ab}(z)\Bigr)^2/2 \\
    &= \sum_{\substack{a<b,\ c<d\\\text{disjoint}}}(a+b)(c+d)m_{ab}m_{cd} + \sum_c\sum_{\substack{a,b\neq c\\a<b}}(a+c)(b+c)m_{ac}m_{bc} \\
    &\quad+ \Bigl(\sum_{a<b}\bigl((a+b)m_{ab}\bigr)^2\Bigr)/2\\
    &= A + B + C
\end{align*}
Now we write $B$ as follows:
\begingroup
\allowdisplaybreaks
\begin{align*}
  B &= \sum_{c}\sum_{\substack{a,b\neq c\\a<b}}(c+c(a+b)+ab)m_{ac}m_{bc}\\
    &= \sum_c c\sum_{\substack{a,b\neq c\\a<b}}m_{ac}m_{bc} +  \sum_c\sum_{\substack{a<b\\\neq c}}c(a+b)m_{ac}m_{bc}+ \sum_c\sum_{\substack{a,b\neq c\\a<b}}ab\cdot m_{ac}m_{bc}\\
    &= \sum_c c\sum_{\substack{a,b\neq c\\a<b}}m_{ac}m_{bc} + 0+ \sum_c\sum_{\substack{a,b\neq c\\a<b}}ab\cdot m_{ac}m_{bc}\qquad \text{(By Lemma \ref{one intersection})}\\
  &= B' + \sum_c\sum_{\substack{\substack{a,b\neq c\\a<b}}}(am_{ac})(bm_{bc})
\end{align*}
\endgroup
where $B'$ is defined by
\begingroup
\allowdisplaybreaks
\begin{align*}
  B' &= \sum_c c\left(\binom{\sum_{a\neq c}m_{ac})}{2}-\sum_{a\neq c}\binom{m_{ac}}{2}\right)\\
      &= \sum_c c\frac{\sum_{a\neq c}m_{ac}}{2}+ \sum_{a<b}(a+b)\binom{m_{ab}}{2}\\
      &= \frac{\sum_cc\sum_{a\neq c}m_{ac}}{2} + \sum_{a<b}(a+b)\binom{m_{ab}}{2}\\
  &= \frac{\sum_{a<b}(a+b)m_{ab}}{2} + \sum_{a<b}(a+b)\binom{m_{ab}}{2}
\end{align*}
\endgroup
Now we can rewrite $C$ as the following:
\begingroup
\allowdisplaybreaks
\begin{align*}
  C &= \Bigl(\sum_{a<b}(a^2+2ab+b^2)m_{ab}^2\Bigr)/2\\
    &= \sum_{a<b} ab\cdot m_{ab}^2 + \Bigl(\sum_{a<b}(a^2+b^2)m_{ab}^2\Bigr)/2\\
  &= \sum_{a<b} ab\cdot m_{ab} + \sum_{a<b} (a+b) \binom{m_{ab}}{2} + \Bigl(\sum_{a<b}(a+b)m_{ab}\Bigr)/2,
\end{align*}
\endgroup
with the third equality being from the equation
\begin{align*}
  \Bigl(\sum_{a<b}(a^2+b^2)m_{ab}^2\Bigr)/2
  &= \sum_{a<b}\Bigl(a m_{ab}(a m_{ab}-1)\Bigr)/2 + \Bigl(b m_{ab}(b m_{ab}-1)\Bigr)/2\\
  &\quad+ \sum_{a<b}\Bigl((a+b)m_{ab}\Bigr)/2\\
  &=\sum_{a<b}\binom{am_{ab}}{2} + \binom{bm_{ab}}{2} + \Bigl(\sum_{a<b}(a+b)m_{ab}\Bigr)/2\\
  &= \sum_{a<b}(a+b)\binom{m_{ab}}{2}+\sum_{a<b}m_{ab}\left(\binom{a}{2}+  \binom{b}{2}\right)\qquad \text{(Using Lemma \ref{Leibniz})}\\
  &\quad + \Bigl(\sum_{a<b}(a+b)m_{ab}\Bigr)/2\\
  &= \sum_{a<b}(a+b)\binom{m_{ab}}{2} + 0 + \Bigl(\sum_{a<b}(a+b)m_{ab}\Bigr)/2\qquad \text{(Using Lemma \ref{fa+fb})}
\end{align*}
Putting $A+B+C$ together, we obtain Equation (\ref{a+b identity}) that is the statement of our lemma.
\end{proof}
We are about to state our final formula for $\langle\mathfrak{sq}(\alpha),z\rangle$, but we wish to quotient out terms which come from a coboundary. We shall use the following definition.
  \begin{definition}
    Let $f, g\in C^{n+2}(X_\bullet;\mathbb{F}_2)$ be two cochains. We say that $f=g$\gls{mod coboundary} if there exists a coboundary $\delta \omega\in C^{n+2}(X_{\bullet};\mathbb{F}_2)$ such that $f -g = \delta \omega$. We also abuse notation and say $f(z)=g(z)\mod\coboundary$. 
  \end{definition}
  
\begin{proposition}\label{Moran sq2 rewrite}
  We can rewrite $\langle\mathfrak{sq}(\alpha),z\rangle$ as
  \begin{equation}\label{simplified I terms}
    \begin{aligned}
      \langle\mathfrak{sq}(\alpha),z\rangle &=  \sum_c\sum_{\substack{a,b\neq c\\a<b}}ab\cdot m_{ac}m_{bc} + \sum_{a<b} ab\cdot m_{ab} + \sum_{a<b<c<d}(a+b+c+d)m_{ac}m_{bd}\\
    &\quad + \sum_{K\subset \Gamma(z,\alpha)} (1 + \#\{a\to \overrightarrow{b}\to c\ \vert\  a>b\} + \#\{a\to \overleftarrow{b}\to c\ \vert\  a<b\})\\
    &\quad + \sum_{\substack{s,t\in \partial_c\\s<t}}\sum_{\substack{c<a<b\\a<b<c\\b<c<a}}m_{a_c}(s_{\out})m_{b_c}(t_{\out}) + \sum_{a<b}\binom{m_{ab}}{2}\\
    &\quad+ \sum_{\substack{s,t\in \partial_c\\s<t}}\sum_{\substack{a<b\\\text{$a,b$ even}}}m_{a}(s_{\out})m_{b}(t_{\out}) + \sum_{\substack{s,t\in \partial_c\\s<t}}\sum_{\substack{a<b\\\text{$a,b$ odd}}}m_b(s_{\out})m_a(t_{\out}).
  \end{aligned}
\end{equation}
mod coboundary.
\end{proposition}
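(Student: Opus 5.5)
We start from $\langle\mathfrak{sq}^2(\alpha),z\rangle=\mathbf{I}+\mathbf{II}+\mathbf{III}$ and rewrite each piece. By Lemma~\ref{3term 0}, the cubic-coefficient part of $\mathbf{I}$ vanishes. For the quadratic-coefficient part we regroup
\[
(ab+cd)(m_{ad}m_{bc}+m_{ac}m_{bd})+(ad+bc)(m_{ab}m_{cd}+m_{ac}m_{bd})+(ac+bd)m_{ac}m_{bd}
\]
so that, summed over $a<b<c<d$, it becomes the sum over ordered disjoint pairs $\{a<b\},\{c<d\}$ of a coefficient times $m_{ab}m_{cd}$. We then check that this coefficient is congruent mod $2$ to $(a+b)(c+d)$, up to terms of the form $(\text{linear})\cdot m\,m$ that combine with the $\sum(a+b)m_{ab}=0$ identity. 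Lemma~\ref{a+b lemma} converts this into $\sum_c\sum_{a<b}ab\,m_{ac}m_{bc}+\sum_{a<b}ab\,m_{ab}$, which gives the first two terms of (\ref{simplified I terms}). The leftover pieces of $\mathbf{I}$ are $\sum(a+b+c+d)m_{ac}m_{bd}$, kept as is, and $\sum_{a<b<c<d}m_{ac}m_{bd}$, which we hold for the last step.

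Next we split $\mathbf{II}$. Its even--even part is kept verbatim. In its odd--odd part ($a>b$) we rename $a\leftrightarrow b$, which yields the last displayed sum $\sum m_b(s_{\out})m_a(t_{\out})$ with $a<b$. The reindexing from $(a,b)$ with $a_c,b_c$ to the unshifted indices is the source of the middle sum over the cyclic orderings $c<a<b$, $a<b<c$, $b<c<a$: these are exactly the configurations where the parity conditions on $a_c,b_c$ and on $a,b$ differ. I would verify this case by case according to the position of $c$ relative to $a,b$. This uses that $a_c$ and $a$ differ by $1$ exactly when $a>c$. The resulting correction terms should be $m_{a_c}(s_{\out})m_{b_c}(t_{\out})$ restricted to those three cyclic patterns, with the mismatched-parity pieces cancelling in pairs.

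The main obstacle is $\mathbf{III}$, together with the leftover $\sum_{a<b<c<d}m_{ac}m_{bd}$. We must show their sum equals
\[
\sum_{K\subset\Gamma(z,\alpha)}\bigl(1+\#\{a\to\overrightarrow b\to c\mid a>b\}+\#\{a\to\overleftarrow b\to c\mid a<b\}\bigr)+\sum_{a<b}\binom{m_{ab}}{2}
\]
modulo a coboundary. By Lemma~\ref{III rewritten}, $\mathbf{III}$ counts pairs in $\partial_{ab}$ on which the left-break and right-break orders disagree. The plan has three steps.

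\begin{enumerate}
\item Subtract $\binom{m_{ab}}{2}$, so that we count pairs whose orders agree.
\item Express the difference between the two orderings through the boundary matching $\mathfrak m$ fixed by the convention. Comparing consecutive elements in the left-break order with those in the right-break order traces out the cycles $K$ of $\Gamma(z,\alpha)$, and a crossing count along each cycle produces the $1+\#\{\cdots\}$ contribution. The $\sum m_{ac}m_{bd}$ term should account for the interleaved chords.
\item Absorb the remaining discrepancy, which depends only on choices made at the level of $y\in X_{n+1}$, into $\delta\omega$ for a cochain $\omega(y)$ counting inversions in $\partial(y,\alpha)$. We then check that $\delta\omega$ reproduces exactly the unmatched terms.
\end{enumerate}

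I expect to rely on the appendix lemmas for the parity bookkeeping and on the correspondence-ordering appendix for comparing the orderings. Finally, the case of odd $n$ is handled the same way with the opposite sign conventions.
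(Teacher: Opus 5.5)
Your proposal has genuine gaps. The quadratic step does not work as described, the three-pattern sum is attributed to the wrong source, and the central crossing identity is aimed at a wrong target and never actually argued.

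\textbf{The quadratic-coefficient part of $\mathbf{I}$.} If you compare coefficients term by term, this part equals
\[
\sum_{\substack{a<b,\ c<d\\ \text{disjoint}}}(a+b)(c+d)m_{ab}m_{cd}\;+\;\sum_{a<b<c<d}(ac+bd)\bigl(m_{ab}m_{cd}+m_{ad}m_{bc}+m_{ac}m_{bd}\bigr).
\]
So the discrepancy from $(a+b)(c+d)$ carries the quadratic weight $ac+bd$, the pairing of the \emph{interleaved} indices. It is not a linear-coefficient term, and $\sum_{a<b}(a+b)m_{ab}=0$ cannot absorb it. Its vanishing is a genuine crossing statement. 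This is exactly what the proof of Lemma~\ref{ab+cd lemma} establishes: rewriting the sum as $\sum_{a<b}ab\,S_{ab}$ with $c,d$ interleaving $a,b$ puts weight $ac+bd$ on each $a<b<c<d$. The proof then shows $S_{ab}=m_{ab}x_{ab}+l_al_b+r_ar_b\equiv 0$ in the chord presentation. Only after combining this with Lemma~\ref{a+b lemma} do you get the first two terms of (\ref{simplified I terms}).

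\textbf{$\mathbf{II}$ and the three-pattern sum.} $\mathbf{II}$ produces no correction at all. The map $a\mapsto a_c$ is an order-preserving bijection $[n+2]\setminus\{c\}\to[n+1]$, so the parity conditions on $a_c,b_c$ are just parity conditions on the new index. Hence $\mathbf{II}$ is literally the last line of (\ref{simplified I terms}) after swapping $a\leftrightarrow b$ in the odd part. Your "mismatch" claim already fails for the pattern $a<b<c$, where neither index is shifted.

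The sum $\sum_{s<t}\sum m_{a_c}(s_{\out})m_{b_c}(t_{\out})$ over $c<a<b$, $a<b<c$, $b<c<a$ actually comes from crossings. In the perturbed chord presentation $\mathcal{C}(z,\alpha)$, chord ends at $c$ through $s$ lie left of those through $t$ when $s<t$. A chord from $c$ (via $s$) to $a$ and one from $c$ (via $t$) to $b$ cross exactly in those three patterns. Consequently the identity you set out to prove for $\mathbf{III}+\sum_{a<b<c<d}m_{ac}m_{bd}$ is missing this term. Your steps 2--3 (tracing consecutive elements of the two orders, a $\delta\omega$ counting inversions) do not identify which quantities count crossings, so they are not yet an argument.

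\textbf{The missing idea: Proposition~\ref{cross counting}.} The total number of crossings in $\mathcal{C}(z,\alpha)$ is the sum of four pieces:
\begin{enumerate}
\item four-distinct-index interleavings, $\sum_{a<b<c<d}m_{ac}m_{bd}$;
\item one-shared-index crossings with $s\neq t$, which is the three-pattern sum;
\item the $s=t$ diagonal terms, which are a coboundary by Lemma~\ref{extreme c};
\item pairs in $\partial_{ab}$ whose left- and right-break orders agree.
\end{enumerate}
The chord cycles $\mathcal{C}(K)$ for distinct $K\subset\Gamma(z,\alpha)$ are closed curves that meet evenly. So the total reduces to the self-crossings of each $\mathcal{C}(K)$, which a telescoping count along the cycle matches with that cycle's summand $1+\#\{\cdots\}+\#\{\cdots\}$ on the right-hand side of Proposition~\ref{cross counting}. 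Finally, adding $\mathbf{III}$ (the disagreeing pairs) turns the agreeing pairs into $\sum_{a<b}\binom{m_{ab}}{2}$.
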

\begin{proof}
  Recall that $\langle\mathfrak{sq}(\alpha),z\rangle = \mathbf{I} + \mathbf{II} + \mathbf{III}$, where
  \begin{equation}\label{I recall}
  \begin{aligned}
    \mathbf{I} &= \sum_{a<b<c<d}(abd+acd+abc+bcd)(m_{bc}m_{ad} + m_{v}m_{ab} + m_{ac}m_{bd})\notag\\
               &+\sum_{a<b<c<d}\Bigl((ab+cd)(m_{ad}m_{bc} + m_{ac}m_{bd})+(ad+bc)(m_{ab}m_{cd} + m_{ac}m_{bd}) + (ac+bd)(m_{ac}m_{bd})\Bigr)\\
    &\quad + \sum_{a<b<c<d}(a+b+c+d)m_{ac}m_{bd} + \sum_{a<b<c<d}m_{ac}m_{bd}
  \end{aligned}
  \end{equation}
  From Lemma (\ref{3term 0}), we see that the first sum equals $0$. By subtracting Equation (\ref{ab+cd=0}) in Lemma \ref{ab+cd lemma} from Equation (\ref{a+b identity}) in Lemma \ref{a+b lemma}, we have that the second sum in Equation (\ref{I recall}) equals
  \[
    \sum_c\sum_{\substack{a,b\neq c\\a<b}}ab\cdot m_{ac}m_{bc} + \sum_{a<b} ab\cdot m_{ab}.
  \]
  Finally, we recall from Proposition \ref{cross counting} that
  \begin{equation}\label{ac*bd equation}
    \begin{aligned}
      &\sum_{a<b<c<d}m_{ac}m_{bd} = \sum_{\substack{s,t\in \partial_c\\s<t}}\sum_{\substack{c<a<b\\a<b<c\\b<c<a}}m_{a_c}(s_{\out})m_{b_c}(t_{\out})\\
      &\quad+ \sum_{a<b} \#\left\{\{s,t\}\subset\partial_{ab}(z,\alpha)\Bigm|\ \text{left-break order of $\{s,t\}$ agrees with right-break order}\right\}\\
      &\quad+ \sum_K (1 + \#\{a\to \overrightarrow{b}\to c\ \vert\  a>b\} + \#\{a\to \overleftarrow{b}\to c\ \vert\  a<b\})\mod \coboundary.
    \end{aligned}
  \end{equation}
  Now recall from Lemma \ref{III rewritten} that
  \[
    \mathbf{III} = \sum_{a<b} \#\left\{\{s,t\}\subset\partial_{ab}(z,\alpha)\Bigm|\ \text{left-break order of $\{s,t\}$ disagrees with right-break order}\right\}
  \]
  We add this term to the second line of Equation (\ref{ac*bd equation}) to obtain $\sum_{a<b}\binom{m_{ab}}{2}$.\par
  We have shown that the sum $\mathbf{I} + \mathbf{III}$ gives us all but the last line in Equation (\ref{simplified I terms}). The last line is identically $\mathbf{II}$, finishing our proof.
\end{proof}
\section{The difference of the two formulas is a coboundary}\label{difference}
Our next task is to prove the difference $\sq^2(\alpha) - \mathfrak{sq}^2(\alpha)$ is a coboundary. Using Proposition \ref{Moran sq2 rewrite}, we can write the difference as the following.
\begingroup
\allowdisplaybreaks
\begin{align*}
  &\langle\sq^2(\alpha) - \mathfrak{sq}^2(\alpha),z\rangle \\
  &= \sum_{\substack{a,b\neq c\\a<b}}ab\cdot m_{ac}m_{bc} + \sum_{\substack{s,t\in \partial_c\\s<t}}\sum_{\substack{c<a<b\\a<b<c\\b<c< a}}m_{a_c}(s_{\out})m_{b_c}(t_{\out})\\
  &\quad + \sum_{\substack{s,t\in \partial_c\\s<t}}\sum_{\substack{\text{$a,b$ even}\\a<b}}m_{a}(s_{\out})m_{b}(t_{\out}) + \sum_{\substack{s,t\in \partial_c\\s<t}}\sum_{\substack{\text{$a,b$ odd}\\a>b}}m_{a}(s_{\out})m_{b}(t_{\out})\\
  &\quad+ \sum_{a<b<c<d}(a+b+c+d)m_{ac}m_{bd} + \sum_{a<b}\binom{m_{ab}}{2} + \Bigl(\sum_{a<b}(a+b)m_{ab}(z)\Bigr)/2\\
\end{align*}
\endgroup
The rest of this section is devoted to progressively simplifying the above formula, eventually proving it is $0$ mod coboundary. The following proposition is our first (and most involved) step:
\begin{proposition}\label{big simplify}
  \begin{align*}
    &\sum_{\substack{a,b\neq c\\a<b}}ab\cdot m_{ac}m_{bc} + \sum_{\substack{s,t\in \partial_c\\s<t}}\sum_{\substack{c<a<b\\a<b<c\\b<c<a}}m_{a_c}(s_{\out})m_{b_c}(t_{\out})\\
    &\quad+ \sum_{\substack{s,t\in \partial_c(z)\\s<t}}\Bigl(\sum_{\substack{\text{$a,b$ even}\\a<b}}m_{a}(s_{\out})m_{b}(t_{\out}) + \sum_{\substack{\text{$a,b$ odd}\\b<a}}m_{a}(s_{\out})m_{b}(t_{\out})\Bigr)\\
    &=  \sum_{\substack{s,t\in \partial_c\\s<t}}\sum_{\substack{c<a<b\\a<b<c\\b<c<a}}(a+b)m_{a_c}(s_{\out})m_{b_c}(t_{\out}) + \sum_{\substack{c,\\s\in \partial_c}}\sum_{\substack{c<a<b\\a<b<c}}(a+b)m_{a_c}(s_{\out})m_{b_c}(s_{\out})\\
    & + \sum_{\substack{s,t\in \partial_c\\s<t}}\Bigl(\sum_{\substack{a,b,c\\\text{disjoint,}\\c<a}}m_{a_c}(s_{\out})m_{b_c}(t_{\out})\Bigr) + \sum_{{\substack{c,\\s\in \partial_c}}}\sum_{c<a<b}m_{a_c}(s_{\out})m_{b_c}(t_{\out}) \mod \coboundary.
  \end{align*}
\end{proposition}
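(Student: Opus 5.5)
The plan is to convert everything to sums over pairs of span elements $s,t\in\partial_c$ and then compare the two sides term by term, absorbing leftovers into coboundaries.

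\textbf{Step 1: rewrite the first term.} I will expand $\sum_{a<b}ab\cdot m_{ac}m_{bc}$ via the isomorphism $\partial_{ac}\cong\partial_{a_c}\circ\partial_c$. This gives
\[
m_{ac}m_{bc}=\sum_{s,t\in\partial_c}m_{a_c}(s_{\out})m_{b_c}(t_{\out}),
\]
and the sum over ordered pairs $(s,t)$ splits as $s<t$, $s>t$, and $s=t$. Swapping the roles of $s$ and $t$ (equivalently of $a$ and $b$) folds the $s>t$ part into the $s<t$ part with $a,b$ interchanged. The diagonal $s=t$ produces the terms $\sum_{s\in\partial_c}$ on the right-hand side.

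\textbf{Step 2: match the parity sums.} Next I will rewrite the coefficient $ab$, together with the parity-restricted sums (``$a,b$ even'', ``$a,b$ odd''), in terms of the shifted indices $a_c,b_c$. The relevant identity is $a_c\equiv a+1_{a>c}$, so every parity indicator becomes a polynomial in $a,b$ and $1_{c<a}$, $1_{c<b}$. The sums are then organized by the relative position of $c$ among $\{a,b\}$: $c<a<b$, $a<c<b$, $a<b<c$, plus the reversed orders. The goal is to see that, modulo $2$, the coefficient collapses to $(a+b)$ on the cyclic orderings $c<a<b$, $a<b<c$, $b<c<a$, plus the indicator $1_{c<a}$ that yields the third sum on the right. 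I expect to use $ab+ (a+1)(b+1)=a+b+1$-type identities here and will check each of the six orderings by hand.

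\textbf{Step 3: discard the remaining terms as coboundaries.} Any leftover of the form $\sum_{s,t\in\partial_c}f(a)m_{a_c}(s_{\out})\cdot m_{b_c}(t_{\out})$ with no ordering constraint between $s$ and $t$ factors as a product of $\sum_{s}m_{a_c}(s_{\out})$. I will kill such terms by Lemma \ref{one intersection}, which states that cocycle counts vanish on each $y$. Lemma \ref{one intersection} applies in the paper's appendix only after summing over $a$, so I must arrange that the unconstrained index is summed freely. Leftovers that do not factor this way should be $\delta\omega$ for $\omega(y)=\sum_{a<b}g(a,b)\binom{m_a(y)}{}\cdots$, i.e.\ quadratic expressions in $m_a(y)$ evaluated at $y=s_{\out}$. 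Checking that these are genuinely coboundaries is the main obstacle. There I would try $\omega(y)=\sum_{a<b,\ \text{parity cond.}}m_a(y)m_b(y)$ and compute $\delta\omega(z)=\sum_c\sum_{s\in\partial_c}\omega(s_{\out})$, which produces exactly the $s=t$ diagonal terms needed to convert between the constrained and free forms.
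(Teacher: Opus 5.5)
Your Steps 1 and 2 follow the paper's own bookkeeping. You decompose $m_{ac}m_{bc}=\sum_{s,t\in\partial_c}m_{a_c}(s_{\out})m_{b_c}(t_{\out})$, fold $s>t$ into $s<t$, reindex the parity sums with $a_c\equiv a+1|_{c<a}$, and add the cyclic indicator. Your parity-restricted products, e.g.\ $\omega(y)=\sum_{a<b}ab\,m_a(y)m_b(y)$, do dispose of the diagonal $ab$-terms, as in Lemma \ref{simple coboundary}.

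The gap is that the coefficients do \emph{not} collapse to the right-hand side, as you expect in Step 2. Left minus right has coefficient $a|_{c<a}+b|_{c<b}$ (i.e.\ $a\cdot 1|_{c<a}+b\cdot 1|_{c<b}$) on every term with $s<t$, $a\neq b$. After the diagonal rewriting and Lemma \ref{simple a+b}, the $s=t$, $a<b$ terms carry the same coefficient. So everything hinges on showing
\[
\begin{aligned}
D=\sum_c\Bigl(&\sum_{\substack{s,t\in\partial_c\\ s<t}}\sum_{a\neq b}(a|_{c<a}+b|_{c<b})\,m_{a_c}(s_{\out})m_{b_c}(t_{\out})\\
&+\sum_{s\in\partial_c}\sum_{a<b}(a|_{c<a}+b|_{c<b})\,m_{a_c}(s_{\out})m_{b_c}(s_{\out})\Bigr)
\end{aligned}
\]
is a coboundary. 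Step 3 only asserts that such leftovers ``should be'' of the form $\delta\omega$.

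Your factorization idea is the right first move, but it does not finish the job. Complete $D$ to the free double sum $\sum_{s,t\in\partial_c}\sum_{a,b}a|_{c<a}\,m_{a_c}(s_{\out})m_{b_c}(t_{\out})$. That sum vanishes because $\sum_b m_{b_c}(t_{\out})=\#\partial(t_{\out},\alpha)\equiv 0$; this is the cocycle condition as in Lemma \ref{m_ab count}, not Lemma \ref{one intersection}. Completing it forces you to add the $s=t$, $a=b$ terms, and since $m^2\equiv m$ you get $D=\sum_{c<a}a\,m_{ca}(z,\alpha)$.

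This linear term is not zero pointwise. Already for $z$ the standard $2$-simplex and $\alpha$ the sum of its vertices, it equals $1+2+2\equiv 1$. Its weight $a\cdot 1|_{c<a}$ depends on $c$, not just on the position $a_c$ inside $s_{\out}$. So it is not directly of the form $\sum_c\sum_{s\in\partial_c}\omega(s_{\out})$, and your proposal supplies no primitive for it.

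The missing ingredient is Proposition \ref{a sum}: $\sum_{a<b}b\,m_{ab}(z,\alpha)=\langle\delta H\alpha,z\rangle$ with $\langle H\alpha,y\rangle=\sum_a\binom{a+1}{2}m_a(y,\alpha)$. Equivalently, apply Lemma \ref{fa+fb} with $f(x)=\binom{x+1}{2}$ to replace the weight $b$ on the pair $\{a<b\}$ by $\binom{b}{2}+\binom{a+1}{2}=f(b_a)+f(a_b)$. This new weight depends only on positions in the intermediate faces, and is therefore $\delta$ of a function of $y$. This is how the paper's Lemma \ref{iterate through s,a} closes the argument.
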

\begin{proof}
  For a given $s<t$, we have
  \begingroup
  \allowdisplaybreaks
  \begin{align*}
    &\sum_{\substack{\text{$a,b$ even}\\a<b}}m_{a}(s_{\out})m_{b}(t_{\out}) + \sum_{\substack{\text{$a,b$ odd}\\b<a}}m_{a}(s_{\out})m_{b}(t_{\out})\\
    &= \sum_{a<b}(a+1)(b+1)m_{a}(s_{\out})m_{b}(t_{\out}) + \sum_{a>b}(ab)m_{a}(s_{\out})m_{b}(t_{\out})\\
    &= \sum_{\substack{a,b\neq c\\a<b}}(a_c+1)(b_c+1)m_{a_c}(s_{\out})m_{b_c}(t_{\out})+ \sum_{\substack{a,b\neq c\\a>b}}(a_cb_c)m_{a_c}(s_{\out})m_{b_c}(t_{\out})\\
    &= \sum_{\substack{a,b\neq c\\a<b}}(a + 1|_{a<c})(b+1|_{b<c})m_{a_c}(s_{\out})m_{b_c}(t_{\out}) + \sum_{\substack{a,b\neq c\\a>b}}(a+1|_{c<a})(b+1|_{c<b})m_{a_c}(s_{\out})m_{b_c}(t_{\out})\\
    &= \sum_{\substack{a,b\neq c\\a<b}}(ab+b|_{a<c}+a|_{b<c}+1|_{b<c})m_{a_c}(s_{\out})m_{b_c}(t_{\out})\\
    &\quad+ \sum_{\substack{a,b\neq c\\a>b}}(ab+b|_{c<a}+a|_{c<b}+1|_{c<b})m_{a_c}(s_{\out})m_{b_c}(t_{\out}).
  \end{align*}
  \endgroup
  Adding
  \[
    \sum_{\substack{s,t\in \partial_c\\s<t}}\sum_{\substack{c<a<b\\a<b<c\\b<c<a}}m_{a_c}(s_{\out})m_{b_c}(t_{\out}),
  \]
  we obtain
  \begingroup
  \allowdisplaybreaks
  \begin{align*}
    &\sum_{\substack{a,b\neq c\\a<b}}(ab+b|_{a<c}+a|_{b<c}+1|_{c<a})m_{a_c}(s_{\out})m_{b_c}(t_{\out})\\
    &\quad+ \sum_{\substack{a,b\neq c\\a>b}}(ab+b|_{c<a}+a|_{c<b}+1|_{c<a})m_{a_c}(s_{\out})m_{b_c}(t_{\out})\Bigr)\\
    &=\sum_{a<b}(b|_{a<c} + a|_{b<c} + 1|_{c<a})m_{a_c}(s_{\out})m_{b_c}(t_{\out}) + \sum_{a>b}(b|_{c<a}+a|_{c<b}+1|_{c<a})m_{a_c}(s_{\out})m_{b_c}(t_{\out})\Bigr)\\
    &\quad + \sum_{a<b}ab\bigl(m_{a_c}(s_{\out})m_{b_c}(t_{\out}) + m_{b_c}(s_{\out})m_{a_c}(t_{\out})
  \end{align*}
  \endgroup
  Now sum over all $s,t\in \partial_c(z)$, $s<t$. Adding
  \[
    \sum_{\substack{a,b\neq c\\a<b}}ab\cdot m_{ac}m_{bc},
  \]
  we obtain
  \begingroup
  \allowdisplaybreaks
  \begin{align*}
    &\sum_{\substack{s,t\in \partial_c(z)\\s<t}}\Bigl(\sum_{\substack{a,b\neq c\\a<b}} (b|_{a<c} + a|_{b<c} + 1|_{c<a})m_{a_c}(s_{\out})m_{b_c}(t_{\out})\\
    &\quad+ \sum_{\substack{a,b\neq c\\a>b}} (b|_{c<a}+a|_{c<b}+1|_{c<a})m_{a_c}(s_{\out})m_{b_c}(t_{\out})\Bigr)+ \sum_{s\in \partial_c(z)}\sum_{a<b}ab\cdot m_{a_c}(s_{\out})m_{b_c}(s_{\out})\\
    &=\sum_{\substack{s,t\in \partial_c(z)\\s<t}}\Bigl(\sum_{a<b}(b|_{a<c} + a|_{b<c} + 1|_{c<a})m_{a_c}(s_{\out})m_{b_c}(t_{\out})\\
    &\quad+ \sum_{a>b}(b|_{c<a}+a|_{c<b}+1|_{c<a})m_{a_c}(s_{\out})m_{b_c}(t_{\out})\Bigr)\\
    &\quad + \sum_{s\in \partial_c(z)}\sum_{\substack{a,b\neq c\\a<b}} (b|_{c<a} + a|_{c<b} + 1|_{c<a})m_{a_c}(s_{\out})m_{b_c}(s_{\out})\mod \coboundary, 
  \end{align*}
  \endgroup
  where the last equality is by Lemma \ref{simple coboundary}. Adding
  \[
    \sum_{\substack{s,t\in \partial_c\\s<t}}\sum_{\substack{c<a<b\\a<b<c\\b<c<a}}(a+b)m_{a_c}(s_{\out})m_{b_c}(t_{\out}) + \sum_{\substack{c,\\s\in \partial_c}}\sum_{\substack{c<a<b\\a<b<c}}(a+b)m_{a_c}(s_{\out})m_{b_c}(s_{\out}),
  \]
  we obtain
  \begin{align*}
    &\sum_{\substack{c,\\s,t\in \partial_c(z)\\s<t}}\Bigl(\sum_{a<b}(a|_{c<a} + b|_{c<b} + 1|_{c<a})m_{a_c}(s_{\out})m_{b_c}(t_{\out})\\
    &\quad+ \sum_{a>b}(a|_{c<a}+b|_{c<b}+1|_{c<a})m_{a_c}(s_{\out})m_{b_c}(t_{\out})\Bigr)\\
    &\quad + \sum_{\substack{c,\\s\in \partial_c(z)}}\sum_{\substack{a,b\neq c,\\a<b}} (a|_{c>a} + b|_{c>b} + 1|_{c<a})m_{a_c}(s_{\out})m_{b_c}(s_{\out})\\
    &= \sum_{\substack{c,\\s,t\in \partial_c(z)\\s<t}}\Bigl(\sum_{a,b\neq c}(a|_{c<a} + b|_{c<b})m_{a_c}(s_{\out})m_{b_c}(t_{\out})\Bigr) +  \sum_{\substack{c,\\s\in \partial_c(z)}}\sum_{\substack{a,b\neq c,\\a<b}} (a|_{c>a} + b|_{c>b})m_{a_c}(s_{\out})m_{b_c}(s_{\out})\\
    &\quad +\sum_{\substack{c,\\s,t\in \partial_c(z)\\s<t}}\Bigl(\sum_{\substack{a,b,c\\ \text{disjoint}}}(1|_{c<a})m_{a_c}(s_{\out})m_{b_c}(t_{\out})\Bigr) +  \sum_{\substack{c\\s\in \partial_c(z)}}\sum_{\substack{a,b\neq c,\\a<b}} (1|_{c<a})m_{a_c}(s_{\out})m_{b_c}(s_{\out})\\
  \end{align*}
  Now using Lemma \ref{simple a+b}, we can reverse the inequalities in the above term $a|_{c>a}+b|_{c>b}$ by adding the (identically zero) term in Equation (\ref{simple a+b equation}). From Lemma \ref{iterate through s,a}, the top two sums add to $0$ mod coboundary. So we are left with the bottom two sums, which equal
  \[
    \sum_{\substack{c,\\s,t\in \partial_c(z)\\s<t}}\Bigl(\sum_{\substack{a,b,c\\\text{disjoint,}\\c<a}}m_{a_c}(s_{\out})m_{b_c}(t_{\out})\Bigr) + \sum_{{\substack{c,\\s\in \partial_c(z)}}}\sum_{c<a<b}m_{a_c}(s_{\out})m_{b_c}(t_{\out}).\qedhere
  \]
\end{proof}
Using Proposition \ref{big simplify}, we are able to simplify the difference formula as follows:
\begin{align*}
  &\langle\sq^2(\alpha) - \mathfrak{sq}^2(\alpha),z\rangle \\
  &=  \sum_{a<b<c<d}(a+b+c+d)m_{ac}m_{bd}+ \sum_{\substack{c,\\s,t\in \partial_c\\s<t}}\sum_{\substack{c<a<b\\a<b<c\\b<c<a}}(a+b)m_{a_c}(s_{\out})m_{b_c}(t_{\out})\\
  &+ \sum_{\substack{c,\\s\in \partial_c}}\sum_{\substack{c<a<b\\a<b<c}}(a+b)m_{a_c}(s_{\out})m_{b_c}(s_{\out}) + \sum_{\substack{c,\\s\in \partial_c}}c\frac{m(s_{\out})}{2} + \sum_{a<b}\binom{m_{ab}}{2}\\
  &+ \sum_{\substack{s,t\in \partial_c\\s<t}}\Bigl(\sum_{\substack{a,b,c\\\text{disjoint,}\\c<a}}m_{a_c}(s_{\out})m_{b_c}(t_{\out})\Bigr) + \sum_{{\substack{c,\\s\in \partial_c}}}\sum_{c<a<b}m_{a_c}(s_{\out})m_{b_c}(t_{\out}) \mod \coboundary.
\end{align*}
We simplify the last three terms in this expression using the following lemma:
\begin{lemma}\label{c<a simplify}
  \begin{equation}\label{c<a}
  \begin{aligned}
    &\sum_{a<b}\binom{m_{ab}}{2} + \sum_{\substack{c,\\s,t\in \partial_c\\s<t}}\Bigl(\sum_{\substack{a,b,c\\\text{disjoint,}\\c<a}}m_{a_c}(s_{\out})m_{b_c}(t_{\out})\Bigr) + \sum_{{\substack{c,\\s\in \partial_c}}}\sum_{c<a<b}m_{a_c}(s_{\out})m_{b_c}(t_{\out})\\
    &= \sum_{\substack{c,\\s\in \partial_c}}\binom{\sum_{c<a} m_{a_c}(s_{\out})}{2}\pmod 2.
  \end{aligned}
  \end{equation}
\end{lemma}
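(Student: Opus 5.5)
The plan is to expand the right-hand side of (\ref{c<a}) with the elementary identity
\[
\binom{\sum_i x_i}{2} = \sum_i \binom{x_i}{2} + \sum_{i<j} x_ix_j ,
\]
applied for each pair $(c,s)$ with $s\in\partial_c$, to the family $x_a = m_{a_c}(s_{\out})$ indexed by $a>c$. This gives
\[
\sum_{\substack{c,\\ s\in\partial_c}}\binom{\sum_{c<a} m_{a_c}(s_{\out})}{2}
= \sum_{\substack{c,\\ s\in\partial_c}}\sum_{c<a}\binom{m_{a_c}(s_{\out})}{2} + \sum_{\substack{c,\\ s\in\partial_c}}\sum_{c<a<b} m_{a_c}(s_{\out})m_{b_c}(s_{\out}).
\]
The second sum is exactly the third term on the left of (\ref{c<a}), reading the stray $t_{\out}$ there as $s_{\out}$, which is clearly the intended meaning since no $t$ is quantified. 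So it suffices to prove
\[
\sum_{a<b}\binom{m_{ab}}{2} + \sum_{\substack{c,\\ s,t\in\partial_c\\ s<t}}\ \sum_{\substack{a,b,c\ \text{disjoint}\\ c<a}} m_{a_c}(s_{\out})m_{b_c}(t_{\out}) = \sum_{\substack{c,\\ s\in\partial_c}}\sum_{c<a}\binom{m_{a_c}(s_{\out})}{2}.
\]

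To do this I would decompose each $\binom{m_{ab}}{2}$ along the left-break factorization. Fix $a<b$. Every element of $\partial_{ab}(z,\alpha)$ factors uniquely as $p\circ q$ with $q\in\partial_a(z)$ and $p\in\partial_{(b)_a}(q_{\out},\alpha)$, using $\partial_{ab}\cong\partial_{b_a}\circ\partial_a$. Hence
\[
m_{ab} = \sum_{q\in\partial_a} m_{b_a}(q_{\out}),
\qquad
\binom{m_{ab}}{2} = \sum_{q\in\partial_a}\binom{m_{b_a}(q_{\out})}{2} + \sum_{\substack{q,q'\in\partial_a\\ q<q'}} m_{b_a}(q_{\out})\,m_{b_a}(q'_{\out}).
\]
Now rename $a\to c$ and $b\to a$, so that $c<a$. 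Summed over all pairs, the first part becomes exactly $\sum_{c,\,s\in\partial_c}\sum_{c<a}\binom{m_{a_c}(s_{\out})}{2}$, the right-hand side above. The cross terms become
\[
\sum_{c}\sum_{\substack{s,t\in\partial_c\\ s<t}}\sum_{c<a} m_{a_c}(s_{\out})m_{a_c}(t_{\out}),
\]
which is the diagonal part ($a=b$) of the sum over $s<t$ with $c<a$.

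The lemma therefore reduces to showing that, modulo $2$,
\[
\sum_{c}\sum_{\substack{s,t\in\partial_c\\ s<t}}\Bigl(\sum_{c<a} m_{a_c}(s_{\out})m_{a_c}(t_{\out}) + \sum_{\substack{a,b,c\ \text{disjoint}\\ c<a}} m_{a_c}(s_{\out})m_{b_c}(t_{\out})\Bigr) = 0 .
\]
Combining the two inner sums gives $\sum_{c<a}\sum_{b\neq c} m_{a_c}(s_{\out})m_{b_c}(t_{\out}) = \bigl(\sum_{c<a} m_{a_c}(s_{\out})\bigr)\, m(t_{\out})$, where $m(y)=\sum_b m_b(y)=\#\partial(y,\alpha)$. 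Since $\alpha$ is a cocycle, $m(t_{\out})$ is even for every $y=t_{\out}\in X_{n+1}$, so each summand vanishes mod $2$ and the claim follows.

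The main obstacle is bookkeeping rather than any real idea. First, the left-break factorization $\partial_{ab}\cong\partial_{b_a}\circ\partial_a$ must be consistent with the indices $m_{a_c}$ used in the statement, with $c$ the smaller index so that $a_c=a-1$. Second, the ordering $s<t$ on $\partial_c$ has to match the ordering used in the cross terms, but since only unordered pairs matter mod $2$ this is harmless. Third, one must confirm that the cocycle condition gives evenness of $m(y)$ for each $y\in X_{n+1}$; this is the defining property of a cocycle over $\mathbb{F}_2$.
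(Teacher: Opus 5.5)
Your proof is correct and is essentially the paper's argument. The paper also splits $\binom{m_{ab}}{2}$ according to whether two elements of $\partial_{ab}$ share the same first face in $\partial_c$. It then recombines the same-face part with the third term into $\sum_{c,\,s\in\partial_c}\binom{\sum_{c<a}m_{a_c}(s_{\out})}{2}$, and kills the leftover cross terms together with the disjoint term via $\sum_{b} m_{b_c}(t_{\out}) = \#\partial(t_{\out},\alpha)\equiv 0 \pmod 2$. You run the binomial identity in the other direction (expanding the right-hand side instead of assembling it). Your reading of the stray $t_{\out}$ as $s_{\out}$, and your choice to factor through the smaller index $c<a$, are the right ways to resolve the paper's slightly inconsistent indexing.
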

\begin{proof}
  First note that $\binom{m_{ab}}{2}$ counts pairs in $\partial_{ab}$. The following quantity also counts pairs in $\partial_{ab}$:
  \[
    \sum_{\substack{s,t\in \partial_b\\s<t}}m_a(s_{\out})m_a(t_{\out}) + \sum_{s\in \partial_b}\binom{m_a(s_{\out})}{2}.
  \]
  Summing over all $a<b$, we have the identity
  \[
    \sum_{a<b}\binom{m_{ab}}{2} = \sum_{\substack{s,t\in \partial_b\\s<t}}\sum_{a<b}m_a(s_{\out})m_a(t_{\out}) + \sum_{s\in \partial_b}\binom{m_a(s_{\out})}{2}.
  \]
  The left hand side of our equation of interest (\ref{c<a}) thus equals
  \begin{align*}
    &\sum_{\substack{c,\\s,t\in \partial_c\\s<t}}\Bigl(\sum_{\substack{a,b,c\\\text{disjoint,}\\c<a}}m_{a_c}(s_{\out})m_{b_c}(t_{\out})\Bigr) + \sum_{{\substack{c,\\s\in \partial_c}}}\sum_{c<a<b}m_{a_c}(s_{\out})m_{b_c}(t_{\out})\\
    &\quad+ \sum_{\substack{c,\\s,t\in \partial_c\\s<t}}\sum_{c<a}m_{a_c}(s_{\out})m_{a_c}(t_{\out}) + \sum_{\substack{c,\\s\in \partial_c}}\binom{m_a(s_{\out})}{2} \\
    &= \sum_{\substack{c,\\s,t\in \partial_c\\s<t}}\sum_{\substack{a,b\\c<a}}m_{a_c}(s_{\out})m_{b_c}(s_{\out}) + \sum_{{\substack{c,\\s\in \partial_c}}}\binom{\sum_{c<a}m_{a_c}(s_{\out})}{2}\\
    &=\sum_{\substack{c,\\s,t\in \partial_c\\s<t}}\Bigl(\sum_{c<a}m_{a_c}(s_{\out})\underbrace{\sum_b m_{b_c}(s_{\out})}_{=0}\Bigr) +  \sum_{{\substack{c,\\s\in \partial_c}}}\binom{\sum_{c<a}m_{a_c}(s_{\out})}{2}.\qedhere
  \end{align*}
\end{proof}
\begin{proof}[Proof of Theorem \ref{main theorem}]
We obtain from Lemma \ref{c<a simplify} the following simplification:
\begin{equation}
  \begin{aligned}\label{almost done}
  &\langle\sq^2(\alpha) - \mathfrak{sq}^2(\alpha),z\rangle \\
  &=  \sum_{a<b<c<d}(a+b+c+d)m_{ac}m_{bd}+ \sum_{\substack{s,t\in \partial_c\\s<t}}\sum_{\substack{c<a<b\\a<b<c\\b<c<a}}(a+b)m_{a_c}(s_{\out})m_{b_c}(t_{\out})\\
  &+ \sum_{\substack{c,\\s\in \partial_c}}\sum_{\substack{c<a<b\\a<b<c}}(a+b)m_{a_c}(s_{\out})m_{b_c}(s_{\out}) + \sum_{\substack{c,\\s\in \partial_c}}c\frac{m(s_{\out})}{2} +  \sum_{\substack{c,\\t\in \partial_c}}\binom{\sum_{c<a} m_{a_c}(t_{\out})}{2}
    \end{aligned}
\end{equation}
mod coboundary. By Lemma \ref{weighted cross counting}, the right hand side further simplifies to
  \[
    \sum_{\substack{c,\\t\in \partial_c}}\sum_{\substack{a\neq c,\\s\in \partial_{a_c}(t_{\out},\alpha)}}\overleftarrow{\order}(s)\cdot a + \sum_{\substack{c,\\t\in \partial_c}}\binom{\sum_{c<a} m_{a_c}(t_{\out})}{2}\pmod 2.
  \]
  The second summand is equal to the quantity
  \[
    \sum_{\substack{c,\\t\in \partial_c}}\sum_{\substack{a>c\\s\in \partial_{a_c}(t_{\out},\alpha)}}\overleftarrow{\order}(s),
  \]
  simply from the fact that $\binom{k}{2} = 0+1+2+\ldots + (k-1)$. We are left with
  \begin{align*}
    \langle\sq^2(\alpha) - \mathfrak{sq}^2(\alpha),z\rangle
    &= \sum_{\substack{c,\\t\in \partial_c}}\sum_{\substack{a\neq c,\\s\in \partial_{a_c}(t_{\out},\alpha)}}\overleftarrow{\order}(s)\cdot a + \sum_{\substack{c,\\t\in \partial_c}}\sum_{\substack{c<a\\s\in \partial_{a_c}(t_{\out},\alpha)}}\overleftarrow{\order}(s)\\
    &= \sum_{\substack{c,\\t\in \partial_c}}\sum_{\substack{a\neq c,\\s\in \partial_{a_c}(t_{\out},\alpha)}}\overleftarrow{\order}(s)\cdot a_c\\
    &= \sum_{\substack{c,\\t\in \partial_c}}\sum_{\substack{a,\\s\in \partial_{a}(t_{\out},\alpha)}}\overleftarrow{\order}(s)\cdot a = 0\mod\coboundary.\qedhere    
  \end{align*}
\end{proof}
\begin{proof}[Proof of Corollary \ref{main corollary}]
  The corollary follows by applying Theorem \ref{main theorem} to the functor $F = F_{Kh}(L)$.
\end{proof}

\appendix
\section{Basic cochain identities}\label{appendix}
We fix an element $z\in X_{n+2}$ and a cocycle $\alpha\in C^n(X_\bullet;\mathbb{F}_2)$ in this section.
\begin{lemma}\label{m_ab count}
  Fix an $a\in \mathbb{N}$. We have the following identity:
  \[
    \sum_{\substack{b,\\b\neq a}}m_{ab}(z,\alpha) = 0\pmod 2.
  \]
  
\end{lemma}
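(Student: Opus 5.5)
We will fix $a$ and reorganize the count $\sum_{b\neq a} m_{ab}(z,\alpha)$ by factoring each span $\partial_{ab}$ through the face $\partial_a$. By the isomorphism of spans $\partial^{n+1}_{b_a}\circ\partial^{n+2}_a\cong\partial^{n+2}_{ab}$ recorded in the notation for $a_b$, every element of $\partial_{ab}(z,\alpha)$ corresponds uniquely to a pair $(q,p)$ with $q\in\partial_a^{n+2}(z,X_{n+1})$ and $p\in\partial^{n+1}_{b_a}(q_{\out},\alpha)$. Summing over $q$ first gives
\[
  m_{ab}(z,\alpha)=\sum_{q\in\partial_a(z,X_{n+1})} m_{b_a}(q_{\out},\alpha).
\]

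Next we sum over $b\neq a$. As $b$ ranges over $\{0,\ldots,n+2\}\setminus\{a\}$, the index $b_a$ ranges bijectively over $\{0,\ldots,n+1\}$: it equals $b$ for $b<a$ and $b-1$ for $b>a$. Interchanging the finite sums gives
\[
  \sum_{b\neq a}m_{ab}(z,\alpha)=\sum_{q\in\partial_a(z,X_{n+1})}\ \sum_{c=0}^{n+1}m_c(q_{\out},\alpha).
\]
For each $y=q_{\out}\in X_{n+1}$, the inner sum $\sum_c m_c(y,\alpha)$ is the number of elements of the total boundary span $\partial^{n+1}=\bigsqcup_c\partial_c^{n+1}$ from $y$ into $\alpha$. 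Mod $2$, this is exactly $\langle\delta\alpha,y\rangle$, because the coboundary on $C^*(X_\bullet;\mathbb{F}_2)$ is the $\mathbb{F}_2$-linearization of $\sum_c\partial_c$ (signs are irrelevant over $\mathbb{F}_2$). Since $\alpha$ is a cocycle, each inner sum is even, and therefore the whole sum is $0\pmod 2$.

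The only step needing care is the span identification. We must check that composing spans $\partial_{b_a}^{n+1}\circ\partial_a^{n+2}$ and restricting to source $z$ and target in $\alpha$ gives a bijection onto $\partial_{ab}(z,\alpha)$. This follows from the definition of $\Lambda(F)$: the 2-cell $X(\partial_{b_a},\partial_a)$ is a bijection of span sets, namely the component $F(C\subset B,B\subset A)$ of the Burnside functor. We also check the index bookkeeping $\delta_a\circ\delta_{b_a}=\delta_{\{a,b\}}$ for face maps. Neither point should pose a real obstacle.
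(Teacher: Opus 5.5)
Your proposal is correct and is essentially the paper's own proof. Like the paper, you factor $\partial_{ab}(z,\alpha)$ through $\partial_a(z)$ via $\partial_{ab}\cong\partial_{b_a}\circ\partial_a$ and reindex $b\mapsto b_a$, reducing the sum to $\sum_{s\in\partial_a(z)}\#\partial(s_{\out},\alpha)$, which vanishes mod $2$ because $\alpha$ is a cocycle.
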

\begin{proof}
   \[
    \sum_{\substack{b,\\b\neq a}}m_{ab}(z,\alpha) = \sum_{s\in \partial_a(z)}\sum_{b\neq a}\#\partial_{b_a}(s_{\out},\alpha) = \sum_{s\in \partial_a(z)}\#\partial(s_{\out},\alpha) =  \sum_{s\in \partial_a(z)} 0\pmod 2.\qedhere
  \]
\end{proof}

\begin{proposition}[compare \cite{rajapakse2025}]\label{a sum}
  Define $L: C^*(X_\bullet;\mathbb{F}_2)\to C^{*+2}(X_\bullet;\mathbf{F}_2)$ by
  \[
    \langle L\omega, z \rangle = \sum_{a<b}b m_{ab}(z,\omega).
  \]
  The linear map $L$ is nullhomotopic. In particular,
  \[
    \sum_{a<b}bm_{ab}(z) = 0 \mod\coboundary
  \]
\end{proposition}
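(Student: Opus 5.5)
We will exhibit an explicit chain homotopy $H: C^*(X_\bullet;\mathbb{F}_2)\to C^{*+1}(X_\bullet;\mathbb{F}_2)$ with $L = \delta H + H\delta$. This proves the first claim. The second claim then follows by applying it to the cocycle $\alpha$: since $\delta\alpha = 0$, we get $L\alpha = \delta(H\alpha)$, a coboundary. The natural candidate is the weighted face map
\[
  \langle H\omega, y\rangle = \sum_a \binom{a+1}{2}\, m_a(y,\omega) \pmod 2,
\]
or a similar weight $w(a)$, for $y\in X_{n+1}$. The weight is to be chosen so that differences of weights across a commuting square of face maps produce the coefficient $b$. Alternatively, one could take the weight $a$ and correct it afterwards; we will determine $w$ by the computation below.

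First compute $\langle \delta H\omega + H\delta\omega, z\rangle$ for $z\in X_{n+2}$. Expand
\[
  \langle H\delta \omega, z\rangle = \sum_c w(c)\sum_{s\in\partial_c(z)} \sum_{d} m_d(s_{\out},\omega)
\]
and
\[
  \langle \delta H\omega, z\rangle = \sum_{c}\sum_{s\in \partial_c(z)}\sum_d w(d)\, m_d(s_{\out},\omega).
\]
Group the terms by the two-element face set, using the span isomorphisms $\partial_{a_b}\circ\partial_b\cong\partial_{ab}\cong\partial_{b_a}\circ\partial_a$ from the notation for $a_b$. Each element of $\partial_{ab}(z,\omega)$ with $a<b$ appears exactly twice: once as a composite through $\partial_a$ followed by $\partial_{b-1}$, and once through $\partial_b$ followed by $\partial_a$. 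Collecting the weights, the total coefficient of $m_{ab}(z,\omega)$ is
\[
  w(a)+w(b-1)+w(b)+w(a) \;+\; \bigl(\text{possibly a term } w(a)+w(b)\bigr),
\]
depending on exactly how $\delta$ and $H$ are combined. The key step is to check, with the correct bookkeeping, that the coefficient reduces to $w(b)+w(b-1)$ plus terms symmetric in the two factorizations, which cancel mod $2$. Choosing $w(k)=\binom{k+1}{2}$ gives $w(b)-w(b-1)=b$, which is the required coefficient.

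The main obstacle is this bookkeeping of indices. Faces $\partial_c$ with $c$ on either side of $a$ shift indices differently, so one must verify that the weights attached to the two factorizations are exactly $(w(a), w(b-1))$ and $(w(b), w(a))$. The $w(a)$ contributions appear twice and cancel mod $2$, which leaves $w(b-1)+w(b)\equiv b$. If a cross term between $\delta H$ and $H\delta$ fails to cancel, we fall back on the paper's earlier tools. Lemma \ref{m_ab count} gives $\sum_{b\neq a}m_{ab}=0$, so any leftover term of the form $\sum_{a<b} f(a) m_{ab}$ can be traded for $\sum_{a<b}f(b)m_{ab}$. We would then absorb the discrepancy by adjusting $w$, for example by adding a multiple of $a$, and possibly by comparing with the argument of \cite{rajapakse2025}.
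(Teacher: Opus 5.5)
Your proposal is correct and is essentially the paper's proof. It uses the same homotopy $\langle H\omega,y\rangle=\sum_a\binom{a+1}{2}m_a(y,\omega)$, and it counts each element of $\partial_{ab}(z,\omega)$ through its two factorizations $\partial_{b-1}\circ\partial_a$ and $\partial_a\circ\partial_b$, giving total weight $\binom{a+1}{2}+\binom{b}{2}+\binom{b+1}{2}+\binom{a+1}{2}\equiv b \pmod 2$. Your hedges are unnecessary: your two expansions already show that each composable pair $(s,u)$ with $s\in\partial_c(z)$ and $u\in\partial_d(s_{\out},\omega)$ receives weight exactly $w(c)+w(d)$, so no extra cross term arises and no fallback is needed.
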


  \begin{proof}
    Define a homotopy $H: C^*(X_\bullet)\to C^*(X_\bullet)^{*+1}$ by
    \[
      \langle H\omega,y \rangle = \sum_{a} \binom{a+1}{2}m_a(y,\omega)
    \]
    We have $\langle\delta Hx + H\delta x, z\rangle$ contributing four terms per element $s\in \partial_{ab}(z,x)$, as seen in Figure \ref{homotopy}.
    \def\T{0.3cm}
    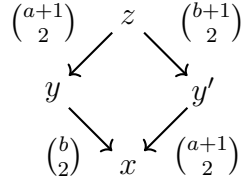
\begin{figure}
    \begin{tikzpicture}[scale=1]
  \path[draw, <-, shorten <=\T,shorten >=\T, thick] (0,1) -- (1,2) node [midway, label={[label distance=-0.2cm]135:$\binom{a+1}{2}$}] {};
  \path[draw, <-, shorten <=\T,shorten >=\T, thick] (2,1) -- (1,2) node [midway, label={[label distance=-0.2cm]45:$\binom{b+1}{2}$}] {};
  \path[draw, <-, shorten <=\T,shorten >=\T, thick] (1,0) -- (2,1) node [midway, label={[label distance=-0.3cm]-45:$\binom{a+1}{2}$}] {};
  \path[draw, <-, shorten <=\T,shorten >=\T, thick] (1,0) -- (0,1) node [midway, label={[label distance=-0.3cm]225:$\binom{b}{2}$}] {};
     \node at (1, 2) {$z$};
   \node at (0, 1) {$y$};
   \node at (2, 1) {$y'$};
   \node at (1, 0) {$x$};
 \end{tikzpicture}
 \caption{We can write a span element $s:z\to x$ either as a composition in $\partial_b$.}
 \label{homotopy}
 \end{figure}
    \[
      \langle \delta Hx + H\delta x, z\rangle = \sum_{s\in \partial(z,x)}\Bigl(\binom{b}{2} + \binom{a+1}{2} + \binom{b+1}{2} + \binom{a+1}{2}\Bigr) = \sum_{s\in \partial(z,x)} b\mod 2,
    \]
    which is equal to $\langle Lx, z\rangle$.
  \end{proof}
  \begin{lemma}\label{iterate through s,a}
  \begin{align*}
    &\sum_{\substack{s,t\in \partial_c\\s<t}} \sum_{\substack{a,b,c\\ \text{disjoint}}}(a|_{c<a}+b|_{c<b})m_{a_c}(s_{\out})m_{b_c}(t_{\out}) + \sum_{\substack{c,\\s\in \partial_c}}\sum_{\substack{a,b\neq c\\a<b}}(a|_{c< a}+b|_{c< b})m_{a_c}(s_{\out})m_{b_c}(s_{\out}) = 0\\
    &\mod \coboundary.
  \end{align*}  
\end{lemma}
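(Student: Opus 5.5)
The plan is to combine the two sums into a single expression indexed by one element $s\in\partial_c(z)$ and then recognize what remains as the image of a nullhomotopic operator, in the spirit of Proposition~\ref{a sum}. Fix $c$ and the weight $w_c(a):=a\cdot 1|_{c<a}$ for $a\neq c$. The integrand is $\bigl(w_c(a)+w_c(b)\bigr)m_{a_c}(\cdot)m_{b_c}(\cdot)$, which is symmetric in $a,b$.

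\textbf{Step 1: split the sum.} Expanding by bilinearity, the first sum becomes
\[
\sum_{s<t}\Bigl(\sum_a w_c(a)m_{a_c}(s_{\out})\Bigr)\Bigl(\sum_{b}m_{b_c}(t_{\out})\Bigr)+\sum_{s<t}\Bigl(\sum_a m_{a_c}(s_{\out})\Bigr)\Bigl(\sum_b w_c(b)m_{b_c}(t_{\out})\Bigr),
\]
minus the diagonal terms $a=b$. The diagonal terms vanish because the weight there is $2w_c(a)=0$. Since $\sum_b m_{b_c}(y)=\#\partial(y,\alpha)=0\pmod 2$ for any $y\in X_{n+1}$ (as $\alpha$ is a cocycle, exactly as in Lemma~\ref{m_ab count}), both products vanish mod $2$. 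Hence the first sum is $0$.

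\textbf{Step 2: the single-$s$ sum.} For fixed $s$, write $\mu_a=m_{a_c}(s_{\out})$. The second sum is $\sum_{a<b}(w(a)+w(b))\mu_a\mu_b$. Using $\sum_{a\neq b}\mu_a\mu_b=(\sum\mu)^2-\sum\mu^2$, this is $\sum_a w(a)\mu_a(M-\mu_a)$ with $M=\sum_b\mu_b$ even, so it reduces to $\sum_a w(a)\mu_a(M+\mu_a)\equiv\sum_a w(a)\,\mu_a(\mu_a-1)=2\sum_a w(a)\binom{\mu_a}{2}$ over the integers. Parity care is needed here: I would redo this computation over $\mathbb{Z}$, dividing the symmetric double sum by $2$, to get
\[
\sum_{a<b}(w(a)+w(b))\mu_a\mu_b=\tfrac12\Bigl(M\sum_a w(a)\mu_a-\sum_a w(a)\mu_a^2\Bigr)\cdot 2.
\]
Since the weighted sum may be odd, the honest route is to track the term $\frac{M}{2}\sum_a w(a)\mu_a$. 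The expected outcome is a sum of the form $\sum_{c,s\in\partial_c}\sum_{a>c} a\,m_{a_c}(s_{\out})\cdot(\text{a parity of } \#\partial(s_{\out},\alpha)/2)$, plus terms of the form $\sum a\binom{\mu_a}{2}$.

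\textbf{Step 3: coboundary.} I would then reindex the surviving terms through $\partial_{a_c}\circ\partial_c\cong\partial_{ac}$, turning them into expressions in $m_{ac}(z)$ weighted by $a\cdot 1|_{c<a}$. That is precisely $\sum_{c<a}a\,m_{ca}$, which is $0$ mod coboundary by Proposition~\ref{a sum}. The $\binom{\mu}{2}$ pieces should either cancel against each other by the symmetry of $\partial_{ac}$ viewed from both breaks, or be absorbed via a homotopy analogous to $H$ with weights $\binom{a+1}{2}$.

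The main obstacle is Step 2's parity bookkeeping. Halving $M$ introduces the term $\#\partial(s_{\out},\alpha)/2$, which is not obviously a coboundary. I would try to show that it pairs with the single-$s$ term from the first sum before the cocycle cancellation, handling the two sums jointly rather than separately, and then build an explicit homotopy $H'$ with $\langle H'\omega,y\rangle=\sum_a \binom{a}{2}\binom{m_a(y,\omega)}{2}$-type weights if a residue remains.
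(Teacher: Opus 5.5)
Your Step 1 is fine: after restoring the diagonal terms $a=b$ (which carry weight $2w_c(a)$), the first sum factors through $\sum_b m_{b_c}(t_{\out})=\#\partial(t_{\out},\alpha)\equiv 0$ and vanishes mod $2$. The gap is in Step 2. For fixed $s$, the identity
\[
\sum_{a<b}\bigl(w(a)+w(b)\bigr)\mu_a\mu_b=\sum_{a\neq b}w(a)\mu_a\mu_b=\sum_a w(a)\mu_a(M-\mu_a)
\]
holds exactly over $\mathbb{Z}$, so no halving ever occurs. Since $M$ is even, reducing mod $2$ gives $\sum_a w(a)\mu_a^2\equiv\sum_a w(a)\mu_a$. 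Your congruence $\sum_a w(a)\mu_a(M+\mu_a)\equiv\sum_a w(a)\mu_a(\mu_a-1)$ is false, because the right side is always even. For instance, take two indices $a_1,a_2$ with $\mu_{a_1}=\mu_{a_2}=1$, all other $\mu$ zero, $w(a_1)$ odd and $w(a_2)$ even; then the single-$s$ sum equals $w(a_1)+w(a_2)$, which is odd. Your later concerns about a term $\frac{M}{2}\sum_a w(a)\mu_a$, a factor $\#\partial(s_{\out},\alpha)/2$, and residual $\binom{\mu_a}{2}$ pieces all stem from this slip. As written, the proposal ends at an unresolved ``main obstacle'' with a speculative homotopy $H'$ instead of a proof.

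With the correct reduction, the second sum is $\sum_c\sum_{s\in\partial_c}\sum_{a>c}a\,m_{a_c}(s_{\out})$. By $\partial_{a_c}\circ\partial_c\cong\partial_{ac}$ this equals $\sum_{c<a}a\,m_{ca}(z,\alpha)$, which is $0$ mod coboundary by Proposition~\ref{a sum}. That is precisely the first sentence of your Step 3, and nothing else is needed. The repaired argument is then the paper's proof split into two pieces. The paper adds the coboundary $\sum_{c,\,s\in\partial_c}\sum_b b|_{c<b}\,m_{b_c}(s_{\out})$ to the left-hand side and observes that the total is the single product
\[
\sum_c\Bigl(\sum_{s\in\partial_c}\sum_{a\neq c}a|_{c<a}m_{a_c}(s_{\out})\Bigr)\Bigl(\sum_{t\in\partial_c}\sum_{b\neq c}m_{b_c}(t_{\out})\Bigr),
\]
whose second factor is even. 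In that product, the ordered pairs with $s\neq t$ produce your first sum, and the pairs with $s=t$ produce your second sum plus the added correction term.
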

\begin{proof}
  If we add the term
  \[
    \sum_{{\substack{c,\\s\in \partial_c}} }\sum_{b}b|_{c<b}m_{b_c}(s_{\out}),
  \]
  which is $0$ mod coboundary (see Proposition \ref{a sum}), to the left hand side, we obtain
  \begin{align*}
    &\sum_c\Bigl(\sum_{{\substack{c,\\s\in \partial_c}}(z)}\sum_{a\neq c} a|_{c< a}m_{a_c}(s_{\out})\Bigr)\Bigl(\sum_{t\in \partial_c(z)}\underbrace{\sum_{b\neq c} m_{b_c}(t_{\out})}_{=0}\Bigr) = 0.\qedhere
  \end{align*}
\end{proof}
\begin{lemma}\label{simple coboundary}
  \begin{equation}\label{ab sum}
    \sum_{\substack{c,\\s\in \partial_c(z)}}\sum_{\substack{a,b\neq c\\a<b}}ab\cdot m_{a_c}(s_{\out})m_{b_c}(s_{\out}) = \sum_{\substack{c,\\s\in \partial_c(z)}}\sum_{\substack{a,b\neq c\\a<b}} (b|_{c<a} + a|_{c<b} + 1|_{c<a})m_{a_c}m_{b_c}
  \end{equation}
\end{lemma}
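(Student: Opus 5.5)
The plan is to reduce the left-hand side of (\ref{ab sum}) to the right-hand side plus a term of the form $\sum_{c}\sum_{s\in \partial_c(z)} g(s_{\out})$, where $g$ is a function of the target alone, and then to recognise that term as a coboundary. I read the identity as holding mod coboundary, which is how it is invoked in the proof of Proposition \ref{big simplify}.

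First, fix $c$ and $s\in\partial_c(z)$, and put $y=s_{\out}\in X_{n+1}$. The only place $a$ and $b$ enter the summand, apart from the coefficient, is through $m_{a_c}(y)$ and $m_{b_c}(y)$. So I will rewrite the coefficient $ab$ in terms of the shifted indices, using $a = a_c + 1|_{c<a}$ and $b = b_c+1|_{c<b}$. Expanding the product gives
\[
ab = a_cb_c + a_c\,1|_{c<b} + b_c\,1|_{c<a} + 1|_{c<a}1|_{c<b}.
\]
Since $a<b$, the condition $c<a$ implies $c<b$, so the last term is $1|_{c<a}$. Substituting back $a_c = a - 1|_{c<a}$ and $b_c = b-1|_{c<b}$ in the two middle terms gives
\[
a_c\,1|_{c<b} = a\,1|_{c<b} + 1|_{c<a}, \qquad b_c\,1|_{c<a} = b\,1|_{c<a} + 1|_{c<a},
\]
again using $1|_{c<a}1|_{c<b}=1|_{c<a}$. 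Collecting terms mod $2$, the three copies of $1|_{c<a}$ reduce to one, and
\[
ab \equiv a_cb_c + b\,1|_{c<a} + a\,1|_{c<b} + 1|_{c<a} \pmod 2 .
\]
Summed against $m_{a_c}(y)m_{b_c}(y)$, the last three terms are exactly the right-hand side of (\ref{ab sum}).

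It then remains to show that
\[
\sum_{c}\sum_{s\in\partial_c(z)}\ \sum_{\substack{a,b\neq c\\a<b}} a_cb_c\, m_{a_c}(s_{\out})m_{b_c}(s_{\out})
\]
vanishes mod coboundary. The map $a\mapsto a_c$ is an order-preserving bijection from $\{a\neq c\}$ onto the face indices of $X_{n+1}$, so the inner sum equals $g(s_{\out})$, where
\[
g(y)=\sum_{a'<b'} a'b'\, m_{a'}(y)m_{b'}(y).
\]
This $g$ depends only on $y$. Regarding $g$ as a cochain in $C^{n+1}(X_\bullet;\mathbb{F}_2)$, we have $\langle\delta g,z\rangle = \sum_c\sum_{s\in\partial_c(z)}\langle g,s_{\out}\rangle$, because over $\mathbb{F}_2$ the coboundary is dual to $\mathcal{A}_{\mathbb{F}_2}(\sum_c\partial_c)$. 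Hence the leftover term is $\delta g$, which proves the identity mod coboundary.

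The only real obstacle I expect is bookkeeping. The index shift must be applied consistently, and the case analysis $c<a$, $a<c<b$, $b<c$ must collapse correctly. I will sanity-check this by evaluating the coefficient identity separately in each of the three orderings of $c$ relative to $a<b$. No sign issues arise, since everything is over $\mathbb{F}_2$.
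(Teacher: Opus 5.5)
Your proposal is correct and follows the paper's proof. The paper likewise uses the coefficient identity $ab \equiv a_cb_c + b|_{c<a} + a|_{c<b} + 1|_{c<a} \pmod 2$ and notes that the leftover $a_cb_c$ term reindexes to $\sum_{c,\,s\in\partial_c(z)}\sum_{a<b}ab\,m_a(s_{\out})m_b(s_{\out})$, which is $0$ mod coboundary. You make both steps explicit, exhibiting that term as $\delta g$ for $g(y)=\sum_{a'<b'}a'b'\,m_{a'}(y)m_{b'}(y)$, and you are right that the statement is meant mod coboundary.
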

\begin{proof}
  Note first that $ab = a_cb_c + b|_{c<a} + a|_{c<b} + 1|_{c<a}$. Our lemma is complete once we observe the difference of the two terms in Equation (\ref{ab sum}) is
  \[
    \sum_{\substack{c,\\s\in \partial_c(z)}}\sum_{\substack{a,b\neq c\\a<b}}a_cb_c\cdot m_{a_c}(s_{\out})m_{b_c}(s_{\out})=\sum_{\substack{c,\\s\in \partial_c(z)}}\sum_{a<b}ab\cdot m_{a}(s_{\out})m_{b}(s_{\out}),
  \]
  and the final term is $0$ mod coboundary.
\end{proof}

\begin{lemma}\label{fa+fb}
  Let $f:\mathbb{Z}\to \mathbb{Z}$ be any function. We have
  \[
    \sum_{a<b}(f(a)+f(b))m_{ab}= 0 \pmod 2.
  \]
  
\end{lemma}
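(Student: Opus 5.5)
The idea is to split the sum according to the two endpoints $a<b$ and use Lemma \ref{m_ab count} twice. Working mod $2$, we have
\[
  \sum_{a<b}(f(a)+f(b))m_{ab} = \sum_{a<b}f(a)m_{ab} + \sum_{a<b}f(b)m_{ab}.
\]
In the first sum, $f(a)$ is attached to the smaller index of the pair, and in the second sum $f(b)$ is attached to the larger index. Renaming the variables in the second sum and using the symmetry $m_{ab}=m_{ba}$, the two sums combine into a single sum over ordered pairs:
\[
  \sum_{a}f(a)\sum_{b\neq a}m_{ab}.
\]
Here $m_{ab}$ for $a>b$ simply means $m_{\{a,b\}}$. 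This is consistent with the paper's convention of writing $\{a,b\}=ab$, since $m_{ab}$ depends only on the unordered pair. So the regrouping is just the observation that each unordered pair $\{a,b\}$ contributes $f(a)m_{ab}$ once and $f(b)m_{ab}$ once.

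Lemma \ref{m_ab count} says that, for each fixed $a$, the inner sum $\sum_{b\neq a}m_{ab}(z,\alpha)$ is even. Hence every term of the outer sum is $f(a)$ times an even number, and the whole expression vanishes mod $2$. No coboundary correction is needed, since the identity holds on the nose mod $2$ for each $z$.

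The only point needing care is the symmetry of $m_{ab}$ in its two indices. This is built into the definition, because $\partial_{ab}$ is indexed by the subset $\{a,b\}\subseteq[n+2]$. The sums are also finite, with indices ranging over $[n+2]$, so the regrouping is legitimate. I do not expect any genuine obstacle: the lemma is a direct corollary of Lemma \ref{m_ab count}, which itself rests on $\alpha$ being a cocycle.
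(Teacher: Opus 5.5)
Your proposal is correct and is the same argument as the paper's proof. Both regroup the sum as $\sum_a f(a)\sum_{b\neq a} m_{ab}$, using that $m_{ab}$ depends only on $\{a,b\}$, and then apply Lemma \ref{m_ab count} to see that each inner sum is even.
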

\begin{proof}
  We have
  \[
    \sum_{a<b}(f(a)+f(b))m_{ab} = \sum_af(a)\Bigl(\sum_{b\neq a}m_{ab}\Bigr) = \sum_a f(a) \cdot 0 \mod 2,
  \]
  with the last equality being from Lemma \ref{m_ab count}.
\end{proof}
\begin{lemma}\label{simple a+b}
  We have
  \begin{equation}\label{simple a+b equation}
    \sum_{\substack{c,\\s\in \partial_c(z)}}\sum_{a<b}(a+b)m_{a_c}m_{b_c} = 0 \pmod 2
  \end{equation}
\end{lemma}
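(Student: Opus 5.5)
The argument mirrors the proof of Lemma \ref{fa+fb}, applied face by face. Here $m_{a_c}$ abbreviates $m_{a_c}(s_{\out})$, i.e. $m_{a_c}(s_{\out},\alpha)$, for $s\in\partial_c(z)$. Fix $c$ and $s\in\partial_c(z)$, and put $y=s_{\out}\in X_{n+1}$. The inner sum runs over pairs $a<b$ with $a,b\neq c$, as in the surrounding Lemmas \ref{simple coboundary} and \ref{iterate through s,a}. I would show that for each fixed $(c,s)$ the inner sum
\[
  \sum_{\substack{a,b\neq c\\ a<b}}(a+b)\,m_{a_c}(y)\,m_{b_c}(y)
\]
vanishes mod $2$. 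Summing over $c$ and $s$ then gives the lemma.

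First I reindex as in the proof of Proposition \ref{big simplify}. The map $a\mapsto a_c$ is an order-preserving bijection from $\{a\neq c\}$ onto the face indices of $y$. Substituting $a=a_c+1|_{c<a}$ turns the sum into
\[
  \sum_{a'<b'}\bigl(a'+b'+1|_{c\le a'}+1|_{c\le b'}\bigr)\,m_{a'}(y)m_{b'}(y),
\]
where $a'=a_c$ and $b'=b_c$.

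In both pieces I use the symmetric-square trick. For any function $g$ on indices,
\[
  \sum_{a<b}\bigl(g(a)+g(b)\bigr)m_a m_b=\Bigl(\sum_a g(a)m_a\Bigr)\Bigl(\sum_b m_b\Bigr)-\sum_a g(a)m_a^2 .
\]
The factor $\sum_b m_b(y)=\#\partial(y,\alpha)$ is even because $\alpha$ is a cocycle (the same fact used in Lemma \ref{m_ab count}). So mod $2$ the sum reduces to $\sum_a g(a)m_a(y)^2\equiv\sum_a g(a)m_a(y)$.

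I apply this with $g(a')=a'+1|_{c\le a'}$. Since $a'+1|_{c\le a'}=a$, the left-hand side of the lemma becomes
\[
  \sum_{c,\,s\in\partial_c(z)}\ \sum_{a\neq c} a\cdot m_{a_c}(s_{\out}) .
\]
This is exactly $\sum_{a\ne c}a\,m_{ac}(z,\alpha)$ summed over $c$, i.e. $\sum_{a<b}(a+b)m_{ab}(z,\alpha)$, which is $0$ mod $2$ by Lemma \ref{fa+fb} with $f=\mathrm{id}$.

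The main obstacle is bookkeeping: making sure the diagonal term $\sum_a g(a)m_a^2$ is identified correctly as a sum over $\partial_{ac}$ rather than dropped. If the intended reading of the indices is instead $a,b$ with $m_a(s_{\out})$ un-reindexed, the same argument works with $g(a)=a$. In that case the diagonal is $\sum_{c,s}\sum_a a\,m_a(s_{\out})$, which is even by the symmetry $\partial_{a}\circ\partial_c\cong\partial_{ab}$ combined with Lemma \ref{fa+fb}. That reading may be only a coboundary rather than identically $0$, by Proposition \ref{a sum}, which would still suffice in the application.
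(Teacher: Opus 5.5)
Your main argument is correct and is essentially the paper's proof. The paper writes the inner sum as $\bigl(\sum_{a\neq c}a\,m_{a_c}(s_{\out})\bigr)\bigl(\sum_{a\neq c}m_{a_c}(s_{\out})\bigr)$ minus the diagonal, kills the product by the cocycle condition at $s_{\out}$, and identifies the summed diagonal via $\partial_{a_c}\circ\partial_c\cong\partial_{ac}$ with $\sum_{a<b}(a+b)m_{ab}$, which vanishes by Lemma \ref{fa+fb}; your detour through $a'=a_c$ is harmless but unnecessary, since your $g(a')$ is just $a$.

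Your closing remark about an un-reindexed reading is not needed, since your main reading is the paper's, and its claim that the diagonal is even there is false. That diagonal equals $\sum_{a<b}(a+b-1)m_{ab}\equiv\sum_{a<b}m_{ab}$, which can be odd (it is $3$ for $z$ a $2$-simplex and $\alpha$ all vertices), so in that reading only a mod-coboundary statement could hold.
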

\begin{proof}
  We write
  \begin{align*}
    \sum_{\substack{c,\\s\in \partial_c(z)}}\sum_{a<b}(a+b)m_{a_c}m_{b_c}
    &= \sum_{{\substack{c,\\s\in \partial_c(z)}}}\Bigl(\Bigl(\sum_{a\neq c} a m_{a_c}(s_{\out})\Bigr)\Bigl(\underbrace{\sum_{a\neq c} m_{a_c}(s_{\out})\Bigr)}_{=0\pmod 2}-\sum_{a\neq c} am_{a_c}(s_{\out})\Bigr)\\
    &= \sum_{\substack{c,\\s\in \partial_c(z)}}\sum_{a\neq c} am_{a_c}(s_{\out})\\
    &=\sum_{a<b}(a+b)m_{ab} = 0 \pmod 2,
  \end{align*}
  where the last equality is by Lemma \ref{fa+fb}.
\end{proof}
\begin{lemma}\label{extreme c}
  We have
  \[
    \sum_{\substack{c,\\s\in \partial_c(z)}}\sum_{\substack{c<a<b\\ a<b<c}}m_{a_c}(s_{\out})m_{b_c}(s_{\out}) = 0\mod \coboundary.
  \]
  
\end{lemma}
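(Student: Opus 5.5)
The plan is to fix $c$ and $s\in\partial_c(z)$, put $y=s_{\out}\in X_{n+1}$, and rewrite the inner sum in terms of the indices $a'=a_c$, $b'=b_c$ of the faces of $y$. After that, the expression should split into two pieces, each already known to be a coboundary.

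\textbf{Reindexing.} For $a\neq c$ the map $a\mapsto a_c$ is an order-preserving bijection onto $\{0,\ldots,n+1\}$, and $a>c$ holds exactly when $a_c\geq c$. So the condition $c<a<b$ becomes $c\leq a'<b'$, and the condition $a<b<c$ becomes $a'<b'<c$. In words, we sum over pairs $a'<b'$ lying on the same side of the threshold $c$. Writing $m_{a'}=m_{a'}(y)$, the inner sum is therefore
\[
\sum_{a'<b'} m_{a'}m_{b'} \;-\; \Bigl(\sum_{a'<c} m_{a'}\Bigr)\Bigl(\sum_{b'\geq c} m_{b'}\Bigr).
\]
Set $L=\sum_{a'<c}m_{a'}(y)$. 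Since $\alpha$ is a cocycle, $m(y)=\sum_{a'}m_{a'}(y)$ is even, and hence the straddling term is $L(m(y)-L)\equiv L^2\equiv L \pmod 2$. This gives
\[
\sum_{\substack{c<a<b\\ a<b<c}} m_{a_c}(s_{\out})m_{b_c}(s_{\out}) \equiv \sum_{a'<b'} m_{a'}(y)m_{b'}(y) + \sum_{a'<c} m_{a'}(y) \pmod 2 .
\]

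\textbf{First piece.} Summed over $c$ and $s\in\partial_c(z)$, the first term is $\langle \delta\omega, z\rangle$ for the cochain $\langle\omega,y\rangle=\sum_{a<b}m_a(y,\alpha)m_b(y,\alpha)$. So it is a coboundary, exactly as in the last step of Lemma \ref{simple coboundary}.

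\textbf{Second piece.} When $a'<c$ we have $a'=a$, so $\sum_{c}\sum_{s\in\partial_c}\sum_{a<c}m_a(s_{\out})$ counts the elements of $\partial_{ac}(z,\alpha)$, $a<c$, decomposed as $\partial_a\circ\partial_c$. This equals $\sum_{a<b}m_{ab}(z,\alpha)$. To show it is a coboundary, I would mimic Proposition \ref{a sum} with the homotopy $\langle H\omega,y\rangle=\sum_a a\, m_a(y,\omega)$.

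For $s\in\partial_{ab}(z,x)$ with $a<b$, the two factorizations $\partial_{b-1}\circ\partial_a$ and $\partial_a\circ\partial_b$ contribute weights $a+(b-1)$ and $b+a$ respectively. Their total is $1 \pmod 2$, so $\langle\delta Hx+H\delta x,z\rangle=\sum_{a<b}m_{ab}(z,x)$. Applying this with $x=\alpha$, a cocycle, the second piece equals $\langle\delta H\alpha,z\rangle$, which is a coboundary.

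The main obstacle is bookkeeping rather than ideas. The key points to check are the boundary case $a'=c$ in the reindexing and the parity evaluation of the straddling product, which uses the evenness of $m(y)$. The homotopy check is a four-term verification of the same shape as Figure \ref{homotopy}.
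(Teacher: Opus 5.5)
Your argument is correct, but it organizes the computation differently from the paper.

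\textbf{Your route.} You pass at once to the face indices $a'=a_c$, $b'=b_c$ of $y=s_{\out}$. The ``same side of $c$'' sum then becomes $\sum_{a'<b'}m_{a'}(y)m_{b'}(y)$ minus a straddling product. The evenness of $m(y)$ collapses that product to $L=\sum_{a'<c}m_{a'}(y)$. The lemma thus reduces to the standalone fact that $\sum_{a<b}m_{ab}(z,\alpha)$ is a coboundary. You get this from $\langle H\omega,y\rangle=\sum_a a\,m_a(y,\omega)$ (weight $a$ in place of $\binom{a+1}{2}$ in Proposition \ref{a sum}) together with $\delta\alpha=0$.

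\textbf{The paper's route.} The paper keeps the indices of $z$. It writes the indicator of $\{c<a<b\}\cup\{a<b<c\}$ as $1+1|_{c<a}+1|_{c<b}$ and adds the $y$-local coboundary $\sum(1+a_c+b_c)m_{a_c}(s_{\out})m_{b_c}(s_{\out})$. Using $a=a_c+1|_{c<a}$, it lands on $\sum(a+b)m_{a_c}m_{b_c}$, which vanishes identically mod $2$ by Lemma \ref{simple a+b}.

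\textbf{How they relate.} Underneath, the two proofs produce the same coboundary. Since $m(y)$ is even, $\sum_{a'<b'}(a'+b')m_{a'}(y)m_{b'}(y)\equiv\sum_{a'}a'\,m_{a'}(y)$. So the paper's added term is exactly $\delta\omega+\delta H\alpha$ in your notation. Likewise, the vanishing of $\sum_{a<b}(a+b)m_{ab}$ that Lemma \ref{simple a+b} ultimately rests on (via Lemma \ref{fa+fb}) is precisely $\langle H\delta\alpha,z\rangle=0$.

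\textbf{Trade-off.} The paper's packaging reuses existing appendix lemmas and needs no new operator. Yours isolates a clean reusable identity, $\delta H+H\delta=\bigl(\omega\mapsto\sum_{a<b}m_{ab}(\cdot,\omega)\bigr)$, and makes the role of the cocycle condition more visible.
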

\begin{proof}
  The left hand side can be rewritten as
  \begin{align*}
    &\sum_{\substack{c,\\s\in \partial_c(z)}}\sum_{\substack{a<b\\ a,b\neq c}}(1|_{c<a} + 1|_{c<b})m_{a_c}(s_{\out})m_{b_c}(s_{\out}) + \sum_{\substack{c,\\s\in \partial_c(z)}}\sum_{\substack{a<b\\ a,b\neq c}}m_{a_c}(s_{\out})m_{b_c}(s_{\out})
  \end{align*}
  Now we add the term
  \[
    \sum_{\substack{c,\\s\in \partial_c(z)}}\sum_{\substack{a<b\\ a,b\neq c}} (1 +a_c + b_c)m_{a_c}(s_{\out})m_{b_c}(s_{\out}),
  \]
  which is equal to $0$ mod coboundary, to obtain
  \begin{align*}
    &\sum_{\substack{c,\\s\in \partial_c(z)}}\sum_{\substack{a<b\\ a,b\neq c}}(1|_{c<a} + 1|_{c<b})m_{a_c}(s_{\out})m_{b_c}(s_{\out}) + \sum_{\substack{c,\\s\in \partial_c(z)}}\sum_{\substack{a<b\\ a,b\neq c}} (a_c + b_c)m_{a_c}(s_{\out})m_{b_c}(s_{\out})\\
    &= \sum_{\substack{c,\\s\in \partial_c(z)}}\sum_{\substack{a<b\\ a,b\neq c}} (a + b)m_{a_c}(s_{\out})m_{b_c}(s_{\out}) = 0\pmod 2
  \end{align*}
  where the last identity is by Lemma \ref{simple a+b}.
\end{proof}

\begin{lemma}\label{Leibniz}
  For all integers $n,a$, we have $\binom{na}{2} = a\binom{n}{2}+ n\binom{a}{2}\pmod 2$.
\end{lemma}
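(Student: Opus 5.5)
The claim is $\binom{na}{2} \equiv a\binom{n}{2} + n\binom{a}{2} \pmod 2$, and the plan is to establish an exact integer identity first and then reduce it mod $2$.

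Write $\binom{na}{2} = \frac{na(na-1)}{2}$. The goal is to compare this with $a\binom{n}{2} + n\binom{a}{2} = \frac{an(n-1) + na(a-1)}{2} = \frac{na(n+a-2)}{2}$. Subtracting the two, we get
\[
\binom{na}{2} - a\binom{n}{2} - n\binom{a}{2} = \frac{na\bigl((na-1)-(n+a-2)\bigr)}{2} = \frac{na(n-1)(a-1)}{2}.
\]
The key step is the factorization $na - n - a + 1 = (n-1)(a-1)$.

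It remains to show that $\frac{na(n-1)(a-1)}{2}$ is even, i.e.\ that $na(n-1)(a-1) \equiv 0 \pmod 4$. Both $n(n-1)$ and $a(a-1)$ are products of consecutive integers, so each is even, and their product is divisible by $4$. Equivalently,
\[
\frac{na(n-1)(a-1)}{2} = 2\binom{n}{2}\binom{a}{2},
\]
which is manifestly even. This holds for all integers $n,a$, including negative ones, if $\binom{x}{2}$ is interpreted as $x(x-1)/2$. The identity is therefore true over $\mathbb{Z}$ up to an even correction term, which gives the congruence.

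There is no real obstacle. The only points needing care are the algebraic factorization and checking that the correction term is exactly $2\binom{n}{2}\binom{a}{2}$.
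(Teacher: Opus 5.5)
Your proof is correct and carries out the ``explicit computation'' that the paper mentions; the paper's own proof says only that the identity follows by induction on $n$ or by explicit computation. Your exact integer identity $\binom{na}{2} = a\binom{n}{2} + n\binom{a}{2} + 2\binom{n}{2}\binom{a}{2}$, valid for all integers $n,a$, makes that computation fully precise.
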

\begin{proof}
  The above identity follows either by induction on $n$, or by explicit computation.
\end{proof}

\begin{lemma}\label{one intersection}
  We have
  \[
    \sum_c c\sum_{\substack{a,b\neq c\\a<b}}(a+b) m_{ac}m_{bc}= 0\pmod 2.
  \]
  
\end{lemma}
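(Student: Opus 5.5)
The plan is to fix $c$, collapse the inner sum over pairs $a<b$ into products of single sums over $a$, and then use Lemma \ref{m_ab count} to kill most of it. We read $m_{ac}$ for $a>c$ as $m_{ca}$; this is symmetric, since it counts the span $\partial_{\{a,c\}}(z,\alpha)$. All congruences are modulo $2$ and are exact: no coboundary is needed.

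\textbf{Step 1: rewrite the inner sum.} Fix $c$ and put
\[
S_c=\sum_{a\neq c} m_{ac},\qquad T_c=\sum_{a\neq c} a\, m_{ac}.
\]
Each unordered pair $a<b$ contributes $(a+b)m_{ac}m_{bc} = a\,m_{ac}m_{bc} + b\,m_{bc}m_{ac}$. So the inner sum equals the sum over \emph{ordered} pairs $(a,b)$ with $a\neq b$ and $a,b\neq c$ of $a\,m_{ac}m_{bc}$. Completing this to all ordered pairs and subtracting the diagonal gives
\[
\sum_{\substack{a,b\neq c\\a<b}}(a+b)\,m_{ac}m_{bc} \;=\; T_c S_c - \sum_{a\neq c} a\, m_{ac}^2 .
\]

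\textbf{Step 2: reduce modulo $2$.} By Lemma \ref{m_ab count} applied at index $c$, $S_c\equiv 0 \pmod 2$, so the term $T_cS_c$ vanishes. Since $m^2\equiv m \pmod 2$, the inner sum is congruent to $\sum_{a\neq c} a\,m_{ac}$.

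\textbf{Step 3: sum over $c$.} The left-hand side of the lemma is therefore congruent to
\[
\sum_c\sum_{a\neq c} ac\, m_{ac}.
\]
This is a sum over ordered pairs of distinct indices of a summand symmetric in $a$ and $c$. It therefore equals $2\sum_{a<c} ac\,m_{ac}\equiv 0$.

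The only point needing care is Step 1: correctly converting the unordered, weighted sum into the ordered product form, with the diagonal correction. After that the argument is a direct application of Lemma \ref{m_ab count} and a symmetry count.
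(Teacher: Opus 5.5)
Your proof is correct and follows the paper's argument essentially step for step: the paper factors the inner sum as $T_cS_c$ minus a diagonal term, kills $T_cS_c$ using Lemma \ref{m_ab count}, and notes that $\sum_c\sum_{a\neq c} ac\,m_{ac}$ is symmetric in $(a,c)$. One small point in your favor: your diagonal correction $\sum_{a\neq c} a\,m_{ac}^2$, reduced via $m^2\equiv m \pmod 2$, is slightly more precise than the paper's, which writes $\sum_{a\neq c} a\,m_{ac}$ directly.
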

\begin{proof}
  The left hand side can be written as
  \begin{align*}
    &\sum_c c \Biggl(\Bigl(\sum_{a\neq c}a\cdot m_{ac}\Bigr)\underbrace{\Bigl(\sum_{b\neq c}m_{bc}\Bigr)}_{=0}-\sum_{a\neq c} a \cdot m_{ac}\Biggr)\\
    &= \sum_c c \sum_{a\neq c}a\cdot m_{ac}\\
    &= \sum_{\substack{(a,c)\\a\neq c}}ac\cdot m_{ac}= 0\pmod 2.
  \end{align*}
  The last congruence is shown by observing each $(a,c)$ summand is equal to the $(c,a)$ summand.
\end{proof}
\section{Advanced cochain identities}\label{correspondence ordering}
We introduce a convenient way of drawing the span $\partial(z,\alpha)$, which will help us simplify our formula for $\mathfrak{sq}^2(\alpha)$.\par
Let $\mathbb{H}$ denote the complex upper half-plane $\{z\in \mathbb{C}: \Ima z>0\}$. We call a semicircle in $\mathbb{H}$ centered on the real axis a \textit{chord}. In other words, chords are precisely the lines that do not reach infinity when viewing $\mathbb{H}$ as Poincar\'e's half-plane model.
\begin{definition}
  Given an augmented semi-simplicial object $X_\bullet$, an element $z\in X_{n+2}$, and a set of elements $S\subset X_n$,  we define the \textit{degenerate chord presentation} $\overline{\mathcal{C}}(z,S)$ as the following set of chords $C$ embedded in $\mathbb{H}$: We view each $s\in \partial_{ab}(z,S)$ as a copy of the chord $C_s$ having ends on $a$ and $b$.\par
  We now define the \textit{(nondegenerate) chord presentation} $\mathcal{C}(z,S)$ as a similar presentation of chords, but we perturb the endpoints so no two endpoints meet at the same point (although their index stays the same. (See Figure \ref{degen vs. nondegen}).
  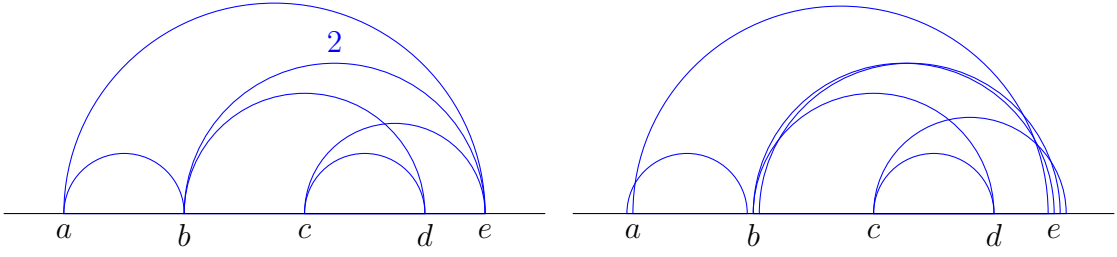
\begin{figure}
      \begin{tabular}{ c c }
      \begin{tikzpicture}[scale=0.8]
        \hpline{1}{3}
        \hpline[$2$]{3}{8}
        \hpline{5}{8}
        \hpline{1}{8}
        \hpline{3}{7}
        \hpline{5}{7}
        \draw (0,0) -- (9,0);
        \coordinate (1) at (1,0);
        \fill (1) node[below] {$a$};
        \coordinate (2) at (3,0);
        \fill (2) node[below] {$b$};
        \coordinate (3) at (5,0);
        \fill (3) node[below] {$c$};
        \coordinate (4) at (7,0);
        \fill (4) node[below] {$d$};
        \coordinate (5) at (8,0);
        \fill (5) node[below] {$e$};
      \end{tikzpicture}
        &
          \begin{tikzpicture}[scale=.8]
        \hpline{0.9}{2.9}
        \hpline{3}{8}
        \hpline{3.1}{8.1}
        \hpline{5}{8.2}
        \hpline{1}{7.9}
        \hpline{3}{7}
        \hpline{5}{7}
        \draw (0,0) -- (9,0);
        \coordinate (1) at (1,0);
        \fill (1) node[below] {$a$};
        \coordinate (2) at (3,0);
        \fill (2) node[below] {$b$};
        \coordinate (3) at (5,0);
        \fill (3) node[below] {$c$};
        \coordinate (4) at (7,0);
        \fill (4) node[below] {$d$};
        \coordinate (5) at (8,0);
        \fill (5) node[below] {$e$};
      \end{tikzpicture}
      \end{tabular}
    \caption{Left: A degenerate chord presentation $\overline{\mathcal{C}}(z,\alpha)$, for $\alpha$ a cocycle. There are two chords between $b$ and $e$, so the chord is labeled with a ``$2$'' to show multiplicity. Right: A chord presentation $\mathcal{C}(z,\alpha)$. Note the even number of chords coming out of each index.}
    \label{degen vs. nondegen}
    \end{figure}
    We choose a specific perturbation, that relies on a lexicographic order on a product of spans. For each index $a$, we perturb the chords meeting $a$ by the following procedure:
    \begin{enumerate}[label = (P-\arabic*), ref = (P-\arabic*)]
    \item We write the chords that intersect $a$ as the union
      \begin{equation}\label{chords intersecting a}
        \bigsqcup_{s\in \partial_a(z)}\partial(s_{\out},\alpha)\times \{s\}\subset \partial^{n+1}\times_{X_{n+1}}\partial^{n+2}.
      \end{equation}
    \item Order the above union according to the lexicographic order of $\partial^{n+2}\times \partial^{n+1}$, where the ordering of $\partial^{n+2}, \partial^{n+1}$ is by Definition \ref{ordering convention} (notice the switch of factors here). In particular, we have
      \begin{equation}\label{subindex}
        \partial(s_{\out},\alpha)\times \{s\} <  \partial(t_{\out},\alpha)\times \{t\} \qquad \text{if $s<t$}.
      \end{equation}
    \item Perturb the chords meeting $a$ according to this ordering. That is, higher ordered chords should be to the right of lower ordered chords.
    \end{enumerate}\par
    The disjoint union in (\ref{chords intersecting a}) can be viewed as a partition on the set of chord ends on $a$. And by (\ref{subindex}), each partition occupies its own (small) arc in $S^1$. We say that chords in $\partial(s_{\out},\alpha)\circ \{s\}$  the \textit{meet $s$}.
  \end{definition}
  \begin{remark}
    By picking a biholomorphism $\mathbb{H}\to \mathbb{D}$, we can view degenerate and nondegenerate chord presentations as lying inside $\mathbb{D}$.
  \end{remark}
  
We now prove some (progressively more challenging) lemmas, which will help us gain familiarity with chord presentations.
\begin{lemma}\label{even chords}
  Let $\alpha$ be a cocycle in $C^n(X_\bullet;\mathbb{F}_2)$, and let $z\in X_{n+2}$. Both $\overline{\mathcal{C}}(z,\alpha)$ and $\mathcal{C}(z,\alpha)$ have an even number of chords coming out of each index $a$. Furthermore, both chord presentations have an even number of chords meeting $s\in \partial_a$. 
\end{lemma}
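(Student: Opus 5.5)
The plan is to reduce both statements to the cocycle condition on $\alpha$, exactly as in the proof of Lemma \ref{m_ab count}. The degenerate and nondegenerate presentations have the same chords. The perturbation only moves endpoints within a small neighborhood of each index and keeps each endpoint's index, so it suffices to count chord ends in $\overline{\mathcal{C}}(z,\alpha)$.

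\textbf{Second claim first.} Fix an index $a$ and $s\in\partial_a(z)$. By (P-1), the chord ends at $a$ that meet $s$ are in bijection with $\partial(s_{\out},\alpha)\times\{s\}$. Here we use the identification $\partial_{b_a}^{n+1}\circ\partial^{n+2}_a\cong\partial^{n+2}_{ab}$, noting that $b\mapsto b_a$ is a bijection from indices $b\neq a$ onto $[n+1]$.

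\textbf{Cocycle step.} The number of chord ends meeting $s$ is therefore $\#\partial(s_{\out},\alpha)=m(s_{\out},\alpha)$, summed over all faces of $s_{\out}\in X_{n+1}$. This number is exactly $\langle\delta\alpha,s_{\out}\rangle$ read mod $2$. Since $\alpha$ is a cocycle, it is even.

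\textbf{First claim.} The chord ends at $a$ form the disjoint union (\ref{chords intersecting a}) over $s\in\partial_a(z)$. Hence their number is $\sum_{s\in\partial_a(z)}\#\partial(s_{\out},\alpha)=\sum_{b\neq a}m_{ab}(z,\alpha)$. This is a sum of even numbers, which also recovers Lemma \ref{m_ab count}.

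The only point needing care is the bookkeeping in the identification. Each chord $C_u$ with $u\in\partial_{ab}(z,\alpha)$ must contribute exactly one end at $a$, coming from the factorization $u=p\circ s$ with $s\in\partial_a$. It must also contribute exactly one end at $b$, coming from the factorization through $\partial_b$. The lax functor structure $X(\partial_{b_a},\partial_a)$ supplies these bijections.
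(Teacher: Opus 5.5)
Your proposal is correct and follows the paper's proof. Both fix $s\in\partial_a(z)$, identify the chords meeting $s$ with $\partial(s_{\out},\alpha)$, note that its cardinality is $\langle\delta\alpha,s_{\out}\rangle \equiv 0 \pmod 2$ because $\alpha$ is a cocycle, and then sum over $s\in\partial_a(z)$ to get the count at $a$. Your added remarks on the bijection $b\mapsto b_a$, the span isomorphism $\partial_{b_a}\circ\partial_a\cong\partial_{ab}$, and the perturbation preserving indices make explicit what the paper leaves implicit.
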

We remark that the proof of this lemma is identical to the proof of Lemma \ref{m_ab count}, but stated in the language of chord presentations.
\begin{proof}
  Fix an $s\in \partial_a$. The number of chords meeting $s$ is equal to $\#\partial(s_{\out},\alpha)$, which is $0$ mod $2$ by virtue of $\alpha$ being a cocycle. Adding up over all $s\in \partial_a(z)$, we obtain an even number of chords coming out of $a$.
\end{proof}
\begin{lemma}\label{ab+cd lemma}
  We have
  \begin{equation}\label{ab+cd=0}
    \sum_{a<b<c<d} (ab+cd) (m_{bc}m_{ad} + m_{cd}m_{ab} + m_{ac}m_{bd}) = 0.
  \end{equation}
\end{lemma}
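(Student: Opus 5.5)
The idea is to separate the coefficient $ab+cd$ into a part that is symmetric in the four indices and a part that depends on which two indices are the smallest. Write $S=\{a<b<c<d\}$ and
\[
P(S)=m_{bc}m_{ad}+m_{cd}m_{ab}+m_{ac}m_{bd},
\]
the sum over the three perfect matchings of $S$. Since $ab+ac+ad+bc+bd+cd=\sigma_2(S)$ is the second elementary symmetric function, we have
\[
ab+cd=\sigma_2(S)+(a+b)(c+d)\pmod 2 .
\]
So the left side of (\ref{ab+cd=0}) splits as $\Sigma_1+\Sigma_2$, where $\Sigma_1=\sum_S\sigma_2(S)P(S)$ and $\Sigma_2=\sum_{a<b<c<d}(a+b)(c+d)P(S)$. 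I would show that each of these is a sum of terms killed by Lemma \ref{m_ab count}, in the way Lemmas \ref{fa+fb}, \ref{one intersection} and \ref{simple a+b} were proved.

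For $\Sigma_1$, reindex by unordered pairs of disjoint edges $\{e,e'\}$ with $e=\{x,y\}$ and $e'=\{u,v\}$. Then
\[
\sigma_2(e\cup e')=xy+uv+(x+y)(u+v),
\]
and the symmetric sum becomes
\[
\sum_{\{e,e'\}}\bigl(xy+uv+(x+y)(u+v)\bigr)m_em_{e'} .
\]
The $(x+y)(u+v)$ piece is half of $\bigl(\sum_e(x+y)m_e\bigr)^2$ minus diagonal and one-vertex-overlap terms. Lemma \ref{fa+fb} gives $\sum_e(x+y)m_e=0$, and Lemma \ref{one intersection} handles the overlap terms; this is the same bookkeeping as in Lemma \ref{a+b lemma}. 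For the $xy$ pieces, use that the edges disjoint from $e$ have total weight $T-\deg x-\deg y+m_e\equiv T+m_e$, where $T=\sum_{a<b}m_{ab}$, since all degrees are even by Lemma \ref{even chords}. This turns the $xy$ contribution into $(T+1)\sum_{x<y}xy\,m_{xy}$. I expect this to cancel against a matching leftover coming from $\Sigma_2$, and the identity of Lemma \ref{a+b lemma} should be used to transfer such terms.

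The main obstacle is $\Sigma_2$, because its coefficient depends on the order and not only on the set $S$. Mod $2$, $(a+b)(c+d)=1$ exactly when both the two smallest and the two largest elements of $S$ have mixed parity. My first attempt is to fix a threshold and sum over it. For a fixed "cut" between $b$ and $c$, $\Sigma_2$ counts pairs of edges whose four endpoints straddle the cut two-and-two. This is where the chord presentation $\mathcal{C}(z,\alpha)$ helps. Each matching in $P(S)$ corresponds to two chords that are nested, disjoint, or crossing. The nested and disjoint configurations with given endpoint sets pair off, so only the crossing term $m_{ac}m_{bd}$ and one of the other two survive. Lemma \ref{even chords} (even valence at every index) then lets me rewrite sums like $\sum_{c<d,\ c,d>b}m_{ac}m_{bd}$ as sums over edges with one end $\le b$. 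These collapse by the same trick as Lemma \ref{m_ab count}: a factor $\sum_{v}m_{uv}$ vanishes.

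If the cut-by-cut approach gets messy, the fallback is to treat the three matchings separately. For each one, write $\sum_{a<b<c<d}(a+b)(c+d)m_{\cdot\cdot}m_{\cdot\cdot}$ as $\sum_{e}g(e)m_e\sum_{e'\in R(e)}h(e')m_{e'}$ for an explicit region $R(e)$, and apply Lemma \ref{fa+fb} to the inner sum after completing $R(e)$ to all edges at a vertex. The corrections this produces should be exactly $(T+1)\sum_{x<y}xy\,m_{xy}$, cancelling $\Sigma_1$. I would check the whole argument on small cases ($n+2\le 5$) before committing to it.
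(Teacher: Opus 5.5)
\textbf{There is a genuine gap: the statement you are trying to prove, read literally, is false.}

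You read the coefficient literally, with $a,b$ the two smallest and $c,d$ the two largest elements of $S$. With that reading the identity does not hold, so the cancellation you defer (``I expect this to cancel'', ``the corrections \dots\ should be exactly $(T+1)\sum_{x<y}xy\,m_{xy}$'') cannot occur.

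Here is a counterexample. Take indices $0,\dots,5$ with $m_{ab}$ odd exactly for $ab\in\{01,02,12,34,35,45\}$. This is two disjoint triangles, so every degree is even. It is realized by the full $5$-simplex (e.g.\ $\Lambda$ of the constant one-point functor on $\underline{2}^6$), with $z$ the top face and $\alpha$ the six faces $z\setminus\{z_a,z_b\}$ for $ab$ a triangle edge.
\begin{itemize}
\item Only the nine sets $S=e\cup e'$ with $e\subset\{0,1,2\}$, $e'\subset\{3,4,5\}$ have $P(S)\neq 0$. For each of them $P(S)=m_{ab}m_{cd}=1$ with $e=\{a,b\}$ and $e'=\{c,d\}$.
\item The literal sum is therefore $3(0+0+2)+3(12+15+20)=147\equiv 1$.
\item In your split, $\Sigma_2=(1+2+3)(7+8+9)\equiv 0$.
\item Meanwhile $\Sigma_1\equiv 7\cdot 49+6\cdot 24\equiv 1$: your $xy$-part is $(T+1)\sum xy\,m_{xy}=7\cdot 49$, and the $(x+y)(u+v)$-part is $6\cdot 24$.
\end{itemize}
So nothing in $\Sigma_2$ can absorb the leftover. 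Your planned small-case check with $n+2\le 5$ would have exposed this.

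\textbf{What the paper actually proves.} Its proof establishes the interleaved identity $\sum_{a<b<c<d}(ac+bd)(m_{bc}m_{ad}+m_{cd}m_{ab}+m_{ac}m_{bd})=0$, so the displayed $ab+cd$ should be read as $ac+bd$. The first step rewrites the sum as $\sum_{a<b}ab\,S_{ab}$, where $\{c,d\}$ ranges over pairs with exactly one element strictly between $a$ and $b$ ($a<c<b<d$ or $c<a<d<b$). For a $4$-set $w<x<y<z$ this picks up $wy+xz$, not $wx+yz$. This is also the version Proposition \ref{Moran sq2 rewrite} needs: after subtracting from Lemma \ref{a+b lemma}, each of the three matchings of $\{a<b<c<d\}$ carries coefficient exactly $ac+bd$.

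\textbf{The missing idea is localization.} For each fixed $a<b$ the inner sum vanishes on its own:
\begin{itemize}
\item In the chord picture, $S_{ab}=m_{ab}x_{ab}+r_ar_b+l_al_b$.
\item Even degree at $a$ gives $m_{ab}+l_a+r_a\equiv 0$.
\item Summing degrees over the vertices on one side of the chord $ab$ gives $x_{ab}\equiv l_a+r_b$.
\item Hence $S_{ab}\equiv l_a(1+l_a)\equiv 0$.
\end{itemize}
Your global $\sigma_2$-splitting does not transfer well to the correct statement. There $ac+bd\equiv\sigma_2+(a+c)(b+d)$, and the order-dependent remainder is just as hard as your $\Sigma_2$. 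The vague steps you left there (``nested and disjoint configurations pair off'') would still have to be made precise.
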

\begin{proof}
  We rewrite the sum as the following:
  \begin{align}\label{ab+cd rewritten}
    &\sum_{a<b}ab\sum_{\substack{c,d\\a<c<b<d\text{ or}\\c<a<d<b}}(m_{ab}m_{cd} + m_{ad}m_{bc} + m_{ac}m_{bd}) := \sum_{a<b} ab \cdot S_{ab}
  \end{align}
  It remains to prove that the interior sum $S_{ab}$ is $0$ for all $a<b$. $S_{ab}$ is the total count of three types of pairs of chords in $\mathcal{C}(z,\alpha)$ viewed, this time, on the unit disk $\mathbb{D}$ (see Figure \ref{chord pairs}):
  \begin{enumerate}[label = (C-\arabic*), ref = (C-\arabic*)]
  \item $(C_{ab},C_{cd})$, where $C_{ab}\in \partial_{ab}(z)$, and $C_{cd}$ intersects $C_{ab}$ transversely.\label{type 1}
  \item $(C_{ac}, C_{bd})$, where $C_{ac}$ veers counterclockwise out of $a$ in the direction of $b$, and $C_{bd}$ veers counterclockwise out of $b$ in the direction of $a$.\label{type 2r}
  \item $(C_{ac}, C_{bd})$, where $C_{ac}$ veers clockwise out of $a$ in the direction of $b$, and $C_{bd}$ veers clockwise out of $b$ in the direction of $a$.\label{type 2l}
  \end{enumerate}
  \begin{figure}
    \begin{tabular}{ c c c }
      \begin{tikzpicture}[scale=2]
  \draw (0,0) circle (1);
  \begin{scope}
    \clip (0,0) circle (1);
    \hgline{120}{290}
    \hgline{60}{190}
   \end{scope}
    \coordinate (B) at (-70:1);
    \fill (B) circle (1pt) node[below right] {$b$};
    \coordinate (A) at (120:1);
    \fill (A) circle (1pt) node[above left] {$a$};
  \end{tikzpicture} &
  \begin{tikzpicture}[scale=2]
  \draw (0,0) circle (1);
  \begin{scope}
    \clip (0,0) circle (1);
    \hgline{-70}{20}
    \hgline{120}{190}
   \end{scope}
    \coordinate (B) at (-70:1);
    \fill (B) circle (1pt) node[below right] {$b$};
    \coordinate (A) at (120:1);
    \fill (A) circle (1pt) node[above left] {$a$};
  \end{tikzpicture} &
  \begin{tikzpicture}[scale=2]
  \draw (0,0) circle (1);
  \begin{scope}
    \clip (0,0) circle (1);
    \hgline{290}{200}
    \hgline{120}{60}
   \end{scope}
    \coordinate (B) at (-70:1);
    \fill (B) circle (1pt) node[below right] {$b$};
    \coordinate (A) at (120:1);
    \fill (A) circle (1pt) node[above left] {$a$};
  \end{tikzpicture}
      \end{tabular}
  \caption{The types of chord pairs in $\overline{\mathcal{C}}(z,\alpha)$ counted in the interior summand of Equation (\ref{ab+cd rewritten}). Left: a chord pair of type \ref{type 1}. Middle: a chord pair of type \ref{type 2r}. Right: a chord pair of type \ref{type 2l}.}
  \label{chord pairs}
\end{figure}
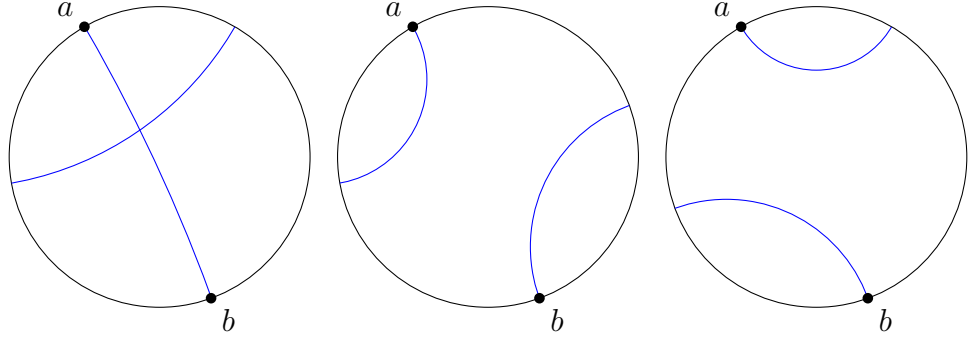
The number of chord pairs of type \ref{type 1} is $m_{ab}x_{ab}$: the number $m_{ab}$ of chords joining $a$ to $b$ multiplied by the number $x_{ab}$ of chords that separate $a$ and $b$. The number of pairs of type \ref{type 2r} is $r_ar_b$, where $r_a$ is the number of chords veering counterclockwise, or ``right,'' out of $a$ in the direction of $b$, and $r_b$ is defined similarly, with the roles of $a$ and $b$ reversed. Similarly, the number of pairs of type \ref{type 2l} is $l_al_b$, where $l_a$, $l_b$ are defined similar to $r_a$, $r_b$, but with ``counterclockwise'' replaced with ``clockwise.'' The total number of chords coming out of $a$ is equal to $m_{ab} + l_a+r_a$, which is $0$ because $\alpha$ is a cocycle (see Lemma \ref{even chords}). We also observe that $x_{ab}= l_a+r_b\mod 2$, again using the fact that the degree of every vertex is even. Putting these identities together, we find
  \begin{align*}
      S_{ab} & = m_{ab}x_{ab} + l_al_b + r_ar_b\\
             &= (l_a+r_a)(l_a+r_b) + l_al_b + r_ar_b\\
             &= l_al_b+l_a+l_ar_b+l_ar_a\\
             &= l_a(1+l_b+r_b+r_a)\\
      &= l_a(1+l_a) = 0\pmod 2.\qedhere
    \end{align*}
  \end{proof}

  \begin{proposition}\label{cross counting}
    \begin{equation}\label{cross counts}
    \begin{aligned}
      &\sum_{a<b<c<d}m_{ac}(z)m_{bd}(z) + \sum_{\substack{s,t\in \partial_c\\s<t}}\sum_{\substack{c<a<b\\a<b<c\\b<c<a}}m_{a_c}(s_{\out})m_{b_c}(t_{\out})\\
      &\quad+ \sum_{a<b} \#\left\{\{s,t\}\subset\partial_{ab}(z,\alpha)\Bigm|\ \text{left-break order of $\{s,t\}$ agrees with right-break order}\right\}\\
    &= \sum_{K\subset \Gamma(z,\alpha)} \bigl(1 + \#\{a\to \overrightarrow{b}\to c\ \vert\  a>b\} + \#\{a\to \overleftarrow{b}\to c\ \vert\  a<b\}\bigr)\mod \coboundary.
    \end{aligned}
    \end{equation}
  \end{proposition}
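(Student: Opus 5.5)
The plan is to read both sides of \eqref{cross counts} geometrically inside the chord presentation $\mathcal{C}(z,\alpha)$, drawn in the disk $\mathbb{D}$. The left-hand side will be a count of transverse crossings of chords. The right-hand side will be a sum of Whitney-type rotation-number contributions, one per closed curve $K$ of $\Gamma(z,\alpha)$.

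\textbf{Step 1: $\Gamma(z,\alpha)$ as immersed closed curves.} I would first realize $\Gamma(z,\alpha)$ as a family of closed curves built from the chords.
\begin{itemize}
\item Each element $s\in\partial_{ab}(z,\alpha)$ gives a chord $C_s$. Its two ends sit at $a$ and at $b$, and they meet $q$ and $q'$ respectively, where $s=p\circ q=p'\circ q'$.
\item The facewise boundary matching $\mathfrak{m}$ pairs consecutive elements $(s_i,s_{i+1})$ of $\partial(y,\alpha)$ for each $y=q_{\out}$.
\item By the perturbation rule (P-2), chords meeting the same $q$ occupy a small arc of $S^1$ and appear there in exactly the order used by $\mathfrak{m}$.
\item So I can join matched chord ends by short boundary-parallel arcs. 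Lemma \ref{even chords} makes these joins consistent, and the result is a disjoint union of closed immersed curves, one for each $K\subset\Gamma(z,\alpha)$.
\item After a small isotopy, all double points of this union are transverse chord crossings. The only exception is the matched consecutive pairs, which by construction do not cross.
\end{itemize}

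\textbf{Step 2: the left-hand side is the total crossing number, mod coboundary.}
\begin{itemize}
\item Chords $C_{ac},C_{bd}$ with $a<b<c<d$ are interleaved, so they cross exactly once. This gives $\sum m_{ac}m_{bd}$.
\item Two chords sharing one endpoint index $c$, meeting $s<t\in\partial_c$ respectively, cross exactly when the cyclic position of their other endpoints $a,b$ relative to $c$ is one of $c<a<b$, $a<b<c$, $b<c<a$. This uses the ordering \eqref{subindex}, and gives the second sum.
\item Two chords $s,t\in\partial_{ab}$ share both endpoint indices. At $a$ they are ordered by the left-break order, and at $b$ by the right-break order, so they cross iff these orders agree. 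This gives the third sum.
\item Pairs of chords meeting the same $q$ need separate treatment: their mutual crossings are governed by the within-$\partial(y,\alpha)$ ordering. I expect these to contribute only terms of the shape $\sum_{a<b} b\,m_{ab}$ or $\sum \binom{a+1}{2}m_a$. I would kill these using Proposition \ref{a sum}, Lemma \ref{fa+fb} and Lemma \ref{extreme c}; this is where ``mod coboundary'' enters.
\end{itemize}

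\textbf{Step 3: Whitney's formula.} Crossings between two different closed curves come in even numbers, since the curves are closed in the plane. So the total crossing number is $\equiv \sum_K \#\{\text{self-crossings of }K\}$. For a generic immersed closed curve, Whitney's formula gives $\#\text{self-crossings}\equiv \operatorname{rot}(K)+1 \pmod 2$.

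\textbf{Step 4: compute $\operatorname{rot}(K)$ from the arrows.} Finally I would compute $\operatorname{rot}(K)\bmod 2$ locally. Each chord contributes a fixed turning, and each connecting boundary arc at a vertex $a\to b\to c$ of $K$ contributes a half-turn whose direction is determined as follows:
\begin{itemize}
\item whether the pair is oriented as $\overrightarrow{b}$ or $\overleftarrow{b}$ in the cubical special graph structure;
\item whether the curve passes through $b$ going from a larger or a smaller index, which is recorded by the comparisons $a>b$ versus $a<b$.
\end{itemize}
Summing these contributions should give exactly $\#\{a\to\overrightarrow{b}\to c\mid a>b\}+\#\{a\to\overleftarrow{b}\to c\mid a<b\}\pmod 2$.

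\textbf{Main obstacle.} I expect the hardest part to be this last matching of conventions. One must check that the arrow directions in $\Gamma(z,\alpha)$ from \cite{rajapakse2025} correspond precisely to clockwise versus counterclockwise turning of the boundary arcs under our perturbation. One must also check that the extra crossings among chords sharing the same $q$ from Step 2 really are coboundaries rather than a residual term. I would first test both on small configurations: a single cycle of two chords in $\partial_{ab}$, and a triangle $a,b,c$. I would then fix the sign conventions and argue locally vertex by vertex.
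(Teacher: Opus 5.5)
Your proposal is correct. Steps 1--3 are the paper's own argument, and you take a genuinely different route only for the per-cycle count.

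\textbf{Where you agree with the paper.} You add the crossings between chords meeting the same $q$, read the result as the total number of crossing chord pairs, and discard crossings between distinct cycles by parity. The same-$q$ term is exactly
\[
\sum_{c,\,s\in\partial_c}\ \sum_{c<a<b\ \text{or}\ a<b<c} m_{a_c}(s_{\out})m_{b_c}(s_{\out}),
\]
which Lemma \ref{extreme c} kills mod coboundary. It is quadratic, not of the linear shapes $\sum b\,m_{ab}$ you guessed, and Proposition \ref{a sum} and Lemma \ref{fa+fb} are not needed. There is also a slip in Step 1: matched chords can cross each other (two chords at the same $q\in\partial_c$ going to $a<b$ with $c<a$). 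Such crossings are genuine double points, and your Step 2 does count them. What is true is only that the connecting arcs create no new double points.

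\textbf{Where you differ.} The paper proves $\#(\text{crossings in }\mathcal{C}(K))\equiv 1+\cdots$ with the telescoping quantity $X(k)$. It starts at the leftmost chord end and adds chords one at a time. A Jordan-type parity argument shows each new chord meets the earlier path an odd number of times exactly when its ends straddle the previous endpoint. This requires rewriting the arrow count with $c<b$, $c>b$ and treating $X(1)$ and $X(m)$ separately.

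You instead use Whitney's congruence $\#\text{double points}\equiv\operatorname{rot}(K)+1$. Your Step 4 can be made exact in the upper half-plane:
\begin{itemize}
\item A chord traversed leftward turns by $+\pi$ and rightward by $-\pi$. So chords do not contribute a ``fixed'' turning.
\item A U-turn moving right turns by $+\pi$, and one moving left by $-\pi$. Under the convention implicit in the paper's proof (matched pairs go from lower to higher order, and higher order lies further right), moving right is $\overrightarrow{b}$ and moving left is $\overleftarrow{b}$.
\end{itemize}
Group each chord $a\to b$ with the U-turn at $b$. The pair contributes $+2\pi$ exactly when $a>b$ and $\overrightarrow{b}$, $-2\pi$ exactly when $a<b$ and $\overleftarrow{b}$, and $0$ otherwise. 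Hence
\[
\operatorname{rot}(K)=\#\{a\to \overrightarrow{b}\to c\mid a>b\}-\#\{a\to \overleftarrow{b}\to c\mid a<b\},
\]
and Whitney gives the right-hand side in its original form.

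\textbf{Trade-offs.} Your route is uniform over vertices and needs no base chord. It gives an integer refinement and explains the constant $1$ as Whitney's $\pm1$. The cost is citing Whitney's theorem and checking that you have a normal immersion. That check is routine: push the U-turns close enough to the boundary that they meet no chord, and perturb away triple points, which is harmless since distinct geodesics meet at most once transversally. The paper's telescoping is in effect a hands-on mod $2$ proof of Whitney's formula for this configuration.
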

  \begin{proof}
    Adding
    \[
       \sum_{s\in \partial_c}\sum_{\substack{c<a<b\\a<b<c}}m_{a_c}(s_{\out})m_{b_c}(t_{\out})
    \]
    to the left-hand side of Equation (\ref{cross counts}), we obtain a term that simply counts the number of pairs of chords in $\mathcal{C}(z,\alpha)$ that cross. But the term we added is $0$ mod coboundary from Proposition \ref{extreme c}. It remains to show that the right hand side is equal (mod 2) to the number of crosses. \par
    The cycles $K\subset \Gamma(z,\alpha)$ partition the set $\mathcal{C}(z,\alpha)$ into subsets corresponding to each $K$. Indeed, each edge $e_1\xline{e'} e_2$ corresponds with a unique chord $\mathcal{C}(e')$ and thus each cycle $K$ corresponds to a cycle of chords $\mathcal{C}(K)\subset \mathcal{C}(z,\alpha)$ (see Figure \ref{cycle crossings}).
    \begin{figure}
      \begin{tikzpicture}[scale=1]
        \hpline{8.1}{13}
        \hpline{3}{5.7}
        \hpline{5.8}{8.0}
        \hpline{6.1}{12.9}
        \hpline{2.9}{6.0}
        \hplinesty[densely dashed]{1.0}{3.2}
        \hplinesty[densely dashed]{3.3}{8.3}
        \hplinesty[densely dashed]{8.4}{13.2}
        \hplinesty[densely dashed]{1.1}{10}
        \hplinesty[densely dashed]{10.1}{13.3}
        \draw (0,0) -- (15,0);
        \coordinate (1) at (1,0);
        \fill (1) node[below] {$a_1$};
        \coordinate (2) at (3,0);
        \fill (2) node[below] {$a_2$};
        \coordinate (3) at (6,0);
        \fill (3) node[below] {$a_3$};
        \coordinate (4) at (8.1,0);
        \fill (4) node[below] {$a_4$};
        \coordinate (5) at (10.1,0);
        \fill (5) node[below] {$a_5$};
        \coordinate (6) at (13,0);
        \fill (6) node[below] {$a_5$};
      \end{tikzpicture}
  \caption{We draw two cycles of chords (one solid, one dotted) corresponding to two graph cycles $K,K'\subset \Gamma(z,\alpha)$. Note how these two cycles have even intersection number, as they always should. We compute $\#(\mathcal{C}(K)\cap\mathcal{C}(K'))$ by adding the internal crossings in each cycle.}
  \label{cycle crossings}
    \end{figure}
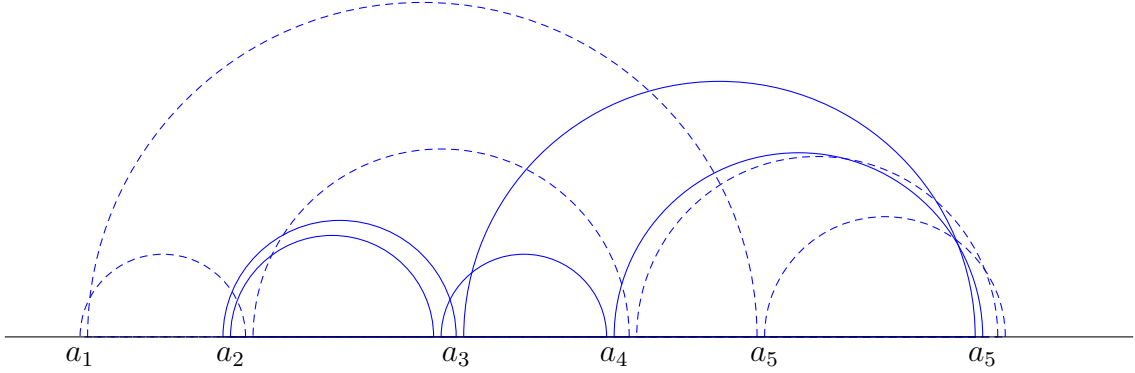
    For any two separate chord cycles $\mathcal{C}(K)$, $\mathcal{C}(K')$, we can smooth out their corners and make them closed embedded loops, which have even intersection number. Therefore, the number of crosses in $\mathcal{C}(z,\alpha)$ is computed by adding the internal number of crosses in each $\mathcal{C}(K)$:
    \[
      \#(\text{crossings in $\mathcal{C}(z,\alpha)$}) = \sum_{K\subset \Gamma(z,\alpha)} \#(\text{crossings in $\mathcal{C}(K)$})\pmod 2.
    \]
    Therefore, it suffices to prove
    \[
      \#(\text{crossings in $\mathcal{C}(K)$}) = \#\{a\to \overrightarrow{b}\to c\ \vert\  a>b\} + \#\{a\to \overleftarrow{b}\to c\ \vert\  a<b\}
    \]
    for any cycle $K\subset\Gamma(z,\alpha)$
    \[
      e_1 \xline{e'_1} e_2 \xline{e'_2} \ldots \xline{e'_3} e_k\xline{e'_k} e_1,
    \]
    with facet cycle oriented as
    \[
      a_1\to a_2\to \ldots\to a_m\to a_1.
    \]
    Now fix a cycle $K$ and view the chords of $\mathcal{C}(K)$ in the upper half-plane (see Figure \ref{intersection figure}).
    \begin{figure}
      \begin{tikzpicture}[scale=1]
        \hpline[$e'_2$]{8.1}{10}
        \hpline[$e'_1$]{3}{8}
        \hpline[$e'_3$]{5}{9.9}
        \draw (0,0) -- (15,0);
        \coordinate (1) at (3,0);
        \fill (1) node[below] {$a_1$};
        \coordinate (2) at (8,0);
        \fill (2) node[below] {$a_2$};
        \coordinate (3) at (10,0);
        \fill (3) node[below] {$a_3$};
        \coordinate (4) at (5,0);
        \fill (4) circle(2pt) node[below] {$a_4$};
        \hplinesty[densely dashed]{5.1}{12}
        \hplinesty[densely dotted]{4.9}{7}
      \end{tikzpicture}
      \caption{We count the quantity $X(k)$ in (\ref{telescope}) for $k=4$. If $e'_4$ was the dashed chord, then $X(4) = 2 + 0 = 0\pmod 2$, since $a_{k+1}>a_k$ but $a_{k-1}\to \overrightarrow{a_{k}}\to a_{k+1}$. If $e'_4$ was the dotted chord, then $X(4) = 1 + 1 = 0\pmod 2$. The dot highlights the $a_k$ endpoint of $e'_{k-1}$.}
      \label{intersection figure}
    \end{figure}
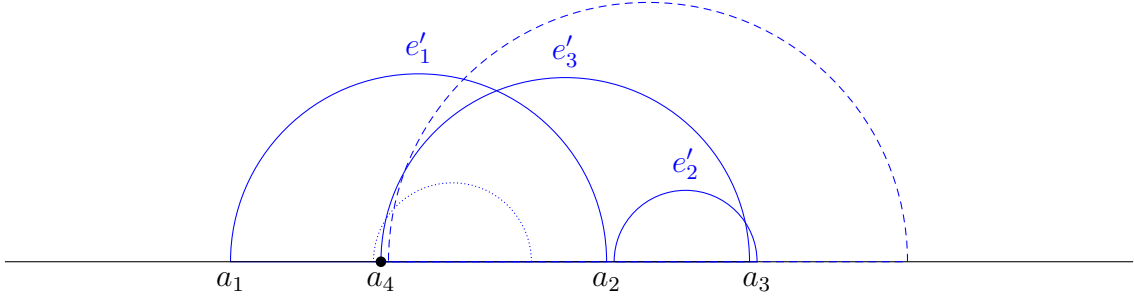
    By reparametrizing our cycle if necesssary, we can assume that the left end of $\mathcal{C}(e'_1)$ lies on $a_1$ and is the leftmost end of any chord in $\mathcal{C}(K)$. Since
    \begin{align*}
      &\#\{a\to \overrightarrow{b}\to c\ \vert\  a>b\} + \#\{a\to \overleftarrow{b}\to c\ \vert\  a<b\} \\
      &= \#\{a\to \overrightarrow{b}\to c\ \vert\  c<b\} + \#\{a\to \overleftarrow{b}\to c\ \vert\  c>b\}\pmod 2,
    \end{align*}
    it suffices to prove
    \begin{equation}\label{crossing identity}
      \#(\text{crossings in $\mathcal{C}(K)$}) =  1 + \#\{a\to \overrightarrow{b}\to c\ \vert\  c<b\} + \#\{a\to \overleftarrow{b}\to c\ \vert\  c>b\}.
    \end{equation}
    To prove this identity, consider the quantity
    \begin{equation}\label{telescope}
      X(k) = \#\{\text{$\mathcal{C}(e'_k)\cap \mathcal{C}(e'_j):1\leq j<k$}\}  + \begin{cases*}
        1 & if $a_{k-1}\to \overrightarrow{a_{k}}\to a_{k+1}$ and $a_{k+1}< a_k$\\
        1 & if $a_{k-1}\to \overleftarrow{a_{k}}\to a_{k+1}$ and $a_{k+1} > a_k$\\
        0  & otherwise,
      \end{cases*}
    \end{equation}
    where in the second term, we define $a_{0} := a_m$, $a_{m+1} := a_{1}$.
    By summing these terms from $1$ to $m$, we obtain all the terms in (\ref{crossing identity}) except the term that is $1$. For all $k$ where $1<k<m$, the quantity $X(k) = 0$. Indeed, if either of the first two conditions in Equation (\ref{telescope}) hold, then the chord $\mathcal{C}(e'_k)$ must have endpoints to the right and left of the $a_k$-endpoint of $\mathcal{C}(e'_{k-1})$. Therefore, the intersection number of $\mathcal{C}(e'_k)$ with the chords $\mathcal{C}(e'_1)\cup\ldots\cup\mathcal{C}(e'_{k-1})$ is $1$. It remains to show that $X(1)+X(m) = 1$. To this end, observe that $X(1) = 0 + 1 = 1$, because $a_m\to \overleftarrow{a_1}\to a_2$ and $a_2>a_1$. Now counting the intersection number of $\mathcal{C}(e'_m)$ with $\mathcal{C}(e'_1)\cup\ldots\cup\mathcal{C}(e'_{m-1})$, we determine 
    \[
      \#\{\text{$\mathcal{C}(e'_m)\cap \mathcal{C}(e'_j):1\leq j<m$}\} =
      \begin{cases*}
        1 & if $a_{m-2}\to \overrightarrow{a_{m-1}}\to a_m$\\
        0 & otherwise.
      \end{cases*}
    \]
    This is equal to the second summand of (\ref{telescope}), as $a_{m+1}=a_1<a_m$. Therefore, $X(m) = 0$.
  \end{proof}
  Before the next proposition, we give a necessary definition.
  \begin{definition}
    Let $y\in X_{n+1}$. We define a total ordering $<$ on the set $\partial(y,\alpha)$ to be the order induced by our total ordering on $\partial^{n+1}$. Then we define
    \[
      \order(s) = \#\{t\in \partial(y,\alpha): t<s\},\qquad \overleftarrow{\order}(s) = \#\{t\in \partial(y,\alpha): t>s\}.
    \]
  \end{definition}
  \begin{proposition}\label{weighted cross counting}
  We have
  \begin{align*}
    &\sum_{a<b<c<d}(a+b+c+d)m_{ac}m_{bd} + \sum_{\substack{s,t\in \partial_c\\s<t}}\sum_{\substack{c< a<b\\a<b<c\\b<c<a}}(a+b)m_{a_c}(s_{\out})m_{b_c}(t_{\out}) \\
    &+ \sum_{\substack{c,\\s\in \partial_c}}\sum_{\substack{c< a<b\\a<b<c}}(a+b)m_{a_c}(s_{\out})m_{b_c}(s_{\out}) = \sum_{\substack{a,\\t\in \partial_a}} a\frac{m(t_{\out})}{2} + \sum_{\substack{c,\\t\in \partial_b}}\sum_{\substack{a,\\s\in \partial_{a_b}(t_{\out},\alpha)}}\overleftarrow{\order(s)}\cdot a\pmod 2
  \end{align*}
\end{proposition}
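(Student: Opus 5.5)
Following Proposition \ref{cross counting}, the plan is to read the left-hand side as a \emph{weighted} count of crossings in the chord presentation $\mathcal{C}(z,\alpha)$. First add the term
\[
\sum_{\substack{c,\\s\in \partial_c}}\sum_{\substack{c<a<b\\a<b<c}}(a+b)m_{a_c}(s_{\out})m_{b_c}(s_{\out})
\]
to itself in a rearranged form. This term is identically $0$ by Lemma \ref{simple a+b}, using the same splitting as in Lemma \ref{extreme c}: $1|_{c<a}+1|_{c<b}+1$ is replaced by $a_c+b_c+a+b$. After this, the left-hand side becomes
\[
\sum_{\{C,C'\}\text{ crossing}} w(C,C'),
\]
where the sum runs over unordered pairs of crossing chords in $\mathcal{C}(z,\alpha)$ and the weight $w$ is a sum of endpoint labels. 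Concretely:
\begin{itemize}
\item a transverse crossing of $C_{ac}$ and $C_{bd}$ with $a<b<c<d$ gets weight $a+b+c+d$;
\item a crossing produced by the perturbation at a common endpoint $c$ gets weight $a+b$, the sum of the two far endpoints.
\end{itemize}
In every case the weight is the sum of the labels of the endpoints of the two chords that are \emph{not} shared, modulo the shared label. Since the shared label contributes $2c\equiv 0$, I will write $w(C,C')\equiv \sum(\text{all endpoint labels of }C\text{ and }C')$. This uniform description is the key reorganization.

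Next, distribute the weight onto individual chord ends:
\[
\sum_{\{C,C'\}}w(C,C')=\sum_{\text{chord ends }e}\operatorname{label}(e)\cdot\#\{C'\text{ crossing the chord of }e\}.
\]
Grouping chord ends by the index $a$ and the element $t\in\partial_a$ they meet, this becomes $\sum_a a\sum_{t\in\partial_a}\sum_{s}\chi(s)$, where $s$ runs over the chords meeting $t$ (the elements of $\partial(t_{\out},\alpha)$) and $\chi(s)$ is the number of chords crossing $s$. I then reduce $\chi(s)$ mod $2$ using the parity argument of Lemma \ref{ab+cd lemma}. The number of chords separating the two ends of a chord equals, mod $2$, the number of chord ends lying on one side of it. Because each $s'\in\partial(t_{\out},\alpha)$ gives a chord whose end sits in the small arc for $t$, and these ends are ordered by the induced order on $\partial^{n+1}$ (property (P-2)), the local contribution near the $a$-endpoint is $\overleftarrow{\order}(s)$. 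The global contribution, chords crossing $s$ away from the arc of $t$, should telescope using evenness at every index and every $t$ (Lemma \ref{even chords}). It should leave only the contribution from the \emph{other} endpoint of $s$ together with a correction $m(t_{\out})/2$ per block.

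The main obstacle is this last bookkeeping step. I must show that, after summing $a\cdot\chi(s)$ over both ends of every chord, the far-endpoint contributions cancel mod $2$ against each other. What should remain is exactly $\sum_a a\sum_{t\in\partial_a}\bigl(m(t_{\out})/2+\sum_{s}\overleftarrow{\order}(s)\bigr)$. The $\binom{m}{2}$-type term $m(t_{\out})/2$ comes from $\sum_{s}\overleftarrow{\order}(s)$ versus $\sum_s\order(s)$ differing by $\binom{m}{2}$-parity, i.e. $\sum_{k<m}k\equiv m/2$ for even $m$. I would first check this on a single block, with all chords at one $t$ going to a single other index, then extend by linearity over blocks. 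If necessary I would absorb leftover index-dependent terms using Lemma \ref{fa+fb} and Lemma \ref{one intersection}. Finally, I would rewrite the indexing in terms of $a_b$ and $t\in\partial_b$ to match the right-hand side as stated.
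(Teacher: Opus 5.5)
Your framework is the paper's: read the left side as a sum over crossing pairs of their endpoint labels, then move the weight onto chord ends. The step you flag as the main obstacle is set up wrongly, though, and as planned it cannot reach the stated right-hand side. The far-endpoint contributions do not cancel; they \emph{are} the second term on the right.

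For a chord $C$ with end $e$ at index $a$ and end $e'$ at index $b$, each block $\partial(t_{\out},\alpha)$ is even and occupies its own arc. So the number of chords crossing $C$ is $\equiv\order(e)+\overleftarrow{\order}(e')\pmod 2$ directly, with no telescoping. This is symmetric in which end is leftmost, since $\order+\overleftarrow{\order}=m-1$ is odd. Multiply by $a$ and sum over chords meeting $a$:
\begin{itemize}
\item the near-end part gives $\sum_a a\sum_{t\in\partial_a}\binom{m(t_{\out})}{2}\equiv\sum_a a\sum_t m(t_{\out})/2$;
\item the far-end part $\sum_a a\sum_{C\ni a}\overleftarrow{\order}(e'(C))$, regrouped by the block $t\in\partial_b$ containing $e'$, is exactly the second right-hand term.
\end{itemize}
The point is that the weight on $\overleftarrow{\order}(s)$ is the label of the \emph{far} end of $s$, not the index of the block containing $s$.

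Your target $\sum_a a\sum_{t\in\partial_a}\bigl(m(t_{\out})/2+\sum_s\overleftarrow{\order}(s)\bigr)$ puts the weight on the block's own index. Since $\sum_{s\in\partial(t_{\out},\alpha)}\overleftarrow{\order}(s)=\sum_s\order(s)=\binom{m(t_{\out})}{2}\equiv m(t_{\out})/2$, your target is identically $0$, and no reindexing turns it into the stated right side. (Note also that these two sums are equal; they do not differ, contrary to your explanation of the $m(t_{\out})/2$ term.)

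For a concrete check, take $z\in X_3$ with one block at each of the indices $0,1,3$ and chords $\{0,1\},\{0,3\},\{1,3\}$. This is realizable with $X_0=\emptyset$. The crossings are $C_{01}\cap C_{03}$ and $C_{03}\cap C_{13}$, with weights $4$ and $7$. Both sides of the proposition equal $1$, while your target is $0$.

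Separately, drop your first step. The left-hand side as written is already the full weighted crossing count:
\begin{itemize}
\item the first sum covers crossings with four distinct indices;
\item the second covers chords in distinct blocks $s<t$ at a shared index $c$, which cross exactly when $c<a<b$, $a<b<c$ or $b<c<a$;
\item the third covers chords in the same block, which are ordered by far index and cross exactly when $c<a<b$ or $a<b<c$.
\end{itemize}
The third sum is not zero; it equals $1$ in the example above. Your indicator replacement is off by one, since $1|_{c<a}+1|_{c<b}\equiv a+a_c+b+b_c$. The proposition is a pointwise identity mod $2$, so vanishing mod coboundary, as in Lemma \ref{extreme c}, would not be available anyway. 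Finally, adding a term to itself only deletes it mod $2$.
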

\begin{proof}
  We give an interpretation of the quantity the left hand side computes: Summing over each pair of crossing chords $C,C'\in \mathcal{C}(z,\alpha)$ with ends $a,c$ and $b,d$, we add $a+b+c+d$. The first summand on the left hand side of accounts for the crossings $C\cap C'$ where the ends of $C,C'$ lie in four distinct indices. The next two summands account for the crossings where the ends of $C,C'$ only share one index. Finally, there is no summand accounting for crossings where $C,C'$ share identically indexed ends, since such a contibution is $0$ mod 2 anyway.\par
  We count this weighted sum of crossings in a different way to derive the right side. For each $a$ and each chord $C_a$ with an end in $a$, we traverse along $C_a$ and add the quantity $a$ for each chord that $C$ crosses. Summing over all $a$ and all $C_a$, we obtain
  \[
    \sum_a\sum_{\substack{\text{chords }C\\ \text{meeting }a}}\#\{C':C'\text{ crosses }C\}\cdot a,
  \]
  which, by our discussion, still equals the left hand side. Now we prove this quantity is equal to the right hand side of the above equation. It suffices to fix $a$ and prove
  \begin{equation}\label{crossing count}
     \sum_{\substack{\text{chords }C\\ \text{meeting }a}}\#\{C':C'\text{ crosses }C\} = \sum_{t\in \partial_a} \frac{m(t_{\out})}{2} + \sum_{\substack{b,\\t\in \partial_b}}\sum_{s\in \partial_{a_b}(t_{\out},\alpha)}\overleftarrow{\order}(s)\pmod 2.
   \end{equation}
   Fix a chord $C$ meeting $a$ and $b$ (see Figure \ref{crossing count fig} for a picture). Now we compute the intersection number in (\ref{crossing count}). Denote the ends of $C$ by $s\circ t\in \partial_{b_a}\circ \partial_a$ and $s'\circ t'\in \partial_{a_b}\circ \partial_b$. The intersection number is, mod 2, the sum of the following two quantities:
   \begin{enumerate}[label = (c-\arabic*), ref = c-\arabic*]
   \item The number of chord ends that are to the left of $s\circ t$.\label{s}
   \item The number of chord ends that are to the right of $s'\circ t'$.\label{s'}
   \end{enumerate}
   By Lemma \ref{even chords}, (\ref{s}) $= \order(s)$, and (\ref{s'}) $= \overleftarrow{\order}(s')$
       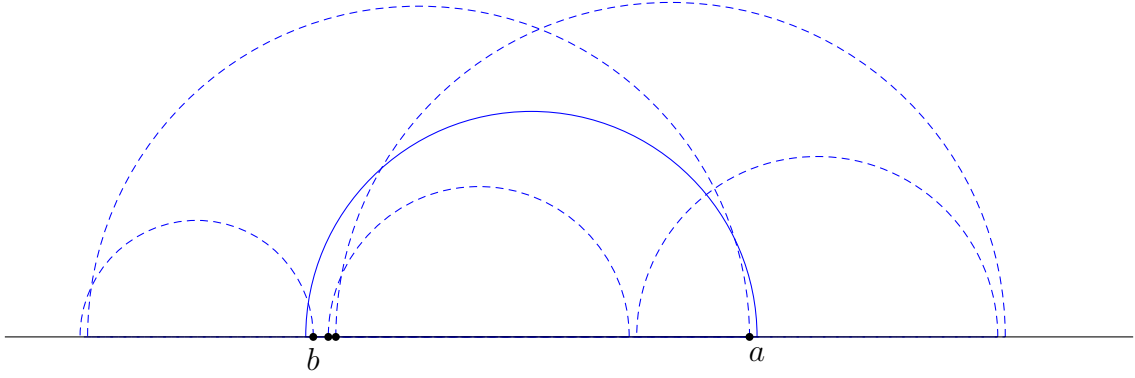
\begin{figure}
      \begin{tikzpicture}[scale=1]
        \hpline{4}{10}
        \hplinesty[densely dashed]{1.0}{4.1}
        \hplinesty[densely dashed]{4.3}{8.3}
        \hplinesty[densely dashed]{8.4}{13.2}
        \hplinesty[densely dashed]{1.1}{9.9}
        \hplinesty[densely dashed]{4.4}{13.3}
        \draw (0,0) -- (15,0);
        \coordinate (A) at (10,0);
        \fill (A)  node[below] {$a$};
        \coordinate (B) at (4.1,0);
        \fill (B)  node[below] {$b$};
        \coordinate (0) at (4.1,0);
        \fill (0) circle (1.5pt) node {};
        \coordinate (1) at (4.3,0);
        \fill (1) circle (1.5pt) node {};
        \coordinate (2) at (4.4,0);
        \fill (2) circle (1.5pt) node {};
        \coordinate (3) at (9.9,0);
        \fill (3) circle (1.5pt) node {};
      \end{tikzpicture}

  \caption{The black dots on $\partial \mathbb{D}$ count the sum (mod $2$) of the number of intersections with the solid chord $C$, that is, the $C$ summand on the left hand side of (\ref{crossing count}). By the notation of the proof of Proposition \ref{weighted cross counting}, the number of dots is $\order(s) + \overleftarrow{\order}(s')$.}
  \label{crossing count fig}
\end{figure}
Summing over all chords $C$ meeting $a$, we find that (\ref{s'}) sums to
   \[
     \sum_{t'\in \partial_a}(0 + 1 + \ldots + (m(t'_{\out})-1)) = \sum_{t'\in \partial_a}\frac{m(t'_{\out})}{2}
   \]
   (the identity holds since $m(t'_{\out})$ is even), and (\ref{s}) sums to
   \[
     \sum_{\substack{b,\\t\in \partial_b}}\sum_{s\in \partial_{a_b}(t_{\out},\alpha)}\overleftarrow{\order}(s)\pmod 2.\qedhere
   \]
   
\end{proof}
\printglossaries
\bibliography{Bibliography} 

@article {MR3230817,
    AUTHOR = {Lipshitz, Robert and Sarkar, Sucharit},
     TITLE = {A {K}hovanov stable homotopy type},
   JOURNAL = {J. Amer. Math. Soc.},
  FJOURNAL = {Journal of the American Mathematical Society},
    VOLUME = {27},
      YEAR = {2014},
    NUMBER = {4},
     PAGES = {983--1042},
      ISSN = {0894-0347,1088-6834},
   MRCLASS = {57M25 (55P42)},
  MRNUMBER = {3230817},
MRREVIEWER = {Nikolai\ N.\ Saveliev},
       DOI = {10.1090/S0894-0347-2014-00785-2},
       URL = {https://doi.org/10.1090/S0894-0347-2014-00785-2},
}

@article {MR4153651,
    AUTHOR = {Lawson, Tyler and Lipshitz, Robert and Sarkar, Sucharit},
     TITLE = {Khovanov homotopy type, {B}urnside category and products},
   JOURNAL = {Geom. Topol.},
  FJOURNAL = {Geometry \& Topology},
    VOLUME = {24},
      YEAR = {2020},
    NUMBER = {2},
     PAGES = {623--745},
      ISSN = {1465-3060,1364-0380},
   MRCLASS = {55P42 (57K18)},
  MRNUMBER = {4153651},
MRREVIEWER = {Gerd\ Laures},
       DOI = {10.2140/gt.2020.24.623},
       URL = {https://doi.org/10.2140/gt.2020.24.623},
}

@incollection {MR3611723,
    AUTHOR = {Lawson, Tyler and Lipshitz, Robert and Sarkar, Sucharit},
     TITLE = {The cube and the {B}urnside category},
 BOOKTITLE = {Categorification in geometry, topology, and physics},
    SERIES = {Contemp. Math.},
    VOLUME = {684},
     PAGES = {63--85},
 PUBLISHER = {Amer. Math. Soc., Providence, RI},
      YEAR = {2017},
      ISBN = {978-1-4704-2821-1},
   MRCLASS = {57M27 (18D05)},
  MRNUMBER = {3611723},
MRREVIEWER = {Stefan\ K.\ Friedl},
}

@article {MR2250492,
    AUTHOR = {Plamenevskaya, Olga},
     TITLE = {Transverse knots and {K}hovanov homology},
   JOURNAL = {Math. Res. Lett.},
  FJOURNAL = {Mathematical Research Letters},
    VOLUME = {13},
      YEAR = {2006},
    NUMBER = {4},
     PAGES = {571--586},
      ISSN = {1073-2780},
   MRCLASS = {57R17 (57M25 57M27)},
  MRNUMBER = {2250492},
MRREVIEWER = {Scott\ Morrison},
       DOI = {10.4310/MRL.2006.v13.n4.a7},
       URL = {https://doi.org/10.4310/MRL.2006.v13.n4.a7},
}

@article {MR2186113,
    AUTHOR = {Ng, Lenhard},
     TITLE = {A {L}egendrian {T}hurston-{B}ennequin bound from {K}hovanov
              homology},
   JOURNAL = {Algebr. Geom. Topol.},
  FJOURNAL = {Algebraic \& Geometric Topology},
    VOLUME = {5},
      YEAR = {2005},
     PAGES = {1637--1653},
      ISSN = {1472-2747,1472-2739},
   MRCLASS = {57M27 (53D12 57R17)},
  MRNUMBER = {2186113},
MRREVIEWER = {J\'er\^ome\ Petit},
       DOI = {10.2140/agt.2005.5.1637},
       URL = {https://doi.org/10.2140/agt.2005.5.1637},
}

@article {MR22071,
    AUTHOR = {Steenrod, N. E.},
     TITLE = {Products of cocycles and extensions of mappings},
   JOURNAL = {Ann. of Math. (2)},
  FJOURNAL = {Annals of Mathematics. Second Series},
    VOLUME = {48},
      YEAR = {1947},
     PAGES = {290--320},
      ISSN = {0003-486X},
   MRCLASS = {56.0X},
  MRNUMBER = {22071},
MRREVIEWER = {B.\ Eckmann},
       DOI = {10.2307/1969172},
       URL = {https://doi.org/10.2307/1969172},
}

@article {MR3252965,
    AUTHOR = {Lipshitz, Robert and Sarkar, Sucharit},
     TITLE = {A {S}teenrod square on {K}hovanov homology},
   JOURNAL = {J. Topol.},
  FJOURNAL = {Journal of Topology},
    VOLUME = {7},
      YEAR = {2014},
    NUMBER = {3},
     PAGES = {817--848},
      ISSN = {1753-8416,1753-8424},
   MRCLASS = {57M25 (55P42 55S10)},
  MRNUMBER = {3252965},
MRREVIEWER = {Nikolai\ N.\ Saveliev},
       DOI = {10.1112/jtopol/jtu005},
       URL = {https://doi.org/10.1112/jtopol/jtu005},
}

@article {MR4777698,
    AUTHOR = {Stoffregen, Matthew and Zhang, Melissa},
     TITLE = {Localization in {K}hovanov homology},
   JOURNAL = {Geom. Topol.},
  FJOURNAL = {Geometry \& Topology},
    VOLUME = {28},
      YEAR = {2024},
    NUMBER = {4},
     PAGES = {1501--1585},
      ISSN = {1465-3060,1364-0380},
   MRCLASS = {57K18 (55P91)},
  MRNUMBER = {4777698},
MRREVIEWER = {William\ Rushworth},
       DOI = {10.2140/gt.2024.28.1501},
       URL = {https://doi.org/10.2140/gt.2024.28.1501},
}

@article {MR3189434,
    AUTHOR = {Lipshitz, Robert and Sarkar, Sucharit},
     TITLE = {A refinement of {R}asmussen's {$S$}-invariant},
   JOURNAL = {Duke Math. J.},
  FJOURNAL = {Duke Mathematical Journal},
    VOLUME = {163},
      YEAR = {2014},
    NUMBER = {5},
     PAGES = {923--952},
      ISSN = {0012-7094,1547-7398},
   MRCLASS = {57M25 (55P42)},
  MRNUMBER = {3189434},
MRREVIEWER = {Laurence\ R.\ Taylor},
       DOI = {10.1215/00127094-2644466},
       URL = {https://doi.org/10.1215/00127094-2644466},
}

@unpublished{rajapakse2025,
      title={On {S}teenrod squares for even and odd {K}hovanov homology}, 
      author={Advika Rajapakse},
      year={2025},
      eprint={2509.03396},
      archivePrefix={arXiv},
      primaryClass={math.GT},
      url={https://arxiv.org/abs/2509.03396},
      note={(Preprint)},
}

@article {MR4165986,
    AUTHOR = {Lobb, Andrew and Orson, Patrick and Sch\"utz, Dirk},
     TITLE = {Khovanov homotopy calculations using flow category calculus},
   JOURNAL = {Exp. Math.},
  FJOURNAL = {Experimental Mathematics},
    VOLUME = {29},
      YEAR = {2020},
    NUMBER = {4},
     PAGES = {475--500},
      ISSN = {1058-6458,1944-950X},
   MRCLASS = {57K18},
  MRNUMBER = {4165986},
MRREVIEWER = {Nikolai\ N.\ Saveliev},
       DOI = {10.1080/10586458.2018.1482805},
       URL = {https://doi.org/10.1080/10586458.2018.1482805},
}

@article {MR4473678,
    AUTHOR = {Medina-Mardones, Anibal M.},
     TITLE = {New formulas for cup-{$i$} products and fast computation of
              {S}teenrod squares},
   JOURNAL = {Comput. Geom.},
  FJOURNAL = {Computational Geometry. Theory and Applications},
    VOLUME = {109},
      YEAR = {2023},
     PAGES = {Paper No. 101921, 16},
      ISSN = {0925-7721,1879-081X},
   MRCLASS = {55S10 (68U05)},
  MRNUMBER = {4473678},
MRREVIEWER = {Prasit\ Bhattacharya},
       DOI = {10.1016/j.comgeo.2022.101921},
       URL = {https://doi.org/10.1016/j.comgeo.2022.101921},
}

@article {MR1740682,
    AUTHOR = {Khovanov, Mikhail},
     TITLE = {A categorification of the {J}ones polynomial},
   JOURNAL = {Duke Math. J.},
  FJOURNAL = {Duke Mathematical Journal},
    VOLUME = {101},
      YEAR = {2000},
    NUMBER = {3},
     PAGES = {359--426},
      ISSN = {0012-7094,1547-7398},
   MRCLASS = {57M27 (57R56)},
  MRNUMBER = {1740682},
       DOI = {10.1215/S0012-7094-00-10131-7},
       URL = {https://doi.org/10.1215/S0012-7094-00-10131-7},
}

@article {MR1917056,
    AUTHOR = {Bar-Natan, Dror},
     TITLE = {On {K}hovanov's categorification of the {J}ones polynomial},
   JOURNAL = {Algebr. Geom. Topol.},
  FJOURNAL = {Algebraic \& Geometric Topology},
    VOLUME = {2},
      YEAR = {2002},
     PAGES = {337--370},
      ISSN = {1472-2747,1472-2739},
   MRCLASS = {57M27},
  MRNUMBER = {1917056},
MRREVIEWER = {Jacob\ Andrew\ Rasmussen},
       DOI = {10.2140/agt.2002.2.337},
       URL = {https://doi.org/10.2140/agt.2002.2.337},
}

@article {MR4076631,
    AUTHOR = {Piccirillo, Lisa},
     TITLE = {The {C}onway knot is not slice},
   JOURNAL = {Ann. of Math. (2)},
  FJOURNAL = {Annals of Mathematics. Second Series},
    VOLUME = {191},
      YEAR = {2020},
    NUMBER = {2},
     PAGES = {581--591},
      ISSN = {0003-486X,1939-8980},
   MRCLASS = {57K10 (57R65)},
  MRNUMBER = {4076631},
MRREVIEWER = {Laurence\ R.\ Taylor},
       DOI = {10.4007/annals.2020.191.2.5},
       URL = {https://doi.org/10.4007/annals.2020.191.2.5},
}

@incollection {MR4772951,
    AUTHOR = {Lipshitz, Robert and Sarkar, Sucharit},
     TITLE = {Khovanov homology of strongly invertible knots and their
              quotients},
 BOOKTITLE = {Frontiers in geometry and topology},
    SERIES = {Proc. Sympos. Pure Math.},
    VOLUME = {109},
     PAGES = {157--182},
 PUBLISHER = {Amer. Math. Soc., Providence, RI},
      YEAR = {[2024] \copyright 2024},
      ISBN = {[9781470470876]; [9781470477585]},
   MRCLASS = {57K18 (55N91 55P42 57K45)},
  MRNUMBER = {4772951},
MRREVIEWER = {William\ Rushworth},
}

@article {MR4889247,
    AUTHOR = {Baldwin, John A. and Hu, Ying and Sivek, Steven},
     TITLE = {Khovanov homology and the cinquefoil},
   JOURNAL = {J. Eur. Math. Soc. (JEMS)},
  FJOURNAL = {Journal of the European Mathematical Society (JEMS)},
    VOLUME = {27},
      YEAR = {2025},
    NUMBER = {6},
     PAGES = {2443--2465},
      ISSN = {1435-9855,1435-9863},
   MRCLASS = {57K18 (57K10 57K20 57K33)},
  MRNUMBER = {4889247},
       DOI = {10.4171/jems/1415},
       URL = {https://doi.org/10.4171/jems/1415},
}

@article {MoranHigherSquares,
    AUTHOR = {Cantero Mor\'an, Federico},
     TITLE = {Higher {S}teenrod squares for {K}hovanov homology},
   JOURNAL = {Adv. Math.},
  FJOURNAL = {Advances in Mathematics},
    VOLUME = {369},
      YEAR = {2020},
     PAGES = {107153, 79},
      ISSN = {0001-8708,1090-2082},
   MRCLASS = {55P42 (18N50 55S10 57K18)},
  MRNUMBER = {4094757},
MRREVIEWER = {Nikolai\ N.\ Saveliev},
       DOI = {10.1016/j.aim.2020.107153},
       URL = {https://doi.org/10.1016/j.aim.2020.107153},
}

@article {MR2729272,
    AUTHOR = {Rasmussen, Jacob},
     TITLE = {Khovanov homology and the slice genus},
   JOURNAL = {Invent. Math.},
  FJOURNAL = {Inventiones Mathematicae},
    VOLUME = {182},
      YEAR = {2010},
    NUMBER = {2},
     PAGES = {419--447},
      ISSN = {0020-9910,1432-1297},
   MRCLASS = {57M27},
  MRNUMBER = {2729272},
MRREVIEWER = {William\ D.\ Gillam},
       DOI = {10.1007/s00222-010-0275-6},
       URL = {https://doi.org/10.1007/s00222-010-0275-6},
}
\bibliographystyle{alpha}
\end{document}